\newtheorem{theorem}{Theorem}[section]
\newtheorem{lemma}[theorem]{Lemma}
\newtheorem{proposition}[theorem]{Proposition}
\newtheorem{corollary}[theorem]{Corollary}
\newtheorem{conjecture}[theorem]{Conjecture}
\newtheorem*{conjecture*}{Conjecture}
\newtheorem*{theorem*}{Theorem}
\newtheorem{definition}[theorem]{Definition}
\newtheorem{question}[theorem]{Question}
\newtheorem*{question*}{Question}
\theoremstyle{definition}
\newtheorem{example}{Example}
\newtheorem{remark}[theorem]{Remark}
\def\e{\epsilon}
\def\a{\alpha}
\def\g{\gamma}
\def\d{\delta}
\def\ce{\mathcal{E}}
\def\cf{\mathcal{F}}
\def\cg{\mathcal{G}}
\def\cc{\mathcal{C}}
\def\cd{\mathcal{D}}
\def\N{\mathbb{N}}
\def\Z{\mathbb{Z}}
\def\E{\mathop{\mathbb{E}}}
\def\P{\mathbb{P}}
\def\C{\mathbb{C}}
\def\F{\mathbb{F}}
\def\so3{\mathrm{SO}(3)}
\def\fso3{\mathrm{FSO}(3)}
\DeclareMathOperator{\rk}{rk}
\title{Low-complexity approximations for sets defined by generalizations of affine conditions}
\author{W. T. Gowers\footnote{Coll\`ege de France and Department of Pure Mathematics and Mathematical Statistics, University of Cambridge. Email: \texttt{wtg10@dpmms.cam.ac.uk}.} \and Thomas Karam\footnote{Mathematical Institute, University of Oxford. This research was essentially carried out when that author was at the Department of Pure Mathematics and Mathematical Sciences, University of Cambridge. Email: \texttt{thomas.karam@maths.ox.ac.uk}.}}
\begin{document}
\maketitle

\begin{abstract}
Let $p$ be a prime, let $S$ be a non-empty subset of $\F_p$ and let $0<\epsilon\leq 1$. We show that there exists a constant $C=C(p, \epsilon)$ such that for every positive integer $k$, whenever $\phi_1, \dots, \phi_k: \F_p^n \rightarrow \F_p$ are linear forms and $E_1, \dots, E_k$ are subsets of $\F_p$, there exist linear forms $\psi_1, \dots, \psi_C: \F_p^n \rightarrow \F_p$ and subsets $F_1, \dots, F_C$ of $\F_p$ such that the set $U=\{x \in S^n: \psi_1(x) \in F_1, \dots, \psi_C(x) \in F_C\}$ is contained inside the set $V=\{x \in S^n: \phi_1(x) \in E_1, \dots, \phi_k(x) \in E_k\}$, and the difference $V \setminus U$ has density at most $\epsilon$ inside $S^n$. We then generalize this result to one where $\phi_1, \dots, \phi_k$ are replaced by homomorphisms $G^n \to H$ for some pair of finite Abelian groups $G$ and $H$, and to another where they are replaced by polynomial maps $\F_p^n \to \F_p$ of small degree.

 \end{abstract}

\tableofcontents

\section{Introduction}\label{Section: Introduction}

Let $p$ be a prime. If $\phi_i(x) = b_i$, $i=1, \dots, k$ is a system of equations over some finite-dimensional vector space over $\F_p$, then the solution set is either empty or an affine subspace with codimension equal to $\dim \langle \phi_1, \dots, \phi_k \rangle$, and therefore with density equal to
\[p^{-\dim \langle \phi_1, \dots, \phi_k \rangle}.\] 
It follows that if the density of the set of solutions is at least $\d$, then there is a set of at most $\log_p\d^{-1}$ equations with the same solution set.

In this paper we shall generalize this simple observation in a number of ways. Our initial motivation for doing this was that in the context of results such as the density Hales-Jewett theorem \cite{Furstenberg Katznelson, polymath} certain generalizations of affine subspaces occur naturally. Consider, for instance, the problem of understanding triples of dense subsets $A, B$ and $C$ of $\{0,1,2\}^n$ such that the set $A\times B\times C$ contains no combinatorial line. (One might wish to do this with a view to obtaining a density increment argument.) One large source of examples is constructed as follows. Pick a prime $p$ and three subsets $U,V$ and $W$ of $\F_p$ such that $U\times V\times W$ contains no arithmetic progression (even degenerate), let $\phi:\{0,1,2\}\to\F_p$ be the function where $\phi(x)$ is the mod-$p$ sum of the coordinates of $x$, and let $A=\phi^{-1}(U), B=\phi^{-1}(V)$ and $C=\phi^{-1}(W)$. Since $\phi$ takes combinatorial lines to arithmetic progressions in $\Z$, and therefore in $\F_p$, it follows that $A\times B\times C$ has the desired property. 

The above construction can be generalized further, and led us to consider sets of the following form. Let $G$ be a finite Abelian group, let $S$ be a subset of $G$, let $\phi_1,\dots,\phi_k:G^n\to G$ be homomorphisms, and let $E_1,\dots,E_k$ be subsets of $G$. Then we take the subset
\[\{x\in G^n:\phi_i(x)\in E_i, i=1,2,\dots,k\},\]
which is subspace-like, in that it is the intersection of sets defined by linear-like conditions. One of our main results will be that, as in the simple case where $G=\F_p$ and $E_1,\dots,E_k$ are singletons, if the above subset is dense, then we may replace it by a subset defined by a number of conditions that is bounded above in terms of the density. However, the new subset is not identical to the old one: in this set-up, the best one can hope for is a good approximation, and that is what we shall obtain. The precise statement will be given later. 

When $G=\F_p$, we also prove a generalization in a different direction, considering subsets of $S^n$ that are defined by low-degree polynomial conditions on $\F_p^n$. Again we shall show that if such a subset is dense, then it can be approximated by a subset of bounded complexity, meaning that it is defined using a bounded number of low-degree polynomial conditions. Again, we defer the precise statement till later.

The following examples indicate some of the complexities that arise when we restrict linear conditions to subsets of the form $S^n$. For brevity, if $\phi:G^n\to G$ and $E\subset G$, we write $(\phi,E)$ for the condition $\phi(x)\in E$. 

\begin{example} \label{First example}
For $i=2,3,\dots,k$, let $\phi_i:\{0,1\}^n\to\F_p$ be the linear form defined by $\phi_i(x)=x_1+x_i$ and let $E_i\subset\F_p$ be the set $\{0,1\}$. Then the set of points of $\{0,1\}^n$ satisfying the conditions $(\phi_i,E_i)$ consists of all $x\in\{0,1\}^n$ such that either $x_1=0$, or $x_1=1$ and $x_2=\dots=x_k=0$. The conditions are linearly independent, and the density of this set is $1/2+1/2^k$. Given $\e>0$, either $k\leq\log_2(\e^{-1})$ or we can approximate the set to within density $\e$ by the set of points satisfying the single condition $x_1=0$. 
\end{example} 

\begin{example} \label{Second example}
For $i=2,3,\dots,k$, let $\phi_i:\Z_p^n \to \Z_p^2$ be the group homomorphism defined by $\phi_i(x)=(x_1,x_i)$ and let $E_i\subset\Z_p^2$ be the set $(\{0\} \times \Z_p) \cup \{(1,1)\}$. Then the set of points of $\Z_p^n$ satisfying the conditions $(\phi_i,E_i)$ consists of all $x\in \Z_p^n$ such that either $x_1=0$, or $x_1=x_2=\dots=x_k=1$. The density of this set is $1/p+1/p^k$, so it can be approximated in a similar way.
\end{example}  

\begin{example} \label{Third example}
For $i=2,3,\dots,k$, let $P:\F_p^n \to \F_p$ be the degree-$2$ polynomial defined by $P(x)=x_1x_i$ and let $E_i\subset\F_p$ be the set $\{0\}$. Then the set of points of $\F_p^n$ satisfying the conditions $(\phi_i,E_i)$ consists of all $x\in \F_p^n$ such that either $x_1=0$, or $x_1 \neq 0$ and $x_2=\dots=x_k=0$. The density of this set is $1/p+ (p-1)/p^k$, so again it can be approximated in a similar way.
\end{example}  

Note that in each of the three examples, the low-complexity set that approximates the solution set -- that is, the set of points that satisfy the conditions -- is a subset of the solution set. Our main results will show that this is a general phenomenon. To formulate them it will be useful to have a few definitions.

\begin{definition}
Let $X$ be a finite set and let $U,V$ be subsets of $X$. Then $U$ \emph{internally $\e$-approximates} $V$ if $U\subset V$ and $|V\setminus U|$ has density at most $\e$ in $X$. Given a set $Z$, a function $f:X\to Z$, and a subset $E\subset Z$, we call the pair $(f,E)$ a \emph{condition} on $X$, and we say that $x$ \emph{satisfies the condition} $(f,E)$ if $f(x)\in E$. If $\cf=\{(f_1,E_1),\dots,(f_k,E_k)\}$ is a set of conditions on $X$, the \emph{satisfying set} of the conditions of $\cf$ is the set of $x\in X$ that satisfy all the conditions in $\cf$. If $\cf$ and $\cg$ are sets of conditions on $X$, we say that $\cg$ \emph{internally $\e$-approximates} $\cf$ if the satisfying set of $\cg$ internally $\e$-approximates the satisfying set of $\cf$. If $S\subset\F_p$, then a \emph{mod-$p$ condition} on $S^n$ is a condition $(\phi,E)$ on $S^n$, where $\phi$ is the restriction to $S^n$ of a linear form defined on $\F_p^n$ and $E$ is a subset of $\F_p$. 
\end{definition}

Our first main result is the following theorem. 
\begin{theorem}\label{main approximation theorem}
For every $\e>0$, every prime $p$ and every non-empty $S \subset \F_p$ there exists $K$ such that for every $n$, every set of mod-$p$ conditions on $S^n$ can be internally $\e$-approximated by a set of at most $K$ mod-$p$ conditions. 
\end{theorem}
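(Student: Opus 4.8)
The plan is to induct on the number $k$ of conditions, extracting at each stage a bounded amount of "useful" information and discarding the rest, with the key quantitative control coming from a density-increment / energy-increment argument rather than from naively keeping conditions. Let me set up the right potential function. Given a set of mod-$p$ conditions $\cf$ on $S^n$ with satisfying set $V$, and a candidate sub-collection $\cg \subseteq \cf$ with satisfying set $W \supseteq V$, we want either that $W$ already internally $\e$-approximates $V$ (i.e.\ $|W \setminus V| \le \e |S^n|$, noting $W \supseteq V$ so $W$ automatically internally approximates $V$ once the density gap is small), or that we can find one more condition to add to $\cg$ that decreases the density of $W$ by a definite multiplicative factor. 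Since every satisfying set of mod-$p$ conditions on $S^n$ has density that is bounded below by a positive function of $p$, $|S|$ and the number of conditions involved — this is the subtlety the examples flag — the iteration cannot run forever, and the number of steps is bounded in terms of $p$, $|S|$ and $\e$ alone.

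First I would make precise the "density is bounded below" phenomenon. If $\phi_1, \dots, \phi_m$ are linear forms on $\F_p^n$ restricted to $S^n$ and $E_1, \dots, E_m \subseteq \F_p$, then the satisfying set inside $S^n$, if non-empty, has density at least $c(p, |S|)^m$ for a suitable constant $c(p,|S|) \in (0,1]$: indeed one can peel off the forms one at a time, and restricting to a fibre of a single linear form on $S^n$ costs at most a factor depending only on $p$ and $|S|$ (the worst case being when the form is a single coordinate and $|S| \ge 2$, but in general one argues via the distribution of $\phi(x)$ for $x \in S^n$, which by a tensor-power / Fourier argument is either constant or roughly equidistributed, and in the constant case the condition is trivial to absorb). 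Call this Lemma A. This is exactly what prevents the bad behaviour of Examples 1--3 from recurring indefinitely: each time we genuinely shrink $W$, we burn a multiplicative factor of density, and there is only so much density to burn.

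Next comes the increment step, which I expect to be the main obstacle. Suppose $\cg \subseteq \cf$ has satisfying set $W$ with $|W \setminus V| > \e|S^n|$. I want to find a condition $(\phi_j, E_j) \in \cf \setminus \cg$ (or a boundedly-modified version of it) such that adjoining it to $\cg$ cuts $|W|$ down by a factor bounded away from $1$ in terms of $\e, p, |S|$. The natural approach: since $W \setminus V$ is large, some condition in $\cf \setminus \cg$ is violated on a set of density $\ge \e/k$ within $W$ — but $\e/k$ is not good enough on its own, so a pure pigeonhole on $k$ fails and we need to be cleverer. The fix is to work with the \emph{joint distribution}: consider the map $\Phi = (\phi_j)_{j : (\phi_j,E_j) \in \cf\setminus\cg}$ from $S^n$ to $\F_p^{(\text{number of remaining forms})}$, quotient by the subspace of linear dependencies so that the image lives in a space whose dimension is at most the rank $r = \rk(\cf \setminus \cg)$, and observe that on $S^n$ this pushforward is, up to a bounded-complexity correction, controlled by the forms involved. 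If $r$ is bounded (by something like $\log_{c(p,|S|)} \e$, via Lemma A applied to $W$), then $\cf \setminus \cg$ only sees a bounded-dimensional quotient and we can simply keep a bounded basis of those forms, together with the appropriate pulled-back target sets, and be done. If $r$ is large, then I claim the satisfying set of $\cf$ restricted to $W$ must have density much smaller than that of $W$ — again by Lemma A, the extra rank forces extra density loss — so in fact $W \setminus V$ is almost all of $W$, and then \emph{any} single violated condition with violation probability bounded below does the job; such a condition exists because the average over remaining conditions of the violation probability is $\ge \e$, and one of high complexity can be traded, using equidistribution of high-rank systems of linear forms on $S^n$ (the standard fact that a linear form on $S^n$ either is low-complexity or equidistributes, iterated), for one with violation probability bounded below by a function of $p, |S|$ alone.

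Finally I would assemble the iteration: start with $\cg = \emptyset$, $W = S^n$; at each step either stop (when the density gap drops below $\e$) or apply the increment step to add one condition and multiply the density of $W$ by a factor $\le 1 - \eta(\e,p,|S|)$; by Lemma A the density of $W$ cannot drop below $c(p,|S|)^{|\cg|}$ while $W \supseteq V \ne \emptyset$ — wait, more carefully, $V$ may be empty, in which case the claim is trivial since we can internally $\e$-approximate by the empty set using one contradictory condition — so assuming $V \ne \emptyset$, the density of $W$ stays $\ge |V|/|S^n| \ge$ some positive quantity only after we fix it; the cleaner bound is that density drops geometrically so after $O_{\e,p,|S|}(1)$ steps it is below $\e$ in \emph{absolute} terms relative to... hmm, let me instead phrase the stopping condition as: iterate until $|W \setminus V| \le \e|S^n|$. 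Each non-terminal step multiplies $|W|$ by at most $1 - \eta$ where $\eta$ depends only on $\e, p, |S|$, because a non-terminal step has $|W\setminus V| > \e |S^n|$ and the increment step removes a $\ge \eta$-fraction of $W$; since $|W| \le |S^n|$ always and $|W| \ge |V| \ge 0$, and in fact once $|W| < \e|S^n|$ we certainly have $|W \setminus V| < \e |S^n|$ and stop, the process terminates after at most $\log_{1-\eta}\e = O_{\e,p,|S|}(1)$ steps, giving $K = K(\e,p,|S|)$ conditions. The one genuine subtlety to nail down in writing this up is the trichotomy in the increment step (bounded rank / unbounded rank forcing tiny $V$ / extracting a single high-violation condition), and in particular making the equidistribution-on-$S^n$ statements quantitatively clean; that is where essentially all the work lies.
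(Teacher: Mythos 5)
Your iteration maintains a set $\cg$ of kept conditions whose satisfying set $W$ \emph{contains} $V$, and you stop when $|W\setminus V|\le\e|S^n|$. But that is an \emph{external} approximation: the theorem asks for a set $U$ with $U\subset V$ and $|V\setminus U|\le\e|S^n|$, and your $W$ is not contained in $V$ (your parenthetical ``$W\supseteq V$ so $W$ automatically internally approximates $V$'' has the inclusion backwards). Worse, if the approximating conditions are drawn from $\cf$ itself, internal approximation forces the satisfying sets to be \emph{equal}, which is impossible with boundedly many conditions for the family $\{(x_1+x_i,\{0,1\}):2\le i\le k\}$ on $\{0,1\}^n$ -- this is exactly Example \ref{First example}, and the paper points out in its conclusion that internal approximation by a subfamily cannot work, while external approximation by a subfamily (which is what your greedy would actually produce, were the increment step available) is stated there as an open conjecture. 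The paper's proof gets internality by manufacturing \emph{new} conditions that imply all the old ones, e.g.\ $(\phi_0,H)$ with $H=\{t:V+t\subset E\}$ in Proposition \ref{Compression of symmetric sunflowers}; nothing in your scheme plays that role.

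The increment step itself also rests on a false dichotomy. You propose: if the remaining forms have bounded rank, keep a basis; if the rank is large, then ``the extra rank forces extra density loss'' via Lemma A, so $V$ is tiny inside $W$. Lemma A is a \emph{lower} bound on densities and cannot force density loss; and on $S^n$ high linear-algebraic rank simply does not imply small density: in Example \ref{First example} the $k-1$ forms $x_1+x_i$ are linearly independent yet the satisfying set has density greater than $1/2$. The correct replacement for rank is $r$-separation (every nontrivial linear combination has large support, Proposition \ref{Equidistribution of k-tuples of linear forms}), and the genuinely hard case is the non-separated one, where each form is a bounded-span part plus a bounded-support perturbation; handling those perturbations is where the paper's sunflower/Erd\H{o}s--Rado induction on the radius (Propositions \ref{Compression of symmetric sunflowers} and \ref{Compression of symmetric balls}) does all the work, and your sketch has no counterpart to it. As a secondary point, the ``peel off one form at a time'' proof of Lemma A does not work as stated, since after conditioning on the first condition the set is no longer a product $S^n$ and Proposition \ref{Lower bound on non-zero probabilities} no longer applies to the conditional distribution (a correct version can be proved by grouping coordinates according to their coefficient vectors, but with a weaker constant, and in any case it is not where the difficulty lies).
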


In order to give some idea of why this might be an interesting statement, we make a few remarks. Note first that if we set $p=2$, then the result is easy. We can identify $\{0,1\}^n$ with $\F_2^n$, and the satisfying set will take the form $\bigcap_{i=1}^r\{x:\phi_i(x)\in E_i\}$, where each $\phi_i$ is a linear form on $\F_2^n$ and each $E_i$ is a subset of $\F_2$. If $E_i=\F_2$, then we can discard the condition $(\phi_i,E_i)$ without affecting the satisfying set, and if any $E_i$ is empty then the satisfying set is empty, so we can obtain it using just the condition $(\phi_i,E_i)$. So without loss of generality all the sets $E_i$ are singletons and the satisfying set is an affine subspace. If the codimension of the subspace is greater than $\log_2(\e^{-1})$ then we can internally $\e$-approximate it by the empty set, which as above just needs one condition. Otherwise, we can pick $k$ linearly independent conditions that specify the subspace and we have a set of at most $\log_2(\e^{-1})$ conditions with the same satisfying set.

It is only slightly harder to prove the following closely related statement about linear conditions on $\F_p^n$ with $p>2$, by which we mean conditions $(\phi,E)$, where $\phi$ is a linear form on $\F_p^n$ and $E\subset\F_p$. However, the bound is slightly weaker, owing to a crude argument at the end of the proof. We have not tried to improve the bound, and indeed it is not obviously possible, because in this case satisfying sets do not have to be affine subspaces.

\begin{proposition}\label{Compression in the basic case} For every $\e>0$ and every $n$, every set of linear conditions on $\F_p^n$ can be internally $\e$-approximated by a set of at most $(p \log \epsilon^{-1})^p$ linear conditions.
\end{proposition}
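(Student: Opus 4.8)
The plan is to work with a set of linear conditions $(\phi_1,E_1),\dots,(\phi_k,E_k)$ on $\F_p^n$ and repeatedly replace conditions on a single linear form by conditions that pin that form down to a single value, at the cost of throwing away a controlled amount of density. First I would group the conditions according to the \emph{direction} of the linear form: since $\phi$ and $\lambda\phi$ cut out conditions on the same one-dimensional space of functionals, we may assume the $\phi_i$ are pairwise non-proportional, and for each $i$ the condition $(\phi_i,E_i)$ partitions $\F_p^n$ into the fibres $\{x:\phi_i(x)=t\}$, $t\in\F_p$, of which those with $t\in E_i$ are kept. The satisfying set $V$ is the intersection of these $k$ kept unions of fibres.

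The key step is a greedy/iterative cleaning argument. If $V$ has density less than $\e$ we are immediately done: replace the whole system by a single unsatisfiable condition (e.g. $(\phi_1,\emptyset)$), which internally $\e$-approximates $V$ by the empty set. Otherwise, I would show that after passing to a bounded number of the conditions, the rest become essentially redundant. Concretely, process the forms $\phi_1,\phi_2,\dots$ in turn; maintain a current ``locked'' affine subspace $W$ (initially $\F_p^n$) obtained by fixing the values of the forms selected so far. At stage $i$, restricted to $W$ the form $\phi_i$ is either constant — in which case the condition $(\phi_i,E_i)$ either holds on all of $W$ (discard it) or fails on all of $W$ (the satisfying set inside $W$ is empty, so the empty set approximates and we stop) — or non-constant, hence equidistributed over $\F_p$ on $W$, so the condition $(\phi_i,E_i)$ keeps a $|E_i|/p$ proportion of $W$. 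In the non-constant case, select $\phi_i$, and lock it to whichever value $t_i\in E_i$ has the largest fibre inside $V\cap W$; this replaces $W$ by an affine subspace $W'$ of codimension one more, with $|V\cap W'|\ge |E_i|^{-1}|V\cap W|\ge p^{-1}|V\cap W|$. Each selection therefore costs at most a factor $p$ in the density of $V$ that we retain, and the density of $V$ is at least $\e$, so we can afford at most $\log_p\e^{-1}$ selections before the retained density would drop below... — at which point every remaining form must be constant on the current $W$ and gets discarded or ends the process. This yields a system of at most $\log_p\e^{-1}$ conditions of the form $(\phi_{i_j},\{t_{i_j}\})$ whose satisfying set is the affine subspace $W'$, which is contained in $V$ and whose complement in $V$ we still need to bound.

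The remaining, slightly lossy step — and the one I expect to be the main obstacle to getting a clean bound — is controlling $|V\setminus W'|$. The greedy choice guarantees $|W'|$ is a $\ge (p^{\text{(\# selections)}})^{-1}$-fraction of $|V|$ only when at every stage we kept the \emph{largest} fibre; but to conclude $|V\setminus W'|\le\e|\F_p^n|$ we need more care, because $V$ itself can be as small as $\e|\F_p^n|$. The fix is to run the selection not to maximise density but until the \emph{number} of remaining non-redundant conditions is small: one shows that if more than $(p\log\e^{-1})^p$ conditions are genuinely active then some form, after locking boundedly many others, becomes constant, via a counting argument on the at most $p-1$ ``active values'' each of the at most $\log\e^{-1}$ locked forms can take — this is exactly where the exponent $p$ and the crude bound $(p\log\e^{-1})^p$ advertised in the statement come from. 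Once the active conditions number at most $(p\log\e^{-1})^p$, take $U$ to be the satisfying set of those conditions together with, if needed, the locking conditions $(\phi_{i_j},\{t_{i_j}\})$; then $U\subseteq V$ by construction, and $V\setminus U$ is covered by the fibres we discarded, each of relative size $p^{-1}$ inside an affine subspace on which $V$ had density $<\e$ conditional on continuing — giving $|V\setminus U|\le\e|\F_p^n|$ after summing a geometric series in $p^{-1}$. I would expect the bookkeeping in this last paragraph, rather than any conceptual difficulty, to be the delicate part, and it is presumably why the authors call the final argument ``crude''.
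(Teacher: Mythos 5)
There is a genuine gap, and it is in the heart of the argument rather than the bookkeeping. Your approximating set is (essentially) the locked affine subspace $W'$, but an internal $\e$-approximation must satisfy $|V\setminus U|\le\e p^n$ with density measured in all of $\F_p^n$, and each locking step discards up to a $(1-1/|E_i|)$-fraction of $V\cap W$. So whenever $V$ is large the output is hopeless: take just the two conditions $(x_1,\F_p\setminus\{p-1\})$ and $(x_2,\F_p\setminus\{p-1\})$; then $V$ has density $(1-1/p)^2$, your process locks $x_1$ and $x_2$ to single values and outputs a subspace of density $p^{-2}$, so $V\setminus U$ has density $(1-1/p)^2-p^{-2}$, far larger than $\e$. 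The statement ``we can afford at most $\log_p\e^{-1}$ selections'' controls the fraction of $V$ that is retained, which is not the relevant quantity; when $V$ has density at least $\e$ the correct move is to keep \emph{all} the conditions (the approximation is then exact), and what must be bounded is their number, not the codimension of a locked subspace. Your attempted repair places the real content exactly where no proof is given: the claim that more than $(p\log\e^{-1})^p$ ``genuinely active'' conditions force some form to become constant after locking boundedly many others is precisely what would need to be established (a form is constant on the locked subspace only if it lies in the affine span of the locked forms, and nothing in your setup forces that), and the closing ``geometric series in $p^{-1}$'' estimate fails because the fibres discarded at the first few stages sit inside subspaces of small codimension and can contain almost all of $V$, as in the example above.

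For comparison, the paper's proof avoids locking altogether via a simple dichotomy. After merging conditions with equal forms (and discarding trivial ones), choose a maximal linearly independent subfamily $\{\phi_i:i\in I\}$. The events $\phi_i(x)\in E_i$, $i\in I$, are independent for uniform $x\in\F_p^n$, so the density of $V$ is at most $(1-1/p)^{|I|}$. If $|I|>p\log(\e^{-1})$ this is at most $\e$ and a single empty condition suffices; otherwise every form lies in a span of dimension at most $p\log(\e^{-1})$, which bounds the total number of distinct conditions, and one simply keeps them all, losing no density whatsoever. That dichotomy -- many independent conditions make $V$ small, few independent conditions make the whole family small -- is the idea missing from your proposal, and it is also where the exponent in the stated bound actually comes from.
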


\begin{proof} Let the conditions be $(\phi_i,E_i)$ for $i=1,\dots,r$. As in the $p=2$ case we may assume that each $E_i$ is a non-empty proper subset of $\F_p$. We may also assume that the $\phi_i$ are distinct, since if $\phi_i=\phi_j$ then we may replace the two conditions $(\phi_i,E_i)$ and $(\phi_j,E_j)$ by the single condition $(\phi_i,E_i\cap E_j)$. 

Now choose a maximal set $I$ such that the forms $\phi_i$ with $i\in I$ are linearly independent. Then if we choose $x\in\F_p^n$ uniformly at random, the events $\phi_i(x)\in E_i$ with $i\in I$ are independent, so the probability that $x$ belongs to the satisfying set is at most $\prod_{i\in I}|E_i|/p$, which is at most $(1-1/p)^{|I|}$. If $|I|>p\log(\e^{-1})$, then this is at most $\e$, so we can internally $\e$-approximate the satisfying set by the empty set, which requires just one condition. Otherwise, the forms $\phi_1,\dots,\phi_r$ are contained in a linear subspace of dimension at most $p\log(\e^{-1})$, and there is at most one condition for each $i$, so $r\leq(p\log(\e^{-1}))^p$, which proves the result.
\end{proof}

The key to the proofs above was that if a set satisfies $k$ non-trivial linearly independent conditions, then its density must be at most $\a^k$ for some $\a<1$ that is independent of $n$. What makes Theorem \ref{main approximation theorem} harder is that this is no longer true for mod-$p$ conditions on $\{0,1\}^n$, as Example \ref{First example} shows. Example \ref{First example} is a simple representative of a large class of examples, but it illustrates the basic challenge that we have to deal with.

Once we have proved Theorem \ref{main approximation theorem}, we shall generalize it to a similar statement with $\F_p$ replaced by an arbitrary finite Abelian group. If $G$ and $H$ are finite Abelian groups and $n$ is a positive integer, then an $H$-\emph{condition} on $G^n$ is a pair $(\phi,E)$, where $\phi:G^n\to H$ is a homomorphism and $E\subset H$. The definition of an internal $\e$-approximation carries over in the obvious way: if $S\subset G$, then we shall say that a set $\cf$ of $H$-conditions on $G^n$ \emph{internally $\e$-approximates} a set $\cg$ of $H$-conditions on $G^n$ if every point that satisfies $\cf$ satisfies $\cg$ and the difference of the satisfying sets has density at most $\e$ inside $S^n$.

\begin{theorem}\label{Compression in the groups case for arbitrary alphabets} Let $G$ and $H$ be finite Abelian groups, let $S$ be a non-empty subset of $G$, and let $\e>0$. Then there exists $A=A(\e,G,S,H)$ such that every set of $H$-conditions can be $\e$-approximated on $S^n$ by a set of at most $A$ $k$-dimensional $H$-conditions. 
\end{theorem}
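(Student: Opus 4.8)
The plan is to reduce the general finite-abelian-group case to Theorem~\ref{main approximation theorem} by decomposing $G$ and $H$ into cyclic factors and then into $p$-primary components. First I would write $G\cong\prod_{j} \Z_{q_j}$ and $H\cong\prod_{i}\Z_{r_i}$ for prime powers $q_j,r_i$; a homomorphism $\phi:G^n\to H$ is then a tuple of homomorphisms into each $\Z_{r_i}$, and the condition $(\phi,E)$ can be rewritten as a finite conjunction of conditions each involving only the coordinates of $H$ lying over a single prime. Indeed, $H\cong\prod_p H_p$ over its primary components, every homomorphism $G^n\to H$ splits as a product of homomorphisms $G^n\to H_p$, and a subset $E\subset H$ is a union of at most $|H|$ ``boxes'' $\prod_p E_p$, so a single $H$-condition is equivalent to the disjunction of at most $|H|$ systems, each a conjunction of one $H_p$-condition per prime. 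Since a disjunction of $m$ satisfying sets can be turned into a complement of a conjunction --- or handled directly by approximating each disjunct and noting that a union of internally $\e/m$-approximated sets is internally $\e$-approximated --- it suffices to prove the theorem when $H$ is a $p$-group.

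Next, with $H$ a $p$-group, I would observe that a homomorphism $\phi:G^n\to H$ factors through $G/pG \cdots$; more precisely, only the $p$-primary part $G_p$ of $G$ maps nontrivially, and in fact $\phi$ factors through $(G_p/p^{e}G_p)^n$ where $p^e$ is the exponent of $H$. Writing $G_p\cong\prod_t \Z_{p^{a_t}}$, each $\Z_{p^{a_t}}$ embeds into, and is a quotient of, a bounded power of $\Z_p$-modules; the cleanest route is to note that $\Z_{p^a}$-valued conditions on $(\Z_{p^b})^n$ can be simulated by $\Z_p$-valued conditions after passing to the $b$-fold ``digit expansion'' coordinates, at the cost of replacing $n$ by $bn$ and $S$ by a fixed subset $S'$ of $\F_p^{\,b\cdot(\text{number of cyclic factors of }G)}$ --- but since Theorem~\ref{main approximation theorem} is stated for arbitrary nonempty $S\subset\F_p$ and all $n$, and since a product of fixed dimension is still of the form $(\F_p^d)^n=\F_p^{dn}$ with $S'^n$ a product set after reindexing, this is exactly the shape to which Theorem~\ref{main approximation theorem} applies (one may need to phrase it for $S\subset\F_p^d$, which follows from the $\F_p$ case by treating $\F_p^{dn}$ as $\F_p$-coordinates and noting $S'^n$ is cut out by the membership conditions, or simply re-prove the $\F_p^d$ version verbatim since no step used that the alphabet lives in one coordinate).

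The main steps in order: (1)~split $H$ into primary components and reduce a single $H$-condition to a bounded disjunction of conjunctions of $H_p$-conditions; (2)~reduce a disjunction to the approximation of each disjunct with error $\e/m$; (3)~for fixed prime $p$, reduce $\Z_{p^a}$-valued homomorphisms on $(\Z_{p^b})^n$ to $\F_p$-valued linear forms on $\F_p^{dn}$ with the alphabet $S$ replaced by a fixed product subset $S'\subset\F_p^d$, transporting the condition and the error faithfully; (4)~apply Theorem~\ref{main approximation theorem} (in its $\F_p^d$-alphabet form) to each resulting system to get a bounded number $K=K(\e/m,p,S',d)$ of mod-$p$ conditions internally approximating it; (5)~translate these $\F_p$-conditions back into $H$-conditions on $G^n$ --- a linear form on $\F_p^{dn}$ composed with the digit-expansion map is a homomorphism $G^n\to\F_p\hookrightarrow H$, and a subset of $\F_p$ pulls back to a subset of $H$ --- and collect: at most $m\cdot K$ conditions suffice, giving $A(\e,G,S,H)$, and the containment $U\subset V$ is preserved throughout because every reduction sends internal approximations to internal approximations.

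The hard part will be step~(3): setting up the digit-expansion correspondence so that a $\Z_{p^a}$-valued homomorphism on $(\Z_{p^b})^n$ genuinely becomes an $\F_p$-\emph{linear} form (not merely a polynomial) on the expanded coordinates, and so that the image alphabet is a \emph{fixed} subset $S'$ independent of $n$ with $S'^n$ realised as an honest product set after reindexing the $dn$ coordinates as $n$ blocks of $d$. Carries in $\Z_{p^b}$-addition are the obstruction to linearity; the resolution is that we never need to add elements of the alphabet to each other --- the homomorphism is linear over $\Z$ in the input coordinates before reduction mod $p^a$, and reduction mod $p^a$ of a fixed $\Z$-linear combination, read through base-$p$ digits, is itself $\F_p$-linear in the digits of the inputs precisely because the coefficients are fixed integers, so the carry pattern is a fixed affine-linear-over-$\F_p$ function of the digits once one works modulo a fixed power of $p$. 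Making this bookkeeping precise, and checking that the finitely many ``boundary'' subtleties (e.g.\ when $a>b$ or the exponent of $H$ exceeds that of $G_p$, in which case the homomorphism is further constrained) do not break the product structure of $S'^n$, is where the real work lies; everything else is routine pigeonholing and relabelling.
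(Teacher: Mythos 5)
There is a genuine gap, and it sits exactly where you defer the work as ``bookkeeping'': step (3) is false, and not repairably so. The base-$p$ digits of a mod-$p^a$ linear combination of $n$ inputs are not a fixed affine-linear function of the digits of the inputs over $\F_p$: the carry into the $j$-th digit is an iterated symmetric function of all $n$ coordinates whose degree and complexity grow with $n$, not a bounded object. Moreover the statement your reduction would need is simply untrue. Take $p=2$, $G=H=\Z_4$, $S=\{0,1\}\subset\Z_4$, and the single $H$-condition $(\phi,E)$ with $\phi(x)=x_1+\dots+x_n$ and $E=\{0,1\}$, whose satisfying set $V\subset\{0,1\}^n$ consists of the $x$ with Hamming weight $\equiv 0$ or $1 \pmod 4$. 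Any set $U$ defined by $K$ mod-$2$ linear conditions in the digit coordinates (here just the coordinates themselves) is a union of atoms of the partition generated by $K$ linear forms. Since $E$ contains one element of each parity, the $(-1)^{|x|}$ component of $\mathbbm 1_E(\phi(x))$ vanishes, and for every linear form $\ell$ one has $|\E_{x\in\{0,1\}^n}(-1)^{\ell(x)}i^{|x|}|\le 2^{-n/2}$; hence every atom meets $V$ in proportion $1/2+o(1)$, so no nonempty atom is contained in $V$, the only internal approximant of bounded complexity is essentially empty, and $|V\setminus U|\to 1/2$. So conditions over a group of exponent $p^2$ cannot be internally approximated by boundedly many $\F_p$-linear conditions on digits, and no organisation of the carries can make the reduction to Theorem \ref{main approximation theorem} succeed. (This does not contradict the theorem you are trying to prove, because there the approximants are $H$-conditions, i.e.\ homomorphisms into $H$, not mod-$p$ conditions.)

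There are secondary problems as well. In step (5), a linear form in the digit coordinates composed with the digit-expansion map is in general not a homomorphism $G^n\to\F_p$ (the digit maps themselves are not homomorphisms), so even if step (4) produced an approximation, it would not consist of $H$-conditions as required. In steps (1)--(2), each individual condition becomes a disjunction of at most $|H|$ box conditions, but the family to be approximated is an unbounded conjunction of such disjunctions; distributing produces unboundedly many disjuncts, and a union of satisfying sets is not itself the satisfying set of a set of conditions without increasing the dimension of the target. Finally, the case of a multidimensional target within a single prime (the paper notes that even $G=\F_p$, $H=\F_p^2$ already requires new ideas) is not addressed by anything in your outline. The paper's proof avoids all of this by staying inside $H$: it uses characters of $H$, a notion of independence of homomorphisms, a lower bound for point probabilities of homomorphisms restricted to $S^n$, and a double induction on a subgroup $H_1\le H$ and a sunflower radius, combining the sunflower machinery of the $\F_p$ argument with the subgroup induction rather than reducing to the mod-$p$ theorem.
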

\noindent This theorem turns out to need additional ideas even when $G=\F_p$ and $H=\F_p^2$.
\medskip

The other generalization we shall prove is to polynomial conditions. If $S\subset\F_p$, then a \emph{$k$-dimensional condition of degree at most $d$} on $S^n$ is a pair $(\phi,E)$, where $\phi:\F_p^n\to\F_p^k$ is a polynomial map of degree at most $d$ (that is, each coordinate of $\phi(x)$ is obtained by applying a polynomial with degree at most $d$ to $x$) and $E\subset\F_p^k$. We prove the following theorem, which has a weaker conclusion than in the linear case, but still shows that a set defined by polynomial conditions can be approximated by a set of bounded complexity defined by polynomial conditions.

\begin{theorem}\label{Compression of polynomial conditions to a single condition} For every $\e>0$, every prime $p$, and every pair of positive integers $k$ and $d$ with $d<p$, there exists $M$ such that for every non-empty $S\subset\F_p$, every set of $k$-dimensional conditions with degree at most $d$ on $S^n$ can be internally $\e$-approximated by a set consisting of a single $m$-dimensional condition with degree at most $d$ on $S^n$ with $m\leq M$.
\end{theorem}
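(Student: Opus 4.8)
The plan is first to reduce the statement to a multi-condition version and then to bundle. Suppose we can show that every set of $k$-dimensional degree-$\le d$ conditions on $S^n$ can be internally $\e$-approximated by a set of at most $L$ conditions, each of dimension at most $k'$ and degree at most $d$, with $L$ and $k'$ depending only on $\e,p,k,d$. Given such conditions $(\psi_1,F_1),\dots,(\psi_L,F_L)$ with $\psi_j\colon\F_p^n\to\F_p^{m_j}$, put $\psi=(\psi_1,\dots,\psi_L)\colon\F_p^n\to\F_p^m$ with $m=m_1+\dots+m_L\le Lk'$, identify $\F_p^m$ with $\F_p^{m_1}\times\dots\times\F_p^{m_L}$, and set $F=F_1\times\dots\times F_L$. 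Then $(\psi,F)$ is a single $m$-dimensional condition of degree at most $d$ with exactly the same satisfying set, so $M=Lk'$ works. Thus everything comes down to proving the analogue of Theorem~\ref{main approximation theorem} for degree-$\le d$ conditions.

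For that analogue the natural approach is to run an argument parallel to the proof of Theorem~\ref{main approximation theorem}, with the linear-algebraic ingredients replaced by the polynomial regularity method, which is available precisely because $d<p$. One would maintain a bounded subfamily of the given conditions; apply a polynomial regularity lemma to the coordinate polynomials occurring in it to obtain a \emph{high-rank} factor $Q=(Q_1,\dots,Q_m)$ consisting of degree-$\le d$ polynomials through which all of them factor; and use that a high-rank factor, when restricted to $S^n$, is approximately equidistributed, so that membership of a point of $S^n$ in the satisfying set of the subfamily is governed, up to a small error, by membership of $Q(x)$ in a fixed region $B\subset\F_p^m$, and the effect of adding one more condition is visible as the extra piece it removes from $B$. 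At each stage one shows that either the current satisfying set already internally $\e$-approximates the full satisfying set $V$, in which case we stop, or some further condition can be added so as to increase by a definite amount a rank- or codimension-type monovariant bounded in terms of $\e,p,d$ alone; this bounds the number of stages, and so the number of conditions produced, independently of the number of original conditions and of $n$.

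I expect the main obstacle to be formulating and proving the equidistribution statement over $S^n$ rather than over $\F_p^n$. It is \emph{not} enough for the factor $Q$ to be high-rank as a system of polynomials on $\F_p^n$: for instance $x_1^2-x_1$ has bias $O(p^{-1/2})$ on $\F_p^n$ but is identically zero on $\{0,1\}^n$, so the correct notion of ``high rank'' must be measured relative to $S^n$, which in turn calls for a representation of degree-$\le d$ functions adapted to $S^n$ (the multilinear representation when $S=\{0,1\}$, and an analogue, via a change of variables sending each coordinate into $\F_p^{|S|-1}$, for general $S$) together with a regularity lemma for that representation. Two further points need care. First, the bound must not depend on the number of original conditions, so one cannot simply regularize all of them at once: Example~\ref{Third example} already shows that the naive factor then has unbounded dimension, which is why the argument must be iterative. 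Second, one must produce an \emph{internal} (one-sided) approximation; the procedure naturally yields a bounded-complexity superset of $V$ that $\e$-approximates it, and extracting from this a bounded family whose joint satisfying set lies inside $V$ --- as in Examples~\ref{First example}--\ref{Third example}, where the given conditions collectively force a bounded-complexity condition such as $x_1=0$ that is itself contained in $V$ --- requires applying the same analysis to the conditions that separate $V$ from that superset. One checks throughout that bundling and the regularity lemma keep all degrees at most $d$, which is where $d<p$ is used.
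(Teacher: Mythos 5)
Your opening reduction (bundle a bounded family of bounded-dimensional conditions into one condition by taking products of the targets) is fine and is exactly the paper's Lemma \ref{union lemma}, and you correctly identify that equidistribution must be measured relative to $S^n$ rather than $\F_p^n$; the paper handles this by the rank-relative-to-$S$ notion of Definition \ref{rank of a polynomial over F_p} and imports Theorem \ref{Equidistribution of polynomials} from the companion paper, so that ingredient does not need to be rebuilt. But the core of your argument --- ``maintain a bounded subfamily, regularize it into a high-rank factor, and at each stage either stop or add a condition that increases a rank- or codimension-type monovariant bounded in terms of $\e,p,d$ alone'' --- is precisely the step that is not substantiated, and it is where the known difficulty lives. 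Over $S^n$, adding a condition that is genuinely new relative to the current factor need not cut the density by a definite factor (Examples \ref{First example} and \ref{Third example}: the conditions $x_1x_i=0$ are pairwise ``independent'' in any reasonable rank sense, yet the density stays near $1/p$), so the natural monovariants (dimension of the factor, density of the satisfying set) do not terminate the iteration after boundedly many steps; you concede the factor blows up if one regularizes everything, but you do not explain what bounded quantity controls your iterative version. The paper avoids this not by a regularity iteration but by a different mechanism: take a maximal subfamily none of whose nonzero dual combinations has low $(d',S)$-rank (its size is bounded by Corollary \ref{bias implies low rank}, else the density is below $\e$), observe that every remaining polynomial then has a combination $\sum_i a_i^*\phi_i+b^*\phi$ of bounded rank, and use Lemma \ref{ball lemma} to rewrite each condition, exactly, on each fibre of $\psi=\sum_i a_i^*\phi_i$, as a condition whose degree profile strictly precedes the original one in the colex order; the whole proof is an induction on the degree profile, with Lemma \ref{bounded support linear case} as base case. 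Nothing in your sketch plays the role of this descent.

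The second genuine gap is internality. As you note, the satisfying set of a subfamily is a \emph{superset} of $V$, so your procedure naturally produces an external approximation, and your proposed repair --- ``apply the same analysis to the conditions that separate $V$ from that superset'' --- is circular: the discrepancy $U\setminus V$ is not the satisfying set of a bounded collection of the given conditions, so there is no family of conditions to which the analysis can be reapplied, and the examples show the internal approximant must in general use \emph{new} conditions (such as $x_1=0$ in Example \ref{First example}) rather than a subfamily. In the paper's argument internality never has to be repaired: the only lossy steps are replacing a satisfying set of density below $\e$ by the empty condition (trivially internal) and the analogous step inside Lemma \ref{bounded support linear case}; every other step (the fibre-wise rewriting via Lemma \ref{ball lemma}, the bundling of Lemma \ref{union lemma}) preserves the satisfying set exactly. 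Without an analogue of the degree-profile descent and without a mechanism that keeps the approximation one-sided, the proposal as written does not yield the theorem.
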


\section{Preliminary lemmas} \label{Section: Equidistribution and lower bound statements on restricted subsets}

The next section will be devoted to a proof of Theorem \ref{main approximation theorem}. To prepare for this, we collect together some more technical statements that will be useful in the proof. They will mainly concern the joint distribution of the values $\phi_1(x),\dots,\phi_k(x)$, where each $\phi_i$ is a linear form from $\F_p^n$ to $\F_p$ and $x$ is chosen uniformly from $S^n$ for some subset $S$ of $\F_p$ of size at least 2. We begin with a result that will help us with the case $k=1$. We write $\omega_p$ for $\exp(2\pi i/p)$.

\begin{lemma}\label{Bound on modulus of ratio}
Let $p$ be a prime and let $S$ be a subset of $\F_p$ that contains at least two elements. Then 
\[|\E_{x \in S} \omega_p^{tx}| \le 1-p^{-2}\] 
for each $t \in \F_p^*$.
\end{lemma}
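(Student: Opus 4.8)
The plan is to bound the \emph{second} moment $\bigl|\E_{x\in S}\omega_p^{tx}\bigr|^2$ and only afterwards take a square root, since squaring is precisely what lets the hypothesis $|S|\ge 2$ produce a saving of order $p^{-2}$ regardless of how large $S$ is. Writing $s=|S|$, I would first expand
\[
\Bigl|\sum_{x\in S}\omega_p^{tx}\Bigr|^2=\sum_{x,y\in S}\omega_p^{t(x-y)}=s+\sum_{\substack{x,y\in S\\ x\ne y}}\omega_p^{t(x-y)},
\]
and note that the off-diagonal sum is real -- the terms indexed by $(x,y)$ and $(y,x)$ are complex conjugates -- and therefore equals $\sum_{x\ne y}\cos\!\bigl(2\pi t(x-y)/p\bigr)$.

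The one elementary input is the inequality $\cos(2\pi a/p)\le\cos(2\pi/p)$ for every $a\in\F_p^*$: representing $a$ by an integer in $\{1,\dots,p-1\}$, the angle $2\pi a/p$ lies in $[\,2\pi/p,\;2\pi-2\pi/p\,]$, on which $\cos$ attains its maximum $\cos(2\pi/p)$ at the two endpoints. Since $t(x-y)\in\F_p^*$ whenever $x\ne y$, and there are $s(s-1)$ ordered pairs $(x,y)$ with $x\ne y$, the identity above yields $\bigl|\sum_{x\in S}\omega_p^{tx}\bigr|^2\le s+s(s-1)\cos(2\pi/p)$, and dividing by $s^2$,
\[
\Bigl|\E_{x\in S}\omega_p^{tx}\Bigr|^2\le\cos(2\pi/p)+\tfrac1s\bigl(1-\cos(2\pi/p)\bigr).
\]
This is where $|S|\ge 2$ enters: because $1-\cos(2\pi/p)\ge 0$ and $s\ge 2$, the right-hand side is at most $\cos(2\pi/p)+\tfrac12\bigl(1-\cos(2\pi/p)\bigr)=\tfrac12\bigl(1+\cos(2\pi/p)\bigr)=\cos^2(\pi/p)$ by the half-angle identity. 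Taking square roots gives $\bigl|\E_{x\in S}\omega_p^{tx}\bigr|\le\cos(\pi/p)$.

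It then remains only to verify the (deliberately non-sharp) numerical bound $\cos(\pi/p)\le 1-p^{-2}$, equivalently $1-\cos(\pi/p)\ge p^{-2}$. Writing $1-\cos(\pi/p)=2\sin^2(\pi/(2p))$ and using $\sin(\pi/(2p))\ge 1/p$ -- which is Jordan's inequality, or equivalently the concavity of $\sin$ on $[0,\pi]$ applied to $\sin\!\bigl(\tfrac1p\cdot\tfrac{\pi}{2}\bigr)\ge\tfrac1p\sin\tfrac{\pi}{2}$ -- one gets $1-\cos(\pi/p)\ge 2/p^2\ge p^{-2}$, which finishes the proof.

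I do not anticipate a genuine obstacle, but the estimate must not be done too crudely. Pairing up the elements of $S$ and applying the triangle inequality term by term gives only a bound of the shape $1-O(p^{-3})$, since the gap between $\cos(\pi/p)$ and $1$ ends up divided by $|S|$, which can be as large as $p$ -- too weak for the claimed $1-p^{-2}$. Routing the argument through the second moment and the identity $\tfrac12(1+\cos(2\pi/p))=\cos^2(\pi/p)$ is exactly what keeps the saving at scale $p^{-2}$ uniformly in $|S|$. (An essentially equivalent variant pairs the elements of $S$ and uses $|1+\omega_p^d|=2|\cos(\pi d/p)|\le 2\cos(\pi/p)$ for $d\in\F_p^*$, reaching the same bound $\cos(\pi/p)$.)
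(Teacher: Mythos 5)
Your proof is correct, but it takes a different route from the paper. You bound the second moment directly, expanding $|\sum_{x\in S}\omega_p^{tx}|^2$ over ordered pairs, bounding each off-diagonal term by $\cos(2\pi/p)$, and then using the half-angle identity together with $|S|\ge 2$ to reach $|\E_{x\in S}\omega_p^{tx}|\le\cos(\pi/p)$, finishing with Jordan's inequality $\sin(\pi/2p)\ge 1/p$. The paper instead reduces to the case $|S|=2$ by writing $\E_{x\in S}\omega_p^{tx}$ as an average of the quantities $\E_{x\in T}\omega_p^{tx}$ over all two-element subsets $T\subset S$ (a convexity step, not a naive pairing into disjoint pairs), and then bounds $|1+\omega_p^y|/2$ for $y\ne 0$ by a Taylor estimate for cosine, getting $1-\pi^2/8p^2\le 1-p^{-2}$. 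Note that this averaging over \emph{all} $2$-subsets makes the bound uniform in $|S|$ with no loss, so the concern in your last paragraph about the pairing approach degrading to $1-O(p^{-3})$ does not apply to the paper's version; indeed, done sharply it gives the same $\cos(\pi/p)$ that you obtain, and your remark at the end about the variant using $|1+\omega_p^d|=2|\cos(\pi d/p)|\le 2\cos(\pi/p)$ is essentially the paper's argument. What your second-moment route buys is that it avoids the reduction step entirely and yields the clean, essentially optimal intermediate bound $\cos(\pi/p)$ (attained when $S$ consists of two adjacent residues); what the paper's route buys is a one-line reduction to the two-point case after which only a single elementary cosine estimate is needed. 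All the steps you use (realness of the off-diagonal sum, $\cos(2\pi a/p)\le\cos(2\pi/p)$ for $a\ne 0$, the half-angle identity, nonnegativity of $\cos(\pi/p)$ when taking the square root, and Jordan's inequality) check out.
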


\begin{proof} By the triangle inequality, it is sufficient to prove the result in the case $|S|=2$, since 
\[\E_{x\in S}\omega_p^{tx}=\E_{{T\subset S}\atop{|T|=2}}\E_{x\in T}\omega_p^{tx}.\]
But if $y\ne 0$, then 
\[|1+\omega_p^y|^2=2+2\cos(2\pi y/p)\leq 2+2\cos(2\pi/p).\]
Using the bound $\cos\theta\leq 1-\theta^2/2+\theta^4/24$, which is at most $1-\theta^2/4$ when $\theta\leq 2\pi/3$, we find that this is at most $4-4\pi^2/4p^2=4-\pi^2/p^2=4(1-\pi^2/4p^2)$. It follows that $|1+\omega_p^y|/2\leq 1-\pi^2/8p^2\leq 1-1/p^2$, which implies the result.
\end{proof}

From this lemma we deduce an upper bound on the Fourier coefficients of a linear phase function restricted to $S^n$. Given a linear form $\phi$ defined by a formula $x\mapsto\sum_{i=1}^n\lambda_ix_i$, we define $Z(\phi)$ to be $\{i:\lambda_i\ne 0\}$. We shall call this set the \emph{support} of $\phi$, even though technically it is the support of the coefficients of $\phi$ with respect to the standard basis. This should cause no confusion, since the complement of the kernel of $\phi$ will play no role in the paper. 

Our bound shows that the Fourier coefficients are exponentially small in the size of the support.

\begin{proposition}\label{Small biases for linear forms} Let $p$ be a prime and let $S$ be a subset of $\F_p$ that contains at least two elements. Let $\phi: \F_p^n \rightarrow \F_p$ be a linear form. Then \[|\E_{x \in S^n} \omega_p^{\phi(x)}| \le (1-p^{-2})^{|Z(\phi)|}.\]  
\end{proposition}

\begin{proof}
If $\phi(x)=\sum_i\lambda_ix_i$, then the left-hand side splits as a product 
\[\E_{x \in S^n} \omega_p^{\phi(x)} = \prod_{i=1}^n \E_{x_i \in S} \omega_p^{\lambda_ix_i}.\] 
The inner expectation of the right-hand side is equal to $1$ for every $i\notin Z(\phi)$, and by Lemma \ref{Bound on modulus of ratio} it has modulus at most $1-p^{-2}$ for every $z \in Z(\phi)$. The result follows. \end{proof}

\begin{remark}
Note that the same conclusion holds for $t\phi$ for every non-zero $t$, since in that case $Z(t\phi)=Z(\phi)$.
\end{remark}

A standard calculation now gives us the equidistribution result we shall need. Given a linear map $\phi:\F_p^n\to\F_p^k$ and an element $a\in\F_p^k$, we write $a.\phi:\F_p^n\to\F_p$ for the map that takes $x$ to $a.\phi(x)$, where $a.b$ is notation for $\sum_{i=1}^ka_ib_i$.

\begin{proposition}\label{Equidistribution of k-tuples of linear forms}
Let $p$ be a prime, let $k$ be a positive integer, and let $S$ be a subset of $\F_p$ that contains at least two elements. Let $\phi: \F_p^n \rightarrow \F_p^k$ be a linear map and let $x$ be chosen uniformly at random from $S^n$. Suppose that $|Z(a.\phi)|\geq r$ for every non-zero $a\in\F_p^k$. Then for each $z\in\F_p^k$, 
\[|\P[\phi(x)=z]-p^{-k}| \le (1-p^{-2})^{r}.\]  
\end{proposition}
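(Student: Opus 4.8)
The plan is to derive the equidistribution bound from Proposition~\ref{Small biases for linear forms} by a standard Fourier-analytic argument. First I would write the indicator of the event $\{\phi(x)=z\}$ using the orthogonality relation on $\F_p^k$:
\[
\mathbbm 1[\phi(x)=z] = p^{-k}\sum_{a\in\F_p^k}\omega_p^{a.(\phi(x)-z)}.
\]
Taking expectations over $x$ chosen uniformly from $S^n$ and separating the term $a=0$ gives
\[
\P[\phi(x)=z] = p^{-k} + p^{-k}\sum_{a\neq 0}\omega_p^{-a.z}\,\E_{x\in S^n}\omega_p^{(a.\phi)(x)},
\]
so that
\[
\bigl|\P[\phi(x)=z]-p^{-k}\bigr| \le p^{-k}\sum_{a\neq 0}\bigl|\E_{x\in S^n}\omega_p^{(a.\phi)(x)}\bigr|.
\]

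Next I would apply Proposition~\ref{Small biases for linear forms} to each linear form $a.\phi:\F_p^n\to\F_p$ for $a\neq 0$. The hypothesis says exactly that $|Z(a.\phi)|\ge r$ for every nonzero $a$, so each inner expectation has modulus at most $(1-p^{-2})^{|Z(a.\phi)|}\le(1-p^{-2})^{r}$, the last inequality holding because $1-p^{-2}\in(0,1)$ and the exponent only increases. Hence the sum over the $p^k-1$ nonzero values of $a$ is at most $(p^k-1)(1-p^{-2})^r$, and multiplying by $p^{-k}$ gives a bound of $(1-p^{-k})(1-p^{-2})^r\le(1-p^{-2})^r$, which is what is claimed (in fact slightly stronger).

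This argument is entirely routine once the two preceding propositions are in hand; there is no real obstacle. The only point worth a moment's care is that the quantity $Z(a.\phi)$ refers to the support of the \emph{combined} linear form $x\mapsto a.\phi(x)=\sum_{i=1}^n(a.\phi_i)x_i$ where $\phi=(\phi_1,\dots,\phi_k)$, and it is this combined support that the hypothesis controls uniformly over nonzero $a$; one must not confuse it with the supports of the individual coordinate forms. With that understood, the displayed chain of inequalities above completes the proof.
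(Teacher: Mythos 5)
Your proposal is correct and follows essentially the same route as the paper: Fourier inversion of the indicator of $\{\phi(x)=z\}$, isolation of the $a=0$ term, and an application of Proposition~\ref{Small biases for linear forms} to each nonzero $a.\phi$ together with the triangle inequality. Your explicit count of the $p^k-1$ nonzero frequencies even yields the marginally sharper constant $(1-p^{-k})(1-p^{-2})^r$, which the paper's statement simply rounds up.
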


\begin{proof}
Let $\d_z$ be the function that takes the value $1$ at $z$ and 0 everywhere else. Then
\[\P[\phi(x)=z]=\E_{x\in S^n}\d_z(\phi(x)),\]
which by the Fourier inversion formula is equal to 
\[\E_{x\in S^n}\sum_{a\in\F_p^k}\hat{\d_z}(a)\omega_p^{a.\phi(x)}=\sum_{a\in\F_p}\hat{\d_z}(a)\E_{x\in S^n}\omega_p^{a.\phi(x)}.\] 
But $\hat{\d_z}(a)=p^{-k}\omega_p^{-a.z}$, so when $t=0$ it is equal to $p^{-k}$, and otherwise it has size at most $p^{-k}$. The result now follows from Proposition \ref{Small biases for linear forms}, the remark after it, and the triangle inequality.
\end{proof}

If a linear form $\phi:\F_p^n\to\F_p$ takes a value once, then it must take that value with probability at least $p^{-1}$. We now show that something like this, but weaker, holds for the restriction of $\phi$ to a set of the form $S^n$.

\begin{proposition}\label{Lower bound on non-zero probabilities} Let $p$ be a prime and let $S$ be a non-empty subset of $\F_p$ that contains at least two elements. For every linear form $\phi: \F_p \rightarrow \F_p$ and every $y \in \F_p$ the probability $\P_{x \in S^n}(\phi(x) = y)$ is either $0$ or at least $|S|^{-t}$, where $t=\lceil(p-1)/(|S|-1)\rceil$. \end{proposition}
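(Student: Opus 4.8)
The plan is to deduce this from the Cauchy--Davenport inequality. Write $\phi(x)=\sum_{i=1}^n\lambda_ix_i$ and let $Z=Z(\phi)=\{i:\lambda_i\ne 0\}$ be its support, so that $\phi(x)$ depends only on the coordinates $x_i$ with $i\in Z$. If $Z=\emptyset$ then $\phi$ is identically $0$ and the statement is trivial; otherwise we may assume that $\phi$ takes the value $y$ at some point of $S^n$, since there is nothing to prove when the probability is $0$.

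The key step is the claim that for any $T\subset Z$ with $|T|\geq t$, the partial sums $\sum_{i\in T}\lambda_ix_i$, taken over all choices of $(x_i)_{i\in T}\in S^T$, already exhaust $\F_p$. To see this, note that each $\lambda_i$ is non-zero, so each dilate $\lambda_iS$ has size $|S|$, and iterating the Cauchy--Davenport inequality gives
\[\Bigl|\sum_{i\in T}\lambda_iS\Bigr|\geq\min\bigl(p,\ |T|(|S|-1)+1\bigr),\]
which equals $p$ as soon as $|T|(|S|-1)\geq p-1$, i.e.\ as soon as $|T|\geq t$. Granting the claim, I would finish by distinguishing two cases according to the size of $Z$. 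If $|Z|\leq t$, pick any $x^*\in S^n$ with $\phi(x^*)=y$; then every $x\in S^n$ that agrees with $x^*$ on all coordinates in $Z$ also satisfies $\phi(x)=y$, and such $x$ form a proportion $|S|^{-|Z|}\geq|S|^{-t}$ of $S^n$. If $|Z|>t$, fix a subset $T\subset Z$ with $|T|=t$; for each of the $|S|^{|Z|-t}$ assignments of values in $S$ to the coordinates in $Z\setminus T$, the claim provides at least one assignment of values in $S$ to the coordinates in $T$ for which $\phi(x)=y$, while the remaining $n-|Z|$ coordinates are unconstrained. This produces at least $|S|^{|Z|-t}\cdot|S|^{n-|Z|}=|S|^{n-t}$ points $x\in S^n$ with $\phi(x)=y$, again a proportion at least $|S|^{-t}$.

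I do not expect any serious obstacle here: the content is entirely in recognising that the threshold $t=\lceil(p-1)/(|S|-1)\rceil$ is precisely the point at which Cauchy--Davenport forces the relevant sumset to be all of $\F_p$. The only thing to be careful about is to treat the case $|Z(\phi)|\leq t$ on its own, since then there need not be enough support coordinates to invoke the sumset bound, but in that regime a single witness point already yields the claimed lower bound.
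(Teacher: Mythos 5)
Your proposal is correct and follows essentially the same route as the paper: both use iterated Cauchy--Davenport to show that once $t$ supported coordinates are available the corresponding sumset is all of $\F_p$, giving the $|S|^{-t}$ bound, and both handle small support separately via the trivial $|S|^{-|Z(\phi)|}$ bound from a single witness point. Your direct counting over assignments to $Z\setminus T$ is just a rephrasing of the paper's conditioning/law-of-total-probability step.
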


\begin{proof}
Let $\phi:\F_p^n\to\F_p$ be a linear form given by the formula $\phi(x)=\sum_{i=1}^n\lambda_ix_i$. Without loss of generality there exists $m$ such that $\lambda_1,\dots,\lambda_m$ are non-zero and $\lambda_{m+1}=\dots=\lambda_n=0$. 

Since there are $|S|^m$ distinct choices for $(x_1,\dots,x_m)$ if $x\in S^n$, each value of $\phi(x)$ that is taken at all is taken with probability at least $|S|^{-m}$. 

We now prove a second upper bound that is less trivial. Write $\lambda.S$ for the dilate $\{\lambda u: u\in S\}$ of $S$ by $\lambda$. By the Cauchy-Davenport theorem, for every $r$ we have the inequality
\[|\lambda_1.S+\lambda_2.S+\dots+\lambda_r.S|\geq \min\{r(|S|-1)+1,p\}.\]
In particular, if $r(|S|-1)\geq p-1$, then $\lambda_1.S+\dots+\lambda_r.S=\F_p$. 

For such $r$, it follows that if $x$ is chosen uniformly at random from $S^n$, then for every $u\in\F_p$, the probability that $\lambda_1x_1+\dots+\lambda_rx_r=u$ is at least $|S|^{-r}$. By conditioning on the value taken by $\lambda_{r+1}x_{r+1}+\dots+\lambda_nx_n$ and applying the law of total probability, we deduce that each element of $\F_p$ is equal to $\phi(x)$ with probability at least $|S|^{-r}$.

Combining this with the first bound gives the result claimed.
\end{proof}

If $S=\{0,1,\dots,h\}$ and $h|(p-1)$, then the bound in Proposition \ref{Lower bound on non-zero probabilities} is optimal, since the linear form $\phi$ defined by $\phi(x) = x_1 + \dots + x_{(p-1)/h}$ takes the value $p-1$ with probability $|S|^{-(p-1)/h}$. Note that when $|S|=2$, the lower bound we obtain for each non-zero probability is $2^{-(p-1)}$. 

\section{Approximating sets of mod-$p$ conditions}\label{Section: Compression of linear conditions in restricted subsets of F_p}

In this section we shall prove Theorem \ref{main approximation theorem}. From now on, we shall write ``$\e$-approximated" to mean ``internally $\e$-approximated", since all the approximations we consider will be internal ones. (Of course, it is then important to keep in mind that the relation ``can be $\e$-approximated by" is not symmetric.) 

Our basic strategy will be to start with a set of mod-$p$ conditions and approximate it by sets with more and more structure until eventually we arrive at a set of bounded size. In order to explain this strategy in more detail, we need some definitions.

\begin{definition} \label{Key structures in the F_p case} Let $p$ be a prime. If $\phi_0:\F_p^n\to\F_p$ is a linear form, then a \emph{sunflower with centre} $\phi_0$ is a set of forms $\{\phi_0+\phi_i:i\in I\}$ such that the forms $\phi_i$ are disjointly supported. A \emph{symmetric family} of forms is a set $\{\phi_i:i\in I\}$ that are equal up to permutations of the coordinates -- that is, for each $y$ the number of coefficients of $\phi_i$ equal to $y$ is the same for each $i$. A \emph{symmetric sunflower} is a sunflower $\{\phi_0+\phi_i:i\in I\}$ such that the set $\{\phi_i:i\in I\}$ is a symmetric family. A set of forms $\{\phi_i:i\in I\}$ is $r$-\emph{separated} if $|Z(\sum_{i\in I}u_i\phi_i)|\geq r$ for every non-zero $u\in\F_p^{I}$ -- that is, if the support size of every non-trivial linear combination of the $\phi_i$ is at least $r$. The \emph{support distance} on the set of all linear forms $\phi:\F_p^n\to\F_p$ is the metric given by the formula $d(\phi,\psi)=|Z(\phi-\psi)|$. The \emph{ball of radius $r$ about $\phi_0$} is the set of all $\phi$ such that $d(\phi_0,\phi)\leq r$. We say that a sunflower $\{\phi_0+\phi_i:i\in I\}$ \emph{has radius at most} $r$ if it is contained in the ball of radius $r$ about $\phi_0$. 
\end{definition}

Note that, contrary to what our terminology might at first suggest, the condition that the forms are $r$-separated is stronger than simply saying that $d(\phi_i,\phi_j)\geq r$ for every $i\ne j$: we need all non-trivial linear combinations to have large support and not just differences between two distinct forms.

We begin with the following simple observation, which we shall use repeatedly.

\begin{remark}\label{union remark} Let $X$ be a finite set, let $q \ge 1$ be a positive integer and let $\epsilon > 0$. Let $\cf_1, \dots, \cf_q, \cg_1, \dots, \cg_q$ be sets of conditions on $X$. If $\cf_i$ internally $\epsilon$-approximates $\cg_i$ for each $i \in \lbrack q \rbrack$ then $\cf_1 \cup \dots \cup \cf_q$ internally $q\epsilon$-approximates $\cg_1 \cup \dots \cup \cg_q$. \end{remark}

Theorem \ref{main approximation theorem} concerns arbitrary sets of mod-$p$ conditions. We shall begin by proving it under a strong additional hypothesis concerning those conditions, and then we shall progressively weaken this hypothesis until we end up proving the result in full generality. At each stage, if we have a proof for families $\cf$ of conditions that satisfy an additional property $P$ and we would like a proof for families $\ce$ of conditions that satisfy a weaker property $Q$, it is sufficient to prove that every family $\ce$ that satisfies $Q$ can be approximated by a family $\cf_1\cup\dots\cup\cf_q$ such that $q$ is bounded and each $\cf_i$ satisfies $P$. Then using the remark above and the result for families that satisfy $P$, we end up with a bounded number of conditions that approximate $\ce$. 

Our first additional hypothesis will be that the set of forms is a symmetric sunflower and that all the subsets of $\F_p$ are the same. Having proved the result under that hypothesis, we shall then weaken the hypothesis by no longer insisting that the set is a sunflower, but keeping the symmetry and the assumption that the subsets of $\F_p$ are the same and adding the condition that the set of forms has bounded radius. Finally, we shall prove the result in full generality. These stages will be the contents of Proposition \ref{Compression of symmetric sunflowers}, Proposition \ref{Compression of symmetric balls}, and Theorem \ref{main approximation theorem} respectively.

Given a linear form $\phi:\F_p^n\to\F_p$ with coefficients $(\lambda_1,\dots,\lambda_n)$, let $D_\phi:\F_p\to\N$ be defined by $D_\phi(y)=|\{i\in[n]:\lambda_i=y\}|$. We shall call $D_\phi$ the \emph{coefficient distribution} of $\phi$. Note that a symmetric family of forms is one where all the forms have the same coefficient distribution.

We begin, then, by proving the result for symmetric sunflowers and identical subsets of $\F_p$. This is a generalization of the example we gave earlier where the forms were $x\mapsto x_1+x_i$, and as with that example it will turn out that as long as there are enough forms, we can $\e$-approximate the set of conditions by a set of the form $(\phi_0,H)$, where $\phi_0$ is the centre of the sunflower.

\begin{proposition} \label{Compression of symmetric sunflowers} Let $p$ be a prime, let $S$ be a subset of $\F_p$ that contains at least two elements, let $\e>0$, let $\{\phi_0+\phi_i:i\in I\}$ be a symmetric sunflower, and let $E\subset\F_p$. Then there exists $H\subset\F_p$ such that the condition $(\phi_0,H)$ $p(1-2^{-(p-1)})^{|I|}$-approximates the set of conditions $(\phi_0+\phi_i,E)$.
\end{proposition}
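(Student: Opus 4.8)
The plan is to reduce to the $k=1$ situation by exploiting the sunflower and symmetry structure, using the equidistribution and lower-bound tools from Section 2. Write each form as $\phi_0+\phi_i$ with the $\phi_i$ disjointly supported and all having a common coefficient distribution $D$; let $r=|Z(\phi_i)|$ be the common support size (if $r=0$ then all the forms equal $\phi_0$ and a single condition $(\phi_0,E)$ works exactly, so assume $r\geq 1$). The natural candidate for $H$ is the set of $y\in\F_p$ for which the point $x$, conditioned on $\phi_0(x)=y$ and drawn uniformly from $S^n$, has a high conditional probability of satisfying \emph{all} the conditions $\phi_0(x)+\phi_i(x)\in E$; equivalently, $H=\{y:\phi_0(x)=y\text{ with }x\in S^n\text{ and all }\phi_i(x)\in E-y\text{ has probability close to }\P[\phi_0(x)=y]\}$. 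Since we want an \emph{internal} approximation, we must make sure $(\phi_0,H)$'s satisfying set sits inside $V$: for this we take $H$ to be the set of $y$ such that \emph{every} $x\in S^n$ with $\phi_0(x)=y$ already satisfies all the conditions (so the conditional failure probability is exactly $0$), together with the $y$ not taken at all by $\phi_0$ on $S^n$ — vacuously fine. Then automatically $U\subset V$, and $V\setminus U=\{x\in S^n:\phi_0(x)\notin H,\ \phi_i(x)\in E-\phi_0(x)\ \forall i\}$.

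The heart of the argument is to bound the density of $V\setminus U$. Fix $y\notin H$ that is taken by $\phi_0$ on $S^n$; by definition of $H$ there is at least one $x^{(y)}\in S^n$ with $\phi_0(x^{(y)})=y$ that fails some condition, say $\phi_{i(y)}(x^{(y)})\notin E-y$. The key point is that because the $\phi_i$ are disjointly supported (and each has support size $r\geq 1$), the random variables $\phi_i(x)$ are \emph{independent} conditioned on the value of $\phi_0(x)$ — more precisely, after conditioning on the coordinates in $Z(\phi_0)$, the values $\phi_1(x),\dots$ depend on disjoint blocks of coordinates. I would then argue: conditioned on $\phi_0(x)=y$, for each $i$ the event $\phi_i(x)\notin E-y$ is either impossible or has probability at least $|S|^{-t}\geq 2^{-(p-1)}$ by Proposition \ref{Lower bound on non-zero probabilities} applied to the form $\phi_i$ restricted to its support block (note $|S|\geq 2$ forces $t=\lceil(p-1)/(|S|-1)\rceil\leq p-1$). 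Since $\phi_{i(y)}$ witnesses that it is \emph{not} impossible, and since $\phi_0$ partitions $S^n$ into fibers, the conditional probability that $x$ lies in $V$ given $\phi_0(x)=y$ is at most $(1-2^{-(p-1)})^{|I|}$ — here I use independence across the $|I|$ disjoint blocks and the fact that, by symmetry (common coefficient distribution), the "failure" option is available in every block once it is available in one, because whether $0\in E-y$ etc. depends only on $y$ and the coefficient distribution, not on $i$. Summing over the at most $p$ values $y\notin H$ gives $|V\setminus U|/|S^n|\leq p(1-2^{-(p-1)})^{|I|}$, as desired.

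The main obstacle I anticipate is the last symmetry point: I must be careful that "some $x$ in the fiber over $y$ fails some condition $i(y)$" actually implies that the failure event has positive conditional probability \emph{in every block} $i\in I$, not just in block $i(y)$. This is exactly where the hypotheses that the sunflower is \emph{symmetric} and that the sets $E_i$ are all equal to a single $E$ are used: the conditional distribution of $\phi_i(x)$ given $\phi_0(x)=y$ is the same for every $i$ (it is the distribution of $\sum_j\mu_j x_j$ over $x_j\in S$, where $(\mu_j)$ is the common coefficient list of the $\phi_i$, possibly shifted in a $y$-dependent but $i$-independent way coming from the overlap pattern of $Z(\phi_0)$ with the blocks — one should check the blocks can be taken disjoint from $Z(\phi_0)$ or handle the overlap by further conditioning). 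Hence the event $\{\phi_i(x)\notin E-y\}$ has the same conditional probability for all $i$, and that probability is positive iff it is positive for $i(y)$. Granting this, independence across disjoint blocks finishes the estimate. A secondary, purely bookkeeping obstacle is making the definition of $H$ precise enough that both $U\subset V$ and the counting bound hold simultaneously; the formulation above (put $y\in H$ iff the entire fiber over $y$ lies in $V$) handles both cleanly.
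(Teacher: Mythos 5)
There is a genuine gap at the heart of your density estimate: both the conditional independence claim and the symmetry-transfer claim fail once $Z(\phi_0)$ overlaps the petals' supports, and the symmetry hypothesis does nothing to control that overlap. The definition of a symmetric sunflower only constrains the coefficient distribution of the petals $\phi_i$, not how the coefficients of $\phi_0$ sit on each petal's support. For a concrete counterexample to your "same conditional distribution, possibly shifted in an $i$-independent way" claim, take $\phi_0=x_1$, $\phi_1=x_1+x_2$, $\phi_2=x_3+x_4$ (a symmetric sunflower: disjointly supported petals with identical coefficient distribution): conditioned on $\phi_0(x)=y$, the value $\phi_1(x)=y+x_2$ ranges over $y+S$ while $\phi_2(x)=x_3+x_4$ ranges over $S+S$, so failure can be impossible in one block and possible in another, and your conclusion "failure is available in every block once it is available in one" breaks, leaving you with only a single factor $(1-2^{-(p-1)})$ instead of $|I|$ of them. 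Conditional independence fails too: with $\phi_0=x_1+x_2$, $\phi_1=x_1$, $\phi_2=x_2$ (again a symmetric sunflower), conditioning on $\phi_0(x)=y$ makes $\phi_1(x)$ and $\phi_2(x)$ perfectly correlated. You flagged the overlap issue yourself, but "handle the overlap by further conditioning" does not rescue the argument in examples like this last one, where the petals live entirely inside $Z(\phi_0)$.

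The paper's proof sidesteps conditioning altogether, and the same move repairs your argument. Define $H$ via the common \emph{unconditional} value set $V$ of the petals, $H=\{t: V+t\subset E\}$ (or keep your fibre-wise $H$, which contains this one), and for each bad $t$ simply bound
\[\P\bigl[\phi_0(x)=t\ \wedge\ \forall i\ \phi_i(x)\in E-t\bigr]\ \le\ \P\bigl[\forall i\ \phi_i(x)\in E-t\bigr]\ =\ \prod_{i\in I}\P\bigl[\phi_i(x)\in E-t\bigr],\]
where the factorization now uses only the \emph{unconditional} independence of the disjointly supported petals. For $t\notin H$ some value outside $E-t$ is attained by one petal on $S^n$, hence by every petal (symmetry of coefficient distributions now genuinely applies, since no conditioning is involved), so each factor is at most $1-2^{-(p-1)}$ by Proposition \ref{Lower bound on non-zero probabilities}, and summing over the at most $p$ bad values of $t$ gives the stated bound. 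As written, though, your core estimate rests on two claims that are false in general.
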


\begin{proof}

Let $V\subset\F_p$ be the set of all possible values taken by $\phi_i(x)$ when $x\in S^n$, and note that by the symmetry condition $V$ is the same for all $i\in I$. We let $H$ be the set $\{t \in \F_p: V+t\subset E\}$ and we let $T$ be the complement of $H$. If $x\in S^n$ and $\phi_0(x)\in H$, then by the definition of $H$, $\phi_0(x)+\phi_i(x)\in E$. This establishes that the satisfying set for the single condition $(\phi_0,H)$ is contained in the satisfying set for the conditions $(\phi_0+\phi_i,E)$, so it remains to obtain an upper bound for the probability that $\phi_0(x)\in T$ and $\phi_0(x)+\phi_i(x)\in E$ for every $i\in I$.

Let $t\in T$. Then 
\[\P[\phi_0(x)=t\ \wedge\ \forall i\in I\ \phi_0(x)+\phi_i(x)\in E]\leq\P[\forall i\in I\ \phi_i(x)\in E-t]=\prod_{i\in I}\P[\phi_i(x)\in E-t],\]
where the last equality follows from the fact that the $\phi_i$ are disjointly supported and hence the events $\phi_i(x)\in E-t$ are independent (when $x$ is chosen uniformly at random from $S^n$).

Since $V+t$ is not contained in $E$, the probability that $\phi_i(x)\notin E-t$ is non-zero, and therefore by Lemma \ref{Lower bound on non-zero probabilities} it is at least $2^{-(p-1)}$. It follows that the product above is at most $(1-2^{-(p-1)})^{|I|}$. There are at most $p$ possibilities for $t$, so the result follows.
\end{proof}

The proposition above implies that if the set of forms is a symmetric sunflower, then either it is already a family of bounded size, or it can be approximated by a family of size~1.

\begin{proposition}\label{Compression of symmetric balls} For every prime $p$, every non-negative integer $r$ and every $\epsilon>0$ there is a constant $A=A(p,r,\e)$ such that for every pair of subsets $S$ and $E$ of $\F_p$ with $|S|\geq 2$ and every symmetric family $\{\phi_0+\phi_i:i\in I\}$ contained in the ball about $\phi_0$ of radius $r$, the conditions $(\phi_0+\phi_i,E)$ on $S^n$ can be $\e$-approximated by a set of at most $A$ mod-$p$ conditions.
\end{proposition}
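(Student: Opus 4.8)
The plan is to prove Proposition \ref{Compression of symmetric balls} by induction on $r$, repeatedly peeling symmetric sunflowers off the family; the point is that the centre of such a sunflower has support strictly smaller than the common support size of its petals, so it lies in the ball of radius $r-1$ about $\phi_0$. Write $\psi_i=\phi_0+\phi_i$, so $\{\psi_i:i\in I\}$ is symmetric and $|Z(\phi_i)|\le r$; we may assume the $\psi_i$ (hence the $\phi_i$) are pairwise distinct, since repeated conditions may be deleted. The hypothesis tells us that the $\psi_i$ share a coefficient distribution, not that the $\phi_i$ do, so as a preliminary step we partition $I$ according to the multiset of nonzero coefficients of $\phi_i$ (a multiset of size $\le r$ over $\F_p^\ast$, hence one of at most $C_0=C_0(p,r)$ possibilities): $I=I_1\sqcup\dots\sqcup I_L$ with $L\le C_0$, and within each $I_\ell$ \emph{both} $\{\psi_i:i\in I_\ell\}$ and $\{\phi_i:i\in I_\ell\}$ are symmetric, the latter with a common support size $s_\ell\le r$. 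By Remark \ref{union remark} it suffices to internally $(\e/L)$-approximate each block $\cf_\ell=\{(\psi_i,E):i\in I_\ell\}$. If $s_\ell\le 1$ the $\phi_i$ in $I_\ell$ have a common single nonzero coefficient and so, being distinct, disjoint supports, so the forms of $\cf_\ell$ already constitute a symmetric sunflower with centre $\phi_0$ and Proposition \ref{Compression of symmetric sunflowers} applies; this (and the trivial $s_\ell=0$ case) handles the base case $r\le 1$. So fix $\ell$ with $2\le s_\ell=:s\le r$, write $I:=I_\ell$, assume Proposition \ref{Compression of symmetric balls} for radius $r-1$, fix a threshold $N=N(p,r,\e)$, and assume $|I|>N$ (otherwise keep all conditions).

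Now iterate the following from $I^{(0)}=I$. Given a block indexed by $I^{(t)}$ with $|I^{(t)}|>N$, apply the Erd\H{o}s--Rado sunflower lemma to the $s$-element sets $\{Z(\phi_i):i\in I^{(t)}\}$ to obtain a combinatorial sunflower of size at least $h(|I^{(t)}|)$, where $h(m)\gtrsim m^{1/r}\to\infty$; then pigeonhole over the at most $(p-1)^r$ possible restrictions of the $\phi_i$ to the common core $Y$, passing to a sub-sunflower $J_t$ of size at least $h(|I^{(t)}|)/(p-1)^r$ on which the $\phi_i$ all restrict to a common form $\rho_t$ on $Y$. For $N$ large, $|J_t|$ exceeds the number of distinct forms with a given support of size $s$ and the prescribed coefficient multiset, so the supports $Z(\phi_i)$, $i\in J_t$, are not all equal, $Y$ is proper in each of them, and $|Z(\rho_t)|=|Y|\le s-1\le r-1$. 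Writing $\phi_i=\rho_t+\phi_i'$ with $\phi_i'$ supported on the pairwise disjoint petals, the $\phi_i'$ ($i\in J_t$) form a symmetric family (the petals share a size, and after removing the fixed $\rho_t$ the coefficient distributions agree), so $\{\psi_i:i\in J_t\}=\{(\phi_0+\rho_t)+\phi_i':i\in J_t\}$ is a symmetric sunflower with centre $\phi_0+\rho_t$, which lies in the ball of radius $r-1$ about $\phi_0$. By Proposition \ref{Compression of symmetric sunflowers} there is $H_t\subseteq\F_p$ with $(\phi_0+\rho_t,H_t)$ internally $\delta_t$-approximating $\{(\psi_i,E):i\in J_t\}$, where $\delta_t=p(1-2^{-(p-1)})^{|J_t|}$. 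Put $I^{(t+1)}=I^{(t)}\setminus J_t$, continue until $|I^{(T)}|\le N$, and set $\cg_0=\{(\phi_0+\rho_t,H_t):t<T\}\cup\{(\psi_i,E):i\in I^{(T)}\}$. Since $I=I^{(T)}\sqcup\bigsqcup_{t<T}J_t$ and each $(\phi_0+\rho_t,H_t)$ has satisfying set inside that of the block on $J_t$, the satisfying set of $\cg_0$ lies inside that of $\cf_\ell$, and (as in Remark \ref{union remark}) the difference has density at most $\sum_{t<T}\delta_t$ in $S^n$.

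The main obstacle is to bound $\sum_{t<T}\delta_t$ by $\e/(2L)$, since $T$ itself is in general unbounded. This is precisely why at each step we should extract the \emph{largest} available sunflower: then $|J_t|\gtrsim|I^{(t)}|^{1/r}/(p-1)^r$, so, grouping the steps by the dyadic size of $|I^{(t)}|$, there are $\lesssim 2^{j(1-1/r)}$ steps with $|I^{(t)}|\in[2^j,2^{j+1})$, each contributing at most $p(1-2^{-(p-1)})^{c\,2^{j/r}}$; the series $\sum_j 2^{j(1-1/r)}(1-2^{-(p-1)})^{c\,2^{j/r}}$ converges, so choosing $N=2^{j_0}$ with $j_0=j_0(p,r,\e)$ large kills its tail and gives $\sum_{t<T}\delta_t\le\e/(2L)$, whence $\cg_0$ internally $(\e/(2L))$-approximates $\cf_\ell$. (It seems essential to recurse on $r$ here rather than split $\cf_\ell$ directly into a bounded number of symmetric sunflowers: the family $\{x\mapsto x_a+x_b:1\le a<b\le n\}$ admits no such bounded splitting, even though its satisfying set is always easy to approximate.)

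Finally, compress the $T$ conditions $(\phi_0+\rho_t,H_t)$. Each $\rho_t$ is supported on at most $r-1$ coordinates, so the coefficient distribution of $\phi_0+\rho_t$ is one of at most $C_1=C_1(p,r)$ possibilities, and $H_t$ is one of at most $2^p$ subsets of $\F_p$; partitioning $\{0,\dots,T-1\}$ by this data yields at most $q_1=C_1\cdot 2^p$ classes, each a symmetric family with a common subset, contained in the ball of radius $r-1$ about $\phi_0$. By the inductive hypothesis, with $\e_1:=\e/(2L(q_1+1))$, each class is internally $\e_1$-approximated by at most $A(p,r-1,\e_1)$ conditions; applying Remark \ref{union remark} to the $q_1$ compressions together with the untouched block $\{(\psi_i,E):i\in I^{(T)}\}$, their union internally $((q_1+1)\e_1)$-approximates $\cg_0$, hence (by transitivity of internal approximation) internally $(\e/L)$-approximates $\cf_\ell$, using at most $q_1A(p,r-1,\e_1)+N$ conditions. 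Summing over the $L$ blocks via Remark \ref{union remark} once more proves the proposition, with a finite bound $A(p,r,\e)$ depending only on $p$, $r$ and $\e$ (namely in terms of $C_0(p,r)$, $C_1(p,r)$, $p$, $N(p,r,\e)$ and $A(p,r-1,\e_1)$).
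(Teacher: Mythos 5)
Your proof is correct, but it takes a genuinely different route from the paper's. Both arguments induct on the radius $r$ and use Proposition \ref{Compression of symmetric sunflowers} as the engine, but the decompositions differ. The paper takes a maximal disjointly-supported subfamily $M$: if $M$ is large, the single condition $(\phi_0,H)$ produced from the sunflower on $M$ is shown, \emph{via the symmetry of the whole family}, to imply every condition in $I$, so one condition suffices; if $M$ is small, every support meets the bounded set $Z=\bigcup_{i\in M}Z(\phi_i)$, and absorbing one coordinate-value pair $(j,y)$ into the centre splits $I$ into a bounded number of radius-$(r-1)$ families, giving a recursion with bounded branching and the explicit bound $A(p,r,\e)\leq (2p)^{2r^2p}(\log\e^{-1})^r$. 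You instead avoid the ``one condition implies all'' observation entirely: you repeatedly strip off large sunflowers (with varying cores $\rho_t$ of support at most $r-1$) found via the Erd\H{o}s--Rado lemma plus a pigeonhole on core restrictions, compress each to a single centre-condition $(\phi_0+\rho_t,H_t)$, control the accumulated error by the dyadic summability argument (each stripped sunflower has size $\gtrsim|I^{(t)}|^{1/r}$, so the geometric-type series converges and its tail is killed by the threshold $N$), and then compress the unboundedly many centre-conditions by the inductive hypothesis at radius $r-1$ after a bounded partition by coefficient distribution and by $H_t$. What the paper's route buys is a shorter recursion and cleaner quantitative bounds; what yours buys is independence from the global implication step (and, since you re-symmetrize by partitioning on the coefficient multiset of the $\phi_i$, robustness to the two possible readings of ``symmetric'' in the hypothesis), at the price of the error-summation bookkeeping. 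One small point to tighten: within a block, distinct forms can share a support (at most $s!\leq r!$ of them), so in the Erd\H{o}s--Rado step you should select one representative form per support of the extracted combinatorial sunflower before pigeonholing on the core; otherwise the petals $\phi_i'$ need not be disjointly supported. This only changes your constants and is absorbed by your choice of $N$, so it is a presentational fix rather than a gap.
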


\begin{proof} We prove the result by induction on $r$. The proof will be similar to the proof by Erd\H os and Rado of the sunflower lemma (that is, their argument that gives a factorial-type bound for the sunflower conjecture). When $r=0$, the ball of radius $r$ about $\phi_0$ consists of $\phi_0$ only, so the result holds with $A(p,0,\e)=1$. Now let $r \ge 1$ and let us assume that the result holds up to $r-1$. Let $M$ be a maximal subset of $I$ such that the forms $\phi_i$ with $i \in M$ have disjoint supports, and let $Z=\bigcup_{i\in I'}Z(\phi_i)$ be the union of those supports.

If $|M| \ge 2^{p-1} \log (p \epsilon^{-1})$ then by Proposition \ref{Compression of symmetric sunflowers} there exists a subset $H \subset \F_p$ such that the condition $(\phi_0, H)$ $\epsilon$-approximates the conditions $(\phi_0+\phi_i,E)$ with $i\in M$. From the proof of Proposition \ref{Compression of symmetric sunflowers} and the symmetry condition, $(\phi_0, H)$ implies every condition $(\phi_0+\phi_i,E)$ with $i \in I$, so it follows that the condition $(\phi_0,H)$ $\e$-approximates the conditions $(\phi_0+\phi_i,E)$ with $i\in I$.

For each $i$, let the coefficients of $\phi_i$ be $(\lambda_{i1},\dots,\lambda_{in})$. If instead $|M| \le 2^{p-1} \log (p \epsilon^{-1})$ then for each $i \in I$ the support of $\phi_i$ intersects $Z$, so there is some $j \in Z$ and some non-zero $y \in \F_p$ such that $\lambda_{ij} = y$. That is, if we set $I_{j,y}$ to be the set of $i\in I$ such that $\lambda_{ij}=y$, then $I=\bigcup_{y\in\F_p^*}\bigcup_{j\in Z}I_{j,y}$.

Now let us fix $y$ and $j$ and consider the system of forms $\{\phi_0+\phi_i:i\in I_{j,y}\}$. Since each $\lambda_{ij}$ with $i\in I_{j,y}$ is equal to $y$, we can obtain the same system of forms by replacing each $\lambda_{ij}$ with $i\in I_{j,y}$ by $\lambda_{ij}-y$ and replacing $\lambda_{0j}$ by $\lambda_{0j}+y$. Since for each $\phi_i$ with $i\in I_{j,y}$ the support size has been reduced by 1 and the coefficient we have removed is $y$, the result is a symmetric family contained in a ball of radius $r-1$ about the modification of $\phi_0$. 

By the inductive hypothesis, we can therefore $\e/(2^{p-1}pr\log (p \epsilon^{-1}))$-approximate each family $\{\phi_0+\phi_i:i\in I_{j,y}\}$ by a family of at most $A(p,r-1,\e/(2^{p-1}pr\log (p \epsilon^{-1}))$ mod-$p$ conditions. By Remark \ref{union remark} and the fact that $|\F_p^*\times Z|\leq 2^{p-1}pr\log(p\e^{-1})$, it follows that we can $\e$-approximate the family $\{\phi_0+\phi_i:i\in I\}$ by a family of at most 
\[A(p,r,\e)=2^{p-1}pr\log (p \epsilon^{-1})A(p,r-1,\e/(2^{p-1}pr\log (p \epsilon^{-1}))\] 
mod-$p$ conditions.
\end{proof}

A back-of-envelope calculation shows that $A(p,r,\e)$ is at most $(2p)^{2r^2 p}(\log(\e^{-1}))^r$. (We have not attempted to optimize this bound.)

We have now shown that symmetric families of bounded radius can be approximated. It remains to prove that an arbitrary family can be approximated by a union of boundedly many symmetric families of bounded radius. That will be the content of the rest of the argument in this section.

\begin{proof}[Proof of Theorem \ref{main approximation theorem}] Let the set of conditions to be approximated be $\{(\phi_i,E_i):i\in I\}$. We may assume that each $E_i$ is a proper subset of $\F_p$. Let $r = 2p^3 (\log p) \log (2/\e)$.
\medskip

\noindent \textbf{Case 1.} Suppose that there exists a subset $I'$ of $I$ of size at least $p\log(2/\e)$ such that the linear forms $\phi_i$ with $i \in I$ are $r$-separated. Pick such a set that also satisfies the upper bound $|I'|\leq 2p\log(2/\e)$. By Proposition \ref{Equidistribution of k-tuples of linear forms} and our choice of $r$,
\[\P[\forall i \in I', \phi_i=y_i] \le p^{-|I'|}+(1-p^{-2})^{r}\leq 2p^{-|I'|}\]
for each $y\in \F_p^{I'}$.

It follows that 
\[\P[\forall i \in I'\ \phi_i \in E_i] \le (\prod_{i \in I'} |E_i|) (2p^{-|I'|}) \le 2 (1-p^{-1})^{|I'|} \le \e\] 
so the set of conditions $\{(\phi_i,E_i): i \in I\}$ is $\e$-approximated by the condition $(x_1, S^c)$ (or the condition $(\phi,\emptyset)$ for any form $\phi$). 
\medskip

\noindent \textbf{Case 2.} Now suppose that there does not exist such a subset $I'$. Let $M$ be a maximal $r$-separated subset of $I$. Then by hypothesis $M$ has size at most $p \log (2/\e)$, and for every $i \in I$ there exists $a \in \F^M$ such that $\phi_{i} - \sum_{m \in M} a_m \phi_m$ has support size at most $r$. 

As in the proof of Proposition \ref{Compression of symmetric balls}, we now divide the set of conditions into families. This time, given a subset $E\subset\F_p$, an element $a\in\F^M$, and a function $D:\F_p\to\N$ such that $\sum_yD(y)=n$ and $\sum_{y\ne 0}D(y)\leq r$, we let $I_{E,a,D}$ be the set of $i$ such that $E_i=E$, and $\phi_i-\sum_{m\in M}a_m\phi_m$ has coefficient distribution  $D$. There are at most $2^p$ choices of $E$, at most $p^M$ choices of $a$, and at most $r^p$ choices of $D:\F_p\to\N$ such that $\sum_yD(y)=n$ and $\sum_{y\ne 0}D(y)\leq r$, so the number of families $I_{E,a,D}$ we need to approximate is bounded independently of $n$. Also, the forms in each family are a symmetric family of radius at most $r$ and within each family the subsets of $\F_p$ are the same.

Therefore, by Proposition \ref{Compression of symmetric balls} we can $(2^p p^{|M|} r^p)^{-1} \epsilon$-approximate each set of conditions $\{(\phi_i,E_i): i \in I_{E,a,D}\}$ by a set of at most $A(p,r, (2^p p^{|M|} r^p)^{-1} \epsilon)$ conditions. The number of triples $(E,a,D)$ is at most 
\[2^p p^{|M|} r^p \le 2^p p^{p \log (2/\e)} (2p^3 (\log p) \log (2/\e))^p\leq p^{4p\log(2/\e)},\]
so the result now follows by Remark \ref{union remark}, and the number of conditions needed for the approximation is at most
\[p^{4p\log(2/\e)}A(p,2p^3\log p\log(2/\e),p^{-4p\log(2/\e)}\e).\]
After a back-of-envelope calculation we can bound this above by $2^{16p^8\log(2/\e)^2}$. Thus, for fixed $p$ the number of conditions needed for the approximation depends quasipolynomially on $1/\e$.
\end{proof}

\section{More general finite Abelian groups}

In this and the next section we shall prove Theorem \ref{Compression in the groups case for arbitrary alphabets}. This needs extra ideas, but it turns out that those ideas are already needed for the simpler case where $S=G$, so in this section we shall prove that special case, and then in the next section we shall combine the proof method of this section with the ideas about sunflowers from the previous section in order to obtain a proof for general $S$. 

The following definition will be central to the proof.

\begin{definition} Given homomorphisms $\phi_1,\dots,\phi_k:G^b\to H$ and characters $\chi_1,\dots,\chi_k\in\hat H$, the map $x\mapsto\prod_{i=1}^k\chi_i(\phi_i(x))$ defines a character on $G$. Let us call $\phi_1,\dots,\phi_k$ \emph{independent} if the only way for $\prod_i\chi_i\circ\phi_i$ to be the trivial character is if all the $\chi_i$ are the trivial character. 
\end{definition}

\begin{proposition} \label{independence}
Let $\phi_1,\dots,\phi_k$ be independent homomorphisms from $G^n$ to $H$. Then for every $y\in H$, 
\[\P_{x\in G^n}[\forall i\ \phi_i(x)=y_i]=|H|^{-k}.\]
\end{proposition}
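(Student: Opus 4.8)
The plan is to use Fourier analysis on the finite Abelian group $H^k$, exactly mirroring the structure of the proof of Proposition~\ref{Equidistribution of k-tuples of linear forms}. Write $\Phi=(\phi_1,\dots,\phi_k):G^n\to H^k$ for the combined homomorphism, and for $y\in H^k$ let $\delta_y$ be the indicator of the point $y$. Then
\[
\P_{x\in G^n}[\Phi(x)=y]=\E_{x\in G^n}\delta_y(\Phi(x)),
\]
and Fourier inversion on $H^k$ expands $\delta_y$ in terms of the characters $\chi=(\chi_1,\dots,\chi_k)\in\widehat{H^k}\cong\widehat H^{\,k}$, with coefficients of modulus $|H|^{-k}$. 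Interchanging the sum and the expectation, the whole quantity becomes $\sum_{\chi}\widehat{\delta_y}(\chi)\,\E_{x\in G^n}\prod_{i=1}^k\chi_i(\phi_i(x))$.

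The key step is then the dichotomy for the inner expectation $\E_{x\in G^n}\prod_i\chi_i(\phi_i(x))$. The map $x\mapsto\prod_i\chi_i(\phi_i(x))$ is a character on $G^n$, so this expectation is $1$ if that character is trivial and $0$ otherwise: the average of a nontrivial character over a finite group vanishes. By the hypothesis that $\phi_1,\dots,\phi_k$ are independent, the character $\prod_i\chi_i\circ\phi_i$ is trivial precisely when every $\chi_i$ is trivial, i.e.\ when $\chi$ is the trivial character of $H^k$. Hence the only surviving term in the sum is $\chi=\mathbf 1$, contributing $\widehat{\delta_y}(\mathbf 1)=|H|^{-k}$, which gives the claimed identity.

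I should be slightly careful about one small point: the definition of independence in the excerpt is phrased for homomorphisms $G^b\to H$ and refers to the character $x\mapsto\prod_i\chi_i(\phi_i(x))$ being a character ``on $G$'' — here one reads this with $b=n$ and the character living on $G^n$, and the same argument applies verbatim. I would also note explicitly that characters of $H^k$ are exactly the tuples $(\chi_1,\dots,\chi_k)$ of characters of $H$, evaluated coordinatewise, so that ``$\chi$ trivial'' and ``all $\chi_i$ trivial'' are genuinely the same condition; this is the only place where the product structure of $H^k$ is used.

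The main obstacle is essentially bookkeeping rather than substance: one must make sure that the notion of ``independent'' as defined is being applied to the correct character (the one over $G^n$, for the given $n$) and that the Fourier coefficient of a point mass has constant modulus $|H|^{-k}$ on $\widehat{H^k}$. There is no estimation to do — unlike Proposition~\ref{Equidistribution of k-tuples of linear forms}, where $S\subsetneq\F_p$ forces an error term $(1-p^{-2})^r$, here the average is over all of $G^n$, so each nontrivial character contributes exactly $0$ and the identity is exact.
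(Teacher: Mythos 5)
Your proof is correct and is essentially the paper's argument: both expand the indicator of the event $\{\phi_i(x)=y_i\ \forall i\}$ over the characters of $H^k$ (equivalently, average $\prod_i\chi_i(\phi_i(x)-y_i)$ over $\chi_1,\dots,\chi_k\in\hat H$), then use that the average over $x\in G^n$ of the character $\prod_i\chi_i\circ\phi_i$ vanishes unless it is trivial, which by the independence hypothesis happens only when all $\chi_i$ are trivial, leaving the single term $|H|^{-k}$.
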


\begin{proof}
The probability in question is equal to
\[\E_x\E_{\chi_1,\dots,\chi_k\in\hat H}\prod_{i=1}^k\chi_i(\phi_i(x)-y_i)\]
since if $\phi_i(x)=y_i$ for each $i$, then the inner expectation is equal to 1, and if for some $i$ we have $\phi_i(x)\ne y_i$, then $\E_{\chi_i}\chi_i(\phi_i(x)-y_i)=0$, so the whole expectation is 0.

We can rewrite this expression as
\[\E_{\chi_1,\dots,\chi_k\in\hat H}\prod_{i=1}^k\chi_i(-y_i)\E_x\prod_{i=1}^k\chi_i(\phi_i(x)).\]
If $\chi_1,\dots,\chi_k$ are all equal to the trivial character, then the expression being averaged over is equal to 1. Otherwise, by hypothesis, $\prod_i\chi_i\circ\phi_i$ is not the trivial character on $G^n$, which implies that $\E_x\prod_i\chi_i\circ\phi_i=0$. Therefore, the probability is $|\hat H|^{-k}=|H|^{-k}$. 
\end{proof}

The next definition and lemma are standard, but we recall them here for convenience. 

\begin{definition}
Given an Abelian group $G$, a subgroup $H\subset G$, and a character $\chi\in\hat G$, say that $\chi$ \emph{annihilates} $H$ if $\chi(x)=1$ for every $x\in H$. The \emph{annihilator} of $H$, denoted $H^\perp$, is the subgroup of $\hat G$ that consists of all $\chi$ that annihilate $H$. Similarly, if $\hat H$ is a subgroup of $\hat G$, we say that $x\in G$ \emph{annihilates} $\hat H$ if $\chi(x)=1$ for every $\chi\in\hat H$, and the \emph{annihilator} of $\hat H$ is the subgroup of all $x\in G$ that annihilate $\hat H$. 
\end{definition}

\begin{lemma}\label{isomorphism}
Let $G$ be a finite Abelian group and let $H$ be a subgroup of $G$. Then $H^\perp\cong\widehat{G/H}$.
\end{lemma}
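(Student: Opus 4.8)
The plan is to exhibit an explicit isomorphism via the canonical projection. Let $\pi\colon G\to G/H$ be the quotient homomorphism, and define a map $\Phi\colon\widehat{G/H}\to\hat G$ by $\Phi(\psi)=\psi\circ\pi$. First I would check that $\Phi$ is a group homomorphism, which is immediate: for $\psi_1,\psi_2\in\widehat{G/H}$ and $x\in G$ we have $(\psi_1\psi_2)(\pi(x))=\psi_1(\pi(x))\psi_2(\pi(x))$, so $\Phi(\psi_1\psi_2)=\Phi(\psi_1)\Phi(\psi_2)$.

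Next I would verify that the image of $\Phi$ lies in $H^\perp$. If $x\in H$, then $\pi(x)$ is the identity of $G/H$, so $\Phi(\psi)(x)=\psi(\pi(x))=1$; hence $\Phi(\psi)$ annihilates $H$, that is, $\Phi(\psi)\in H^\perp$. It then remains to check that $\Phi$, viewed as a map $\widehat{G/H}\to H^\perp$, is a bijection. For injectivity: if $\Phi(\psi)$ is the trivial character of $G$, then $\psi(\pi(x))=1$ for all $x\in G$, and since $\pi$ is surjective this forces $\psi$ to be the trivial character of $G/H$. For surjectivity: given $\chi\in H^\perp$, the hypothesis that $\chi$ is trivial on $H$ means $\chi$ is constant on each coset $x+H$, so the formula $\bar\chi(x+H)=\chi(x)$ defines a function on $G/H$; it is a homomorphism because $\chi$ is, and by construction $\Phi(\bar\chi)=\bar\chi\circ\pi=\chi$. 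Therefore $\Phi$ restricts to an isomorphism $\widehat{G/H}\to H^\perp$, as claimed.

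I do not expect a genuine obstacle here: the only point requiring a moment's care is the well-definedness of $\bar\chi$, and that is exactly the content of the assumption $\chi\in H^\perp$. (If one preferred to avoid the explicit map, one could instead observe that a character of $G$ is trivial on $H$ if and only if it factors through $G/H$, and the factorization is unique since $\pi$ is onto; this gives a bijection between $H^\perp$ and $\widehat{G/H}$ directly. The explicit homomorphism above is just as short and additionally records that the bijection respects the group structures.)
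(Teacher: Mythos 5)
Your proof is correct and is essentially the same argument as the paper's: the paper constructs the two maps $\chi\mapsto(xH\mapsto\chi(x))$ and $\psi\mapsto\psi\circ\pi$ and checks they are mutually inverse homomorphisms, which is exactly the content of your homomorphism, injectivity, and surjectivity verifications (your $\bar\chi$ in the surjectivity step is the paper's map $\iota$). The only difference is organizational, namely which direction of the correspondence is taken as the primary map.
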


\begin{proof}
Define a map $\iota:H^\perp\to\widehat{G/H}$ by $\iota(\chi)(xH)=\chi(x)$. Note that this is well defined, since if $xy^{-1}\in H$, then $\chi(y)=\chi((xy^{-1})y)=\chi(x)$. It is also a homomorphism, since 
\[\iota(\chi_1\chi_2)(xH)=\chi_1\chi_2(x)=\chi_1(x)\chi_2(x)=\iota(\chi_1)(xH)\iota(\chi_2)(xH)\]
for every $x\in G$. 

Now let $\psi:G/H\to\C$ be a character. Define $\theta(\psi):G\to\C$ by $\theta(\psi)(x)=\psi(xH)$. Then $\theta(\psi)$ is a character on $G$, since
\[\theta(\psi)(xy)=\psi(xyH)=\psi(xHyH)=\psi(xH)\psi(yH)=\theta(\psi)(x)\theta(\psi)(y).\]
It also annihilates $H$, since if $x\in H$, then $\theta(\psi)(x)=\psi(H)=1$. Finally, for any character $\chi$ that annihilates $H$, we have that $\theta\iota\chi(x)=\iota\chi(xH)=\chi(x)$ for every $x$, so $\theta$ is a left inverse for $\iota$. In the other direction, if $\psi:G/H\to\C$ is a character, then 
$\iota\theta\psi(xH)=\theta\psi(x)=\psi(xH)$, so $\theta$ is also a right inverse for $\iota$. 
\end{proof}

\begin{proposition} \label{annihilator}
Let $\phi_1,\dots,\phi_k$ be homomorphisms from $G^n$ to $H$. Let $K\subset\hat H^k$ be the subgroup that consists of all $k$-tuples $(\chi_1,\dots,\chi_k)$ such that $\prod_{i=1}^k\chi_i\circ\phi_i$ is the trivial character on $G^n$. Let $Y\subset H^k$ be the annihilator of $K$. Then 
\[\P[\forall i\ \phi_i(x)=y_i]=\begin{cases}|Y|^{-1}& y\in Y\\ 0&y\notin Y\\ \end{cases}\]
\end{proposition}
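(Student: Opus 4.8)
The plan is to mimic the proof of Proposition \ref{independence}, but now tracking the subgroup $K$ of ``hidden relations'' among the $\phi_i$ rather than assuming there are none. First I would write the indicator of the event $\{\forall i\ \phi_i(x)=y_i\}$ via Fourier inversion on $H^k$, exactly as before:
\[\P[\forall i\ \phi_i(x)=y_i]=\E_x\E_{\chi_1,\dots,\chi_k\in\hat H}\prod_{i=1}^k\chi_i(\phi_i(x)-y_i)=\E_{\chi\in\hat H^k}\Bigl(\prod_{i=1}^k\chi_i(-y_i)\Bigr)\E_x\prod_{i=1}^k\chi_i(\phi_i(x)).\]
The inner expectation $\E_x\prod_i\chi_i\circ\phi_i$ is $1$ if $\chi\in K$ and $0$ otherwise, since $\prod_i\chi_i\circ\phi_i$ is a character on $G^n$, hence either trivial (exactly when $\chi\in K$, by definition of $K$) or nontrivial (in which case it averages to $0$). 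So the probability collapses to $\E_{\chi\in K}\prod_i\chi_i(-y_i)=|\hat H^k|^{-1}\sum_{\chi\in K}\prod_i\chi_i(y_i)^{-1}$.

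Now the key step is to recognise $\chi\mapsto\prod_i\chi_i(y_i)$ as a character on the subgroup $K\subset\hat H^k$ — call it $\mathrm{ev}_y$ — and to invoke orthogonality of characters on the finite Abelian group $K$: $\sum_{\chi\in K}\mathrm{ev}_y(\chi)$ equals $|K|$ if $\mathrm{ev}_y$ is the trivial character of $K$ and $0$ otherwise. By definition, $\mathrm{ev}_y$ is trivial on $K$ precisely when $\prod_i\chi_i(y_i)=1$ for all $(\chi_1,\dots,\chi_k)\in K$, which is exactly the condition that $y$ annihilates $K$, i.e. $y\in Y$. Hence the probability is $|H|^{-k}|K|$ when $y\in Y$ and $0$ otherwise.

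It remains to identify $|H|^{-k}|K|$ with $|Y|^{-1}$. For this I would apply Lemma \ref{isomorphism}: taking $G=\hat H^k$ and the subgroup $K$, we get $K^\perp\cong\widehat{\hat H^k/K}$, so $|K^\perp|=|\hat H^k/K|=|H|^k/|K|$. But $K^\perp$, viewed inside the double dual $\widehat{\hat H^k}\cong H^k$, is exactly the annihilator $Y$ of $K$, so $|Y|=|H|^k/|K|$, giving $|H|^{-k}|K|=|Y|^{-1}$ as required. The only mild subtlety — and the step I would be most careful about — is the bookkeeping of the canonical identification $\widehat{\hat H^k}\cong H^k$ and the check that under it $K^\perp$ really does correspond to the set $Y=\{y\in H^k:\prod_i\chi_i(y_i)=1\ \forall \chi\in K\}$ as defined in the statement; this is routine duality for finite Abelian groups but deserves an explicit sentence. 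Everything else is a direct re-run of the computation in Proposition \ref{independence} together with one application of character orthogonality.
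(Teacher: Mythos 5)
Your proof is correct and follows essentially the same route as the paper: Fourier inversion on $H^k$, observing that the inner expectation over $x$ vanishes unless $(\chi_1,\dots,\chi_k)\in K$, character orthogonality on $K$ to detect whether $y$ annihilates $K$, and Lemma \ref{isomorphism} together with Pontryagin duality to identify $|H|^k/|K|$ with $|Y|$. The only blemish is the notational slip where you write $\E_{\chi\in K}\prod_i\chi_i(-y_i)$ but mean the sum over $K$ normalised by $|\hat H^k|^{-1}$ (i.e.\ $(|K|/|\hat H|^k)\E_{\chi\in K}$), which your subsequent explicit formula corrects, so the argument stands as written.
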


\begin{proof}
Again, the probability in question is equal to 
\[\E_{\chi_1,\dots,\chi_k\in\hat H}\prod_{i=1}^k\chi_i(-y_i)\E_x\prod_{i=1}^k\chi_i(\phi_i(x)).\]
If $(\chi_1,\dots,\chi_k)\notin K$, then $\prod_{i=1}^k\chi_i\circ\phi_i$ is not the trivial character, so the expectation over $x$ is 0. Therefore, the probability is equal to
\[(|K|/|\hat H|^k)\E_{(\chi_1,\dots,\chi_k)\in K}\prod_{i=1}^k\chi_i(-y_i).\]
If $(y_1,\dots,y_k)\in Y$, then $\prod_{i=1}^k\chi_i(y_i)=1$ for every $(\chi_1,\dots,\chi_k)\in K$, and therefore we end up with $(|K|/|\hat H|^k)$. But by Lemma \ref{isomorphism} (and Pontryagin duality), $|\hat H|^k/|K|=|Y|$, so this equals $|Y|^{-1}$. 

If $(y_1,\dots,y_k)\notin Y$, then the evaluation map $(\chi_1,\dots,\chi_k)\mapsto\prod_{i=1}^k\chi_i(y_i)$ is a non-trivial character on $K$, and therefore its average is 0. 
\end{proof}

\begin{proposition} \label{relatively independent case}
Let $\phi+\phi_1,\dots,\phi+\phi_k$ be homomorphisms from $G^n$ to $H$, let $E\subset H$, and let $K$ be a subgroup of $\hat H$. Suppose that each $\phi_i$ takes values in a subgroup $H_1$ of $H$ and that the $\phi_i$ are independent when considered as homomorphisms to $H_1$. Then there is a subset $F\subset H$ such that the condition $(\phi,H)$ $\e$-approximates the set of conditions $(\phi+\phi_i,E)$, where $\e=(1-|K|/|\hat H|)^k$.
\end{proposition}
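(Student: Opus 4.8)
The plan is to follow the template of Proposition~\ref{Compression of symmetric sunflowers}, which settled the analogous problem over $\F_p$: exhibit a single condition $(\phi,F)$ whose satisfying set is \emph{contained} in that of the family $\{(\phi+\phi_i,E):i\}$ (so that the approximation is automatically internal), and then bound the density of the difference using the joint equidistribution furnished by the independence hypothesis. The natural reading of the statement is that $K$ plays the role of the annihilator $H_1^\perp\subseteq\hat H$ of $H_1$ (equivalently, $H_1$ is the annihilator of $K$); by Lemma~\ref{isomorphism} and Pontryagin duality this gives $|K|=|H/H_1|$, hence $|K|/|\hat H|=|H_1|^{-1}$, so the target error is $(1-|H_1|^{-1})^k$.

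First I would take $F=\{t\in H: t+H_1\subseteq E\}$. Since each $\phi_i$ has all its values in $H_1$, for every $x$ the point $\phi(x)+\phi_i(x)$ lies in the coset $\phi(x)+H_1$; hence $\phi(x)\in F$ forces $\phi(x)+\phi_i(x)\in E$ for every $i$, and so the satisfying set of $(\phi,F)$ is contained in that of the family $\{(\phi+\phi_i,E):i\}$. It then remains only to bound the density of the ``bad'' set $B=\{x\in G^n:\phi(x)\notin F,\ \phi(x)+\phi_i(x)\in E\ \text{for all }i\}$.

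To do that I would split $B$ according to the value $t=\phi(x)$. For $t\notin F$ the coset $t+H_1$ is not contained in $E$, so $(E-t)\cap H_1$ is a proper subset of $H_1$, of size at most $|H_1|-1$. Discarding the constraint $\phi(x)=t$ and using that the $\phi_i$, being independent as homomorphisms into $H_1$, are jointly equidistributed on $H_1^k$ (Proposition~\ref{independence}), one gets $\P[\phi(x)=t\ \wedge\ \forall i\ \phi_i(x)\in E-t]\le\prod_{i}\P[\phi_i(x)\in(E-t)\cap H_1]\le(1-|H_1|^{-1})^k$. Summing over the values of $t$ bounds the density of $B$ by $(1-|H_1|^{-1})^k=(1-|K|/|\hat H|)^k$; to avoid a spurious multiplicative factor of $|H|$ at this last step, rather than discarding $\phi(x)=t$ one keeps it and groups the values of $t$ into cosets of $H_1$, using that the masses $\P[\phi(x)\in c]$ sum to $1$ over the cosets $c$. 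This is essentially the computation in the proof of Proposition~\ref{Compression of symmetric sunflowers}, with ``disjointly supported'' replaced by ``independent'' and the crude lower bound on nonzero probabilities replaced by the exact value $|H_1|^{-1}$.

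The step I expect to be the main obstacle is precisely the coset-by-coset refinement needed to keep the error clean: one must verify that conditioning on the coset of $\phi(x)$ leaves $(\phi_1(x),\dots,\phi_k(x))$ jointly equidistributed on $H_1^k$, which is where the \emph{independence} of the $\phi_i$ (as opposed to mere pairwise separation) genuinely enters, and which may force $F$ to be enlarged beyond $\{t:t+H_1\subseteq E\}$ when $\phi$ is entangled with the petals $\phi_i$. The reduction to the single condition $(\phi,F)$ and the passage from $|H_1|^{-1}$ to $|K|/|\hat H|$ via Lemma~\ref{isomorphism} are routine by comparison.
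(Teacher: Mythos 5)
Your construction of $F=\{t\in H:t+H_1\subseteq E\}$, the containment argument, and the appeal to Proposition \ref{independence} are exactly the paper's proof, and your decoding of the statement ($K=H_1^\perp$, so $|K|/|\hat H|=|H_1|^{-1}$, and ``$(\phi,H)$'' should read $(\phi,F)$) is the intended one. The only divergence is in the final counting. What you prove rigorously --- dropping the event $\phi(x)=t$ and using that $(\phi_1(x),\dots,\phi_k(x))$ is uniform on $H_1^k$ --- gives the bound $|H|\,(1-|H_1|^{-1})^k$ after summing over $t\notin F$, in exact analogy with the factor $p$ in Proposition \ref{Compression of symmetric sunflowers}. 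The paper removes the factor $|H|$ by a different bookkeeping: it fixes $z\notin F$, chooses $w\in H_1$ with $z+w\notin E$, notes that on the event $\phi(x)=z$ the conditions force $\phi_i(x)\neq w$ for every $i$, bounds the probability that no $\phi_i(x)$ equals $w$ by $(1-|H_1|^{-1})^k$ via Proposition \ref{independence}, and then uses that the weights $\P[\phi(x)=z]$ sum to at most $1$ over $z\notin F$.

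The obstacle you flag --- that conditioning on the value (or coset) of $\phi(x)$ must not skew the joint distribution of the $\phi_i(x)$ --- is precisely the step the paper performs without comment: Proposition \ref{independence} is an unconditional statement, and the hypotheses of Proposition \ref{relatively independent case} say nothing about the joint behaviour of $\phi$ with the petals (nothing, for instance, prevents $\phi$ from coinciding with one of the $\phi_i$, in which case the conditional distribution is badly skewed and the stated $F$ need not achieve the clean bound). So your caveat identifies a step that the paper elides rather than a defect peculiar to your route, and your proposed coset-grouping repair would need the same additional independence between $\phi$ and the $\phi_i$ that the paper's conditioning implicitly assumes. Note also that for the way the proposition is used later (Theorem \ref{Abelian groups unrestricted alphabet} cites only the containment paragraph, and any error bound tending to $0$ as $k\to\infty$ suffices there after enlarging $k$), the factor-$|H|$ version you establish is already adequate.
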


\begin{proof}
Let $F=\{z\in H:z+H_1\subset E\}$. Then if $\phi(x)\in F$, we have for all $i$ that $\phi(x)+\phi_1(x)\in F+H_1\subset E$. Thus, the single condition $(\phi,F)$ implies all the conditions $(\phi+\phi_i,E)$. 

Now let us fix some $z\notin F$ and condition on the event that $\phi(x)=z$. Let $w$ be such that $z+w\notin E$. Then the probability that $\phi(x)+\phi_1(x),\dots,\phi(x)+\phi_k(x)\in E$ given that $\phi(x)=z$ is at most the probability that none of the $\phi_i(x)$ is equal to $w$.

But each $\psi_i$ takes values in $H_1$, and by hypothesis the $\phi_i$ are independent when considered as homomorphisms to $H_1$, so by Proposition \ref{independence} the probability that none of them take the value $w$ is at most $(1-1/|H_1|)^k$.
\end{proof}

\begin{theorem} \label{Abelian groups unrestricted alphabet}
Let $G$ and $H$ be finite Abelian groups and let $\e>0$. Then there exists a constant $C$ such that for every $n$, every set of $H$-conditions on $G^n$ can be $\e$-approximated by at most $C$ $H$-conditions.
\end{theorem}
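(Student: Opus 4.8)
The plan is to follow the same high-level strategy as in the proof of Theorem~\ref{main approximation theorem}, with the combinatorial notion of the support of a linear form replaced by the character-theoretic notion of independence defined above. The phenomenon that makes this harder --- already visible in Example~\ref{Second example} --- is that a family of homomorphisms $G^n\to H$ can fail to be independent while displaying no ``obvious'' redundancy, so that the cheap estimate of Proposition~\ref{independence} is not directly available; the remedy is to uncover a hidden sunflower structure. After discarding every condition $(\phi_i,E_i)$ with $E_i=H$ and returning the single empty condition if some $E_i=\emptyset$, we may assume every $E_i$ is a non-empty proper subset of $H$, so that $|E_i|\le|H|-1$; write the family as $\{(\phi_i,E_i):i\in I\}$.

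First I would fix a threshold $B_0=B_0(\e,H)$ with $(1-|H|^{-1})^{B_0}\le\e$ and ask whether $I$ contains an independent subfamily $M$ of size at least $B_0$. If so, then by Proposition~\ref{independence} the satisfying set of $\{(\phi_m,E_m):m\in M\}$, which contains that of the whole family, has density $\prod_{m\in M}|E_m|/|H|\le(1-|H|^{-1})^{|M|}\le\e$, so the whole family is internally $\e$-approximated by the single empty condition and we are done. If not, fix a maximal (by inclusion) independent subfamily $M$ of $I$, so $|M|<B_0$. The crucial structural fact is then the following \emph{decomposition claim}: if $\{\psi_i:i\in J\}$ is a family of homomorphisms $G^n\to L$ ($L$ a finite Abelian group) with a maximal independent subfamily $M$ of size $s$, then one can write $J=M\cup J_1\cup\dots\cup J_t$ with $t\le c(s,L)$, where each $J_\ell$, after one chooses a reference index $i_0\in J_\ell$ and writes $\psi_i=\psi_{i_0}+\rho_i$, is a \emph{sunflower}: the petals $\rho_i$ all take values in one fixed proper subgroup of $L$. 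To see this, note that for $i\notin M$, maximality of $M$ gives a nontrivial relation $\prod_{m}\chi_m\circ\psi_m\cdot\chi_i\circ\psi_i=1$, and independence of $M$ forces $\chi_i\in\hat L$ to be nontrivial; since $L/\ker\chi_i$ is cyclic, every character of $L/\ker\chi_i$ pulls back along $q_i\circ\psi_i$ (with $q_i:L\to L/\ker\chi_i$ the quotient) to a power of $\chi_i\circ\psi_i$, hence into the subgroup $\Lambda=\langle\chi\circ\psi_m:m\in M,\chi\in\hat L\rangle$ of $\widehat{G^n}$; by Lemma~\ref{isomorphism} and Pontryagin duality $\Lambda$ is the annihilator of $\ker\Phi_M$ for $\Phi_M=(\psi_m)_{m\in M}$, so $q_i\psi_i$ factors through $\Phi_M$. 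There are at most $c(s,L)$ possibilities --- depending only on $s$ and $L$ --- for the pair consisting of the subgroup $\ker\chi_i$ and the induced homomorphism $\operatorname{Image}\Phi_M\to L/\ker\chi_i$, and grouping the indices $i\notin M$ according to this pair yields the $J_\ell$: within one $J_\ell$ all the maps $q_i\psi_i$ agree, so all differences $\psi_i-\psi_{i_0}$ lie in a single proper subgroup.

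Apply the decomposition claim to $\{\phi_i:i\in I\}$ with the $M$ just fixed, and subdivide each sunflower further according to the value of $E_i$ (at most $2^{|H|}$ classes). By Remark~\ref{union remark}, with $\e$ divided among the $\le B_0+c(B_0,H)\,2^{|H|}$ resulting pieces, the theorem reduces to internally $\e'$-approximating, by boundedly many conditions, a single sunflower $\{(\phi_0+\rho_i,E):i\in J\}$ with petals $\rho_i:G^n\to N$ for some proper subgroup $N\subsetneq H$. This I would handle by a secondary recursion on $|N|$. Set $B_1=B_1(\e',N)$ so that the bound of Proposition~\ref{relatively independent case} is $\le\e'$ once there are $B_1$ petals, and ask whether the petals contain an independent subfamily $M'$ of size $\ge B_1$. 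If so, Proposition~\ref{relatively independent case} applied to $\{(\phi_0+\rho_m,E):m\in M'\}$ yields $F=\{z\in H:z+N\subseteq E\}$ with $(\phi_0,F)$ internally $\e'$-approximating it; and since $\phi_0(x)\in F$ already forces $\phi_0(x)+\rho_i(x)\in F+N\subseteq E$ for every $i\in J$, the satisfying set of $(\phi_0,F)$ lies between those of the full sunflower and of the $M'$-subfamily, so $(\phi_0,F)$ --- a single condition --- internally $\e'$-approximates the full sunflower. If not, a maximal independent subfamily $M'$ of the petals has $|M'|<B_1$, and re-applying the decomposition claim to $\{\rho_i:i\in J\}$ writes the sunflower as $\le|M'|$ conditions kept unchanged together with $\le c(B_1,N)$ sub-sunflowers $\{((\phi_0+\rho_{i_0})+\sigma_i,E)\}$ whose petals $\sigma_i$ take values in a proper subgroup of $N$; on these we recurse. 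Since the petal group shrinks strictly at each level, this recursion has depth at most $\log_2|H|$ (its base case, trivial petal group, being a family of identical conditions), so the nesting is bounded, and hence the total number of output conditions and the total loss of $\e$ are bounded in terms of $\e$, $G$ and $H$.

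The main obstacle is precisely this interplay between the decomposition claim and the fact that the petals of a sunflower need not be independent themselves: this is what makes the secondary recursion necessary, and it is the insistence that the petals always live in a \emph{proper} subgroup that makes the recursion terminate, with bounded depth and hence bounded total blow-up. This is also where the argument genuinely leaves behind the easy linear case of Proposition~\ref{Compression in the basic case}, in which a single maximal independent family already controls everything. A further point needing care, inherited from Section~\ref{Section: Compression of linear conditions in restricted subsets of F_p}, is that the approximations must be internal: one may never simply delete conditions, since that enlarges the satisfying set, which is exactly why a sunflower collapses not to one of its constituent conditions but to $(\phi_0,\{z\in H:z+N\subseteq E\})$.
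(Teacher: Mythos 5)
Your proposal is correct and takes essentially the same route as the paper's proof: the same dichotomy between a large independent family of petals (handled by Propositions \ref{independence} and \ref{relatively independent case}, collapsing everything to the single condition $(\phi_0,\{z:z+N\subseteq E\})$) and a small maximal independent family, in which case pigeonholing on the dependency relations confines the differences from a reference homomorphism to a proper subgroup, followed by recursion on that subgroup. The only cosmetic difference is that the paper pigeonholes directly on the tuple of characters $(\chi,\chi_1,\dots,\chi_{k'})$ occurring in the relations, so that $\chi\circ\phi_i$ is one fixed character for all $i$, whereas you pigeonhole on the pair consisting of $\ker\chi_i$ and the induced homomorphism through $\Phi_M$ --- the same step packaged slightly differently.
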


\begin{proof}
We note first that the number of subsets of $H$ is bounded, so by the usual partitioning argument it is sufficient to prove the result in the case that they are all equal to a single set $E$. 

We prove the result by induction on the size of a subgroup $H_1$ of $H$. The inductive hypothesis is that if the homomorphisms are of the form $\phi+\phi_i$, where each $\phi_i$ takes values in $H_1$, then the result holds. Let us write $C(G,H_1,\e)$ for the number of conditions needed for an $\e$-approximation in this case.

If $H_1$ is the trivial subgroup $\{0\}$, then all the homomorphisms are the same, so all the conditions are the same, and the result is trivial. 

Suppose now that the homomorphisms are of the given form and that we know the result for all proper subgroups of $H_1$. If it is possible to find $k$ of the homomorphisms $\phi_i$ that are independent, where $(1-|H_1|^{-1})^k\leq\e$, then by the first paragraph of the proof of Lemma \ref{relatively independent case} there is a single condition that approximates the entire set of conditions. 

If it is not possible to find more than $k$ of the $\phi_i$ that are independent, then pick a maximal independent set $\{\psi_1,\dots,\psi_{k'}\}$, with $k'\leq k$. Then for every $\phi_i$ there is a combination $(\chi\circ\phi_i)\prod_{j=1}^{k'}(\chi_j\circ\psi_j)$ that equals the trivial character, where all of $\chi_1,\dots,\chi_{k'}$ and $\chi$ are characters on $H_1$ and $\chi$ is not the trivial character. By the usual partitioning argument we can assume that the $(k'+1)$-tuple $(\chi,\chi_1,\dots,\chi_{k'})$ is the same for every $\phi_i$. So that gives us a character $\g:G^n\to\C$ and a character $\chi\in\hat H_1$ such that $\chi\circ\phi_i=\g$ for every $i$.

Let us write $\xi_i=\phi_i-\phi_1$ for each $i$. Then $\phi_i=\phi_1+\xi_i$, and $\chi\circ\xi_i=0$ for every $i$. It follows that each $\xi_i$ takes values in the subgroup $H_2=\{y\in H_1:\chi(y)=0\}$, which is a proper subgroup of $H_1$, as $\chi$ is a non-trivial character on $H_1$. Therefore, by the inductive hypothesis, the conditions $(\phi_i,E)$ can be $\e$-approximated (by at most $C(G,H_2,\e)$ conditions), and we are done.
\end{proof}

\section{General finite Abelian groups with restricted alphabets} \label{restricted alphabet group case}

We now show how to combine the methods of proof of the last two sections to give a proof of Theorem \ref{Compression in the groups case for arbitrary alphabets}, which generalizes Theorem \ref{Abelian groups unrestricted alphabet} to the corresponding statement for restricted alphabets.

Before we start on this, we briefly mention a set-up that appears to be more general than that of Theorem \ref{Compression in the groups case for arbitrary alphabets} but is in fact equivalent to it. Suppose that $H$ is a finite Abelian group, $S$ is a non-empty finite set (so not necessarily a subset of a finite Abelian group), and $\phi_1,\dots,\phi_n$ are functions from $S$ to $H$. We can define a function $\phi:S^n\to H$ by $\phi(x)=\sum_{i=1}^n\phi_i(x_i)$, and if $E\subset H$, then we can consider the condition $(\phi,E)$ on $S^n$, which is the condition $\phi(x)\in E$. This more general looking situation is covered by Theorem \ref{Compression in the groups case for arbitrary alphabets}, because for every $k$ we can find a finite Abelian group $G$ and a subset $S\subset G$ of size $k$ such that every function from $S$ to $H$ can be extended to a homomorphism from $G$ to $H$. Then if we extend functions $\phi_1,\dots,\phi_n$ to homomorphisms $\psi_1,\dots,\psi_n$, we find that the map $\phi$ defined above is the restriction to $S^n$ of the homomorphism $\psi:G^n\to H$ defined by $\psi(x)=\sum_{i=1}^n\psi_i(x)$. Thus, in a certain sense Theorem \ref{Abelian groups unrestricted alphabet} is not really about subsets of $G$ but more about particular kinds of functions from a product $S^n$ to a finite Abelian group $H$. However, the embedding of $S$ into a group $G$ is convenient.
\medskip

Now let us turn to the proof. We begin with some definitions that will generalize those that we used in the case $G=H=\F_p$. Given a homomorphism $\phi:G^n\to H$, we can express it in the form $\phi(x)=\sum_i\phi_i(x_i)$, where each $\phi_i$ is a homomorphism from $G$ to $H$. If $S\subset G$ is non-empty, then we say that the \emph{support of $\phi$ with respect to $S$} (or just the support of $\phi$ if $S$ is clear from the context) is the set of all $i$ such that $\phi_i$ is not constant on $S$. We call the maps $\phi_i|_S$ the \emph{coefficients} of $\phi$ (with respect to $S$), and the \emph{coefficient distribution} of $\phi$ is the function that takes each function $\psi:S\to H$ to the number of $i$ such that $\phi_i|_S=\psi$. The definitions of ``sunflower", ``ball", and ``symmetric sunflower" carry over word for word from our earlier definitions (see Definition \ref{Key structures in the F_p case}) once the words and phrases ``support" and ``coefficient distribution" are interpreted in these generalized senses. We say that homomorphisms $\phi_i: G^n \to H$ defined by $\phi_i(x) = \sum_{j=1}^n \phi_{ij} x_j$ for every $i \in I$ are $r$-separated if whenever $\chi_i \in \hat{H}$ are characters for each $i \in I$ that are not all trivial, the homomorphism \[\prod_{i \in I} \chi_i \circ \phi_i: x \to  \prod_{j=1}^n \prod_{i \in I} \chi_i(\phi_{ij}(x_j))\] has support size at least $r$ (with the support being the number of $j$ such that $\prod_{i \in I} \chi_i(\phi_{ij}(x_j))$ is not constant on $S$).

We begin by proving an analogue of Proposition \ref{Lower bound on non-zero probabilities}, but with a less satisfactory bound.

\begin{proposition} \label{lower bound lemma for groups}
Let $G$ and $H$ be finite Abelian groups, let $S$ be a non-empty subset of $G$, let $n$ be a positive integer, and let $\phi:G^n\to H$ be a homomorphism. Let $x$ be an element of $S^n$, chosen uniformly at random. Then for every $y\in H$, the probability that $\phi(x)=y$ is either 0 or at least $c$, where $c=c(H)$ is a positive constant that depends only on $H$ (and on $S$). 
\end{proposition}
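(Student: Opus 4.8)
The plan is to reduce, via the structure theorem for finite Abelian groups, to the case where $H$ is cyclic of prime power order, and then to imitate the Cauchy--Davenport argument from Proposition \ref{Lower bound on non-zero probabilities}, using an appropriate substitute for Cauchy--Davenport valid in $\Z_{q^t}$. First I would write $\phi(x)=\sum_{j=1}^n\psi_j(x_j)$ with each $\psi_j:G\to H$ a homomorphism, and discard the coordinates $j$ for which $\psi_j$ is constant on $S$ (equivalently $\psi_j(S)$ is a single point), since these contribute a fixed translate and do not affect whether a given value is attained or with what probability. So I may assume every $\psi_j|_S$ takes at least two values. The image of $\phi$ restricted to $S^n$ is then the sumset $\psi_1(S)+\dots+\psi_n(S)$ (translated by the discarded part), and the probability that $\phi(x)=y$ equals $|S|^{-n}$ times the number of representations of $y$ as such a sum.

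Next I would observe that it suffices to produce, for each attained value $y$, a bounded number $t=t(H)$ of coordinates $j_1,\dots,j_t$ together with a choice of the remaining coordinates such that $y$ is still attainable; then conditioning on the values of the other coordinates and using the law of total probability gives a lower bound of $|S|^{-t}$, hence of $(\max_{g\in G, \text{not ruled out}} \ldots)$ — more simply, a lower bound of $2^{-t}$ is not available since $|S|$ can exceed $2$, so I would instead aim for the clean bound $c(H)=|H|^{-|H|}$ or similar. Concretely: since each $\psi_j(S)\subset H$ has at least two elements, each difference set $\psi_j(S)-\psi_j(S)$ contains a non-zero element $h_j$. By pigeonhole among the at most $|H|$ possible values of $h_j$, or rather by considering the subgroups generated, one shows that after at most $|H|$ well-chosen coordinates the partial sumset already generates a full coset of some subgroup; iterating over the subgroup chain of $H$ (which has length at most $\log_2|H|$) and absorbing at most $|H|$ coordinates at each step, we reach the whole relevant coset using at most $t(H)\le|H|\log_2|H|$ coordinates. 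Fixing the other coordinates arbitrarily among values consistent with $y$ being attained, and varying these $t(H)$ coordinates, shows $y$ is attained with probability at least $|S|^{-t(H)}$.

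To make the "absorbing" step precise I would use the following sumset fact: if $A\subset H$ and $a\in A$, then $A-a$ contains $0$ and some non-zero $h$, so $\langle A-a\rangle$ properly contains $\langle 0\rangle$; and if $B_1,\dots,B_m$ are subsets of $H$ each containing $0$, then as soon as the subgroup $\langle B_1+\dots+B_m\rangle$ stabilises, further $B$'s cannot enlarge it, and in fact $B_1+\dots+B_m$ will eventually equal that subgroup once $m$ exceeds its order. Applying this to the translated coefficient sets and organising coordinates so that each new "block" strictly enlarges the ambient subgroup, the total number of coordinates used is bounded purely in terms of $|H|$. The main obstacle I anticipate is precisely the bookkeeping in this last step: Cauchy--Davenport was available in $\F_p$ because the only subgroups are trivial, whereas in a general $H$ one must carefully track the lattice of subgroups generated by partial sumsets and ensure the number of coordinates needed to "fill" each successive subgroup is bounded — this is where the dependence on $H$ (and not on $n$ or $S$) must be extracted cleanly. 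Everything else (the reduction to non-constant coefficients, the conditioning argument, the final application of the law of total probability) is routine and parallels the proof of Proposition \ref{Lower bound on non-zero probabilities}.
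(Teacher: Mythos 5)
Your overall plan (pick a bounded set $J$ of coordinates, condition on the rest, use the law of total probability as in Proposition \ref{Lower bound on non-zero probabilities}) is viable, but as written it has a genuine gap at exactly the point you flag, and moreover the reduction step is misstated in a way that matters. For the conditioning argument to give a bound independent of $n$, you need that $y$ remains attainable for \emph{every} assignment in $S$ of the coordinates outside $J$; producing $J$ ``together with a choice of the remaining coordinates such that $y$ is still attainable'', or fixing the others ``among values consistent with $y$ being attained'', only controls some conditionings, whose total probability need not be bounded below. Concretely, writing $\phi(x)=\sum_j\psi_j(x_j)$, what is required is that $\sum_{j\in J}\psi_j(S)$ contain a full coset of the subgroup generated by $\bigcup_{j\notin J}\bigl(\psi_j(S)-\psi_j(S)\bigr)$, i.e.\ the subgroup coming from the \emph{unselected} coordinates. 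Your ``subgroup chain of length $\log_2|H|$, at most $|H|$ coordinates per step'' bookkeeping does not deliver this: with $H=\Z/2\times\Z/2$, half the coordinates having coefficient set $\{0,(1,0)\}$ and half $\{0,(0,1)\}$, selecting coordinates of one type only fills a coset of a proper subgroup, and varying the remaining coordinates shifts the target out of it. The missing lemma is a covering statement of the following kind: group the coordinates by coefficient type $B=\psi_j|_S(S)$ (boundedly many types in terms of $H$); for an abundant type, the $|H|$-fold sumset of $B$ stabilizes to a full coset of $\langle B-B\rangle$, so put $|H|$ such coordinates into $J$; for a rare type, put \emph{all} its coordinates into $J$ so that it cannot perturb the target from outside. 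With that lemma the argument does close, with $t$ depending only on $H$ (and the bound $|S|^{-t}$), but it is not in your sketch. Separately, the opening reduction to cyclic prime-power $H$ via the structure theorem is not justified --- the events $\phi^{(i)}(x)=y_i$ for the different cyclic factors involve the same $x$ and are not independent --- though your subsequent sketch does not actually use it.

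For comparison, the paper argues quite differently: it expands $\P[\phi(x)=y]$ over characters of $H$, shows each nontrivial character composed with a nonconstant coefficient contributes a factor of modulus at most $1-e(H)^{-2}$ (with $e(H)$ the exponent), so if every nontrivial $\chi\circ\phi$ has support at least $e(H)^2\log(2|H|)$ the value is nearly equidistributed; otherwise it conditions on the few support coordinates of a biased character $\chi$ and induces on the proper subgroup $\ker\chi$, obtaining $c(H)\ge |S|^{-e(H)^2(\log(2|H|))^2}$. Your sumset route, once the covering lemma above is supplied, would be a genuinely more elementary alternative, but in its current form the key step is asserted rather than proved.
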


\begin{proof}
Let $Z$ be the support of $\phi$. Then $\phi(x)$ depends only on $x|_Z$, so each value $y$ that is taken with non-zero probability is taken with probability at least $|S|^{-|Z|}$.

We also have that 
\[\P[\phi(x)=y]=\E_x\E_{\chi\in\hat H}\chi(\phi(x)-y)=\E_{\chi\in\hat H}\chi(-y)\E_x\chi(\phi(x))=\E_{\chi\in\hat H}\chi(-y)\prod_i\E_{x_i}\chi(\phi_i(x_i)).\]
Note that $\chi\circ\phi$ is a homomorphism and its support is the number of $i$ such that $\chi\circ\phi_i$ is not constant on $S$. For each such $i$, $|\E_{x_i}\chi(\phi_i(x_i))|$ is at most $|1+\exp(2\pi i/k)|/2$, where $k$ is the order of $\chi$ in $\hat H$. It is therefore at most $1-e(H)^{-2}$, where $e(H)$ is the exponent of~$H$. 

It follows that if for each non-trivial character $\chi$ the support size of $\chi\circ\phi$ is at least $r$, then for every $y$ the difference between $\P[\phi(x)=y]$ and $|H|^{-1}$ is at most $\exp(-re(H)^{-2})$. So if $r$ is at least $e(H)^2\log(2|H|)$, then the probability that $\phi(x)=y$ is at least $|H|^{-1}/2$ for all $y$. 

Suppose that there is some non-trivial character $\chi$ such that $\chi\circ\phi$ has support size less than $r$. Without loss of generality the support of $\chi$ is $\{1,2,\dots,m\}$ for some $m\leq r$. Let us write $x=x^1+x^2$ where $x^1=(x_1,\dots,x_m,0,\dots,0)$ and $x^2=(0,\dots,0,x_{m+1},\dots,x_n)$. Then $\chi(\phi(x))$ depends only on $x^1$, or to put it another way, $\phi(x^2)$ belongs to a certain coset of $H_1=\ker\chi$. Thus, for each coset $z+H_1$, either there is no $x$ with $\phi(x)\in z+H_1$ or the probability that $\phi(x)\in z+H_1$ is at least $|S|^{-r}$. Suppose now that there exists $x$ with $\phi(x)\in z+H_1$ and pick $x^1$ such that $\phi(x^1+x^2)\in z+H_1$ for every $x^2$. Then for each $y\in z+H_1$ such that there exists $x^2$ with $\phi(x^1+x^2)=y$, the probability that $\phi(x^1+x^2)=y$ is, by induction on $|H|$, at least $c(H_1)$. It follows that we may take $c(H)$ to be $|S|^{-e(H)^2\log(2|H|)}c(H_1)$. Since $|H_1|\leq|H|/2$ and $e(H_1)\leq e(H)$, an easy induction shows that we may therefore take $c(H)$ to be at least $|S|^{-e(H)^2(\log(2|H|))^2}$.
\end{proof}

The next lemma and its proof combine Proposition \ref{Compression of symmetric sunflowers} with Proposition \ref{relatively independent case}. We stress that, consistent with the definition that we have given, maps $\theta_i$ consisting merely of the same map repeated several times form a sunflower.

\begin{lemma} \label{Compression of symmetric sunflower/separated pairs in the groups case}
Let $G$ and $H$ be finite Abelian groups, let $S$ be a non-empty subset of $G$, and let $H_1$ be a subgroup of $H$. Let $\Phi=\{\phi_1,\dots,\phi_m\}$ be a set of $m$ homomorphisms from $G^n$ to $H$ and suppose that we can write $\phi_i=\theta_i+\psi_i$ in such a way that the maps $\theta_i$ form a symmetric sunflower contained in a ball of radius $r$, that the maps $\psi_i$ take values in $H_1$ and are $(rm+r')$-separated. Then for every proper subset $E\subset H$ the conditions $(\phi_i,E)$, regarded as conditions on $S^n$, can be $\e$-approximated by a single condition, where 
\[\e=|H|\Big(\exp\big(-m|S|^{-e(H)^2(\log(2|H|))^2}|H_1|\big)+|H_1|^m\exp\big(-r'e(H_1)^{-2}\big)\Big).\]
\end{lemma}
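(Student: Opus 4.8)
The plan is to fuse the sunflower estimate of Proposition~\ref{Compression of symmetric sunflowers} with the relative‑independence estimate of Proposition~\ref{relatively independent case}; the new feature is that the petals of the sunflower now interact with the maps $\psi_i$, and this interaction has to be removed by conditioning. \textbf{Strategy and set‑up.} Write the symmetric sunflower formed by the $\theta_i$ as $\theta_i=\theta_0+\tau_i$, where the $\tau_i$ are pairwise disjointly supported, $\{\tau_i\}$ is a symmetric family, and $|Z(\tau_i)|\le r$ for every $i$; thus $\phi_i=\theta_0+\tau_i+\psi_i$. Put $Z:=\bigcup_i Z(\tau_i)$, so $|Z|\le rm$, and let $V\subseteq H$ be the set of values attained by $\tau_i$ on $S^n$, which by the symmetry of $\{\tau_i\}$ does not depend on $i$. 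The single approximating condition will be $(\theta_0,F)$ with $F:=\{z\in H:z+V+H_1\subseteq E\}$. Since $\tau_i(x)\in V$ and $\psi_i(x)\in H_1$, whenever $\theta_0(x)\in F$ we get $\phi_i(x)\in F+V+H_1\subseteq E$ for every $i$, so the satisfying set of $(\theta_0,F)$ lies inside that of $\{(\phi_i,E)\}$; it remains to bound $\sum_{z\in H\setminus F}\P_{x\in S^n}[\theta_0(x)=z\ \wedge\ \forall i\ \phi_i(x)\in E]$ by $\e$, and since $|H\setminus F|\le|H|$ it is enough to bound each summand by $\e/|H|$.

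\textbf{Reduction and conditioning.} Fix $z\in H\setminus F$. Since $z+V+H_1\not\subseteq E$, choose $v^*\in V$ and $h^*\in H_1$ with $w:=v^*+h^*\notin E-z$. As in the proof of Proposition~\ref{relatively independent case}, if $\tau_i(x)+\psi_i(x)=w$ for some $i$ then $\phi_i(x)=z+w\notin E$, so the summand for $z$ is at most $\P_{x\in S^n}[\forall i\ \tau_i(x)+\psi_i(x)\ne w]$. Now split $\psi_i=\psi_i'+\psi_i''$, with $\psi_i'$ supported on $Z$ and $\psi_i''$ on its complement (both valued in $H_1$, since each coordinate homomorphism of $\psi_i$ already maps into $H_1$), write $x''$ for $x|_{[n]\setminus Z}$, and condition on $x|_Z$: this fixes every $\tau_i(x)$ and every $\psi_i'(x)\in H_1$, leaving $x''$ uniform on $S^{[n]\setminus Z}$. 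Moreover $\prod_i\chi_i\circ\psi_i$ equals $(\prod_i\chi_i\circ\psi_i')(\prod_i\chi_i\circ\psi_i'')$, a product of two maps with disjoint supports, so the support of $\prod_i\chi_i\circ\psi_i''$ is at least that of $\prod_i\chi_i\circ\psi_i$ minus $|Z|$, which for non‑trivial $(\chi_i)$ is at least $(rm+r')-rm=r'$; that is, the $\psi_i''$ are $r'$‑separated. This trade‑off is precisely why the hypothesis is phrased with $rm+r'$ rather than $r'$.

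\textbf{Conditional estimate and averaging.} With $x|_Z$ fixed and $u_i:=w-\tau_i(x)-\psi_i'(x)$, the conditional probability is $\P_{x''}[\forall i\ \psi_i''(x'')\ne u_i]$. Every non‑trivial character of $H_1^m$ composes with the joint map $(\psi_1'',\dots,\psi_m''):G^{[n]\setminus Z}\to H_1^m$ to give a map of support at least $r'$, so the Fourier estimate established in the proof of Proposition~\ref{lower bound lemma for groups} (applied to this joint map, whose target has exponent $e(H_1)$) gives $|\P_{x''}[\forall i\ \psi_i''(x'')=y_i]-|H_1|^{-m}|\le\exp(-r'e(H_1)^{-2})$ for every $y\in H_1^m$. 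Since $u_i\in H_1$ exactly when $\tau_i(x)\equiv v^*\pmod{H_1}$ (because $\psi_i'(x),h^*\in H_1$), summing over the $y\in H_1^m$ with $y_i\ne u_i$ for all $i$ yields
\[\P_{x''}[\forall i\ \psi_i''(x'')\ne u_i]\ \le\ (1-|H_1|^{-1})^{N}+|H_1|^{m}\exp(-r'e(H_1)^{-2}),\]
where $N=N(x|_Z)$ is the number of $i$ with $\tau_i(x)\equiv v^*\pmod{H_1}$. Finally, since the $\tau_i$ have pairwise disjoint supports, the events $\{\tau_i(x)\equiv v^*\pmod{H_1}\}$ depend on disjoint blocks of coordinates and so are independent, and each has probability at least $c(H/H_1)$: the coset $v^*+H_1$ is attained by $\tau_i$ (as $v^*\in V$), and Proposition~\ref{lower bound lemma for groups}, applied to $\tau_i$ composed with the quotient map $H\to H/H_1$, bounds below by $c(H/H_1)$ the probability of any attained value. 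A short computation then gives $\E_{x|_Z}[(1-|H_1|^{-1})^{N}]\le(1-c(H/H_1)|H_1|^{-1})^m\le\exp(-m\,c(H/H_1)|H_1|^{-1})$, and combining this with $c(H/H_1)\ge|S|^{-e(H)^2(\log(2|H|))^2}$ from Proposition~\ref{lower bound lemma for groups} and summing over the at most $|H|$ cosets $z\in H\setminus F$ produces the bound on $\e$ claimed in the statement.

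\textbf{The main difficulty.} The crux is the interaction between the petal coordinates $Z$ and the maps $\psi_i$: because $\psi_i$ need not be independent of $\tau_i$, one cannot simply multiply probabilities as in Proposition~\ref{Compression of symmetric sunflowers}. Conditioning on $x|_Z$ decouples them, but at the cost of discarding up to $|Z|\le rm$ of the separation of the $\psi_i$ (hence the hypothesis that they be $(rm+r')$‑separated), and at the further cost of replacing exact equidistribution on $S^{[n]\setminus Z}$ by the near‑equidistribution of Proposition~\ref{lower bound lemma for groups}; the resulting error term $|H_1|^m\exp(-r'e(H_1)^{-2})$ and the weak lower bound $c(H/H_1)$ are exactly what make the final expression for $\e$ somewhat lossy.
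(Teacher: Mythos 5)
Your proof is correct and takes essentially the same route as the paper's: the same approximating condition $(\theta_0,F)$ with $F=\{z\in H: z+V+H_1\subseteq E\}$, the same conditioning on the petal coordinates $Z$, the same use of $(rm+r')$-separation to get $r'$-separation of the $\psi_i$ off $Z$ together with the Fourier estimate from Proposition~\ref{lower bound lemma for groups}, and the same averaging over the independent, disjointly supported petals; your only variants (fixing a single bad value $w=v^*+h^*$ and lower-bounding the coset event $\tau_i(x)\equiv v^*\pmod{H_1}$ via $c(H/H_1)$, where the paper tracks the sets $E_{y+\zeta_i(x)}$ and applies the lower bound to $\zeta_i$ directly) are immaterial. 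One remark: exactly as in the paper's own binomial-theorem step, the computation in fact yields $\exp(-mq|H_1|^{-1})$ rather than $\exp(-mq|H_1|)$, where $q=|S|^{-e(H)^2(\log(2|H|))^2}$, so the factor $|H_1|$ inside the first exponential of the stated $\e$ should presumably be $|H_1|^{-1}$ -- a discrepancy present in the paper's proof as well, not a gap in yours.
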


\begin{proof}
Let $\theta_i=\g+\zeta_i$, where the $\zeta_i$ have disjoint supports of size at most $r$ and the same coefficient distribution. For each $x\in H$, let $E_x=\{y\in H_1:x+y\in E\}$. We now let $F\subset H$ be the set of $t\in H$ such that $E_{t+\zeta_i(x)}=H_1$ for every $x\in S^n$. Note that since the $\zeta_i$ have the same coefficient distribution, their images are the same, so $H$ does not depend on $i$.

We shall now show that the single condition $(\g,F)$ $\e$-approximates the set of conditions $(\phi_i,E)$. First, observe that if $\g(x)\in F$, then for every $i$, 
\[E_{\theta_i(x)}=E_{\g(x)+\zeta_i(x)}=H_1,\]
and therefore, since $\psi_i$ takes values in $H_1$, $\phi_i(x)=\theta_i(x)+\psi_i(x)\in E$. Thus the condition $(\g,F)$ implies all the other conditions. 

It remains to prove that the set of $x\in S^n$ such that $\g(x)\notin F$ and $\phi_i(x)\in E$ for every $i$ has density at most $\e$. For this purpose let us fix $y\notin F$ and obtain an upper bound for the probability that $\g(x)=y$ and $\phi_i(x)\in E$ for every $i$. Note that if $\g(x)=y$, then $\phi_i(x)\in E$ if and only if $\psi_i(x)\in E_{y+\zeta_i(x)}$ for every $i$. Since $y\notin F$, there exists $x\in S^n$ such that $E_{y+\zeta_i(x)}\ne H_1$. That is, there exist $x\in S^n$ and $w\in H$ such that $\zeta_i(x)=w$ and $E_{y+w}\ne H_1$. By Proposition \ref{lower bound lemma for groups} the probability that $\zeta_i(x)=w$, when $x$ is chosen uniformly from $S^n$, is at least $|S|^{-e(H)^2(\log(2|H|))^2}$, from which it follows that the probability that $E_{y+\zeta_i(x)}\ne H_1$ is at least $|S|^{-e(H)^2(\log(2|H|))^2}$. Let $m(x)$ be the number of $i$ such that $E_{y+\zeta_i(x)}\ne H_1$.

Now let $Z$ be the union of the supports of the maps $\zeta_i$. Then $Z$ has cardinality at most $rm$. Let us write $W$ for $Z^c$ and $\pi_Z$ and $\pi_W$ for the coordinate projections onto $Z$ and $W$. Then since the maps $\psi_i$ are $(rm+r')$-separated, it follows that the maps $\psi_i\circ\pi_W$ are $r'$-separated. Applying the beginning of the proof of Proposition \ref{lower bound lemma for groups} to the map $\psi=(\psi_1\circ\pi_W,\dots,\psi_m\circ\pi_W)$, we find that for each $(z_1,\dots,z_m)\in H_1^m$ the probability that $\psi_i(\pi_Wx)=z_i$ for $i=1,2,\dots,m$ differs from $|H_1|^{-m}$ by at most $\exp(-r'e(H_1)^{-2})$, since $e(H^m)=e(H)$. Therefore, by conditioning on the restriction of $x$ to $Z$ and applying the law of total probability we find that the probability that $\psi_i(x)\in E_{y+\zeta_i(x)}$ for every $i$ is at most $(1-|H_1|^{-1})^{m(x)}+|H_1|^m\exp(-r'e(H_1)^{-2})$.

We now drop the conditioning. Let $q=|S|^{-e(H)^2(\log(2|H|))^2}$. Then from the above calculations it follows that the probability that $\psi_i(x)\in E_{y+\zeta_i(x)}$ for every $i$ is at most
\begin{align*}\sum_{j=0}^m\binom mj(1-q)^{m-j}&q^j(1-|H_1|^{-1})^{j}+|H_1|^m\exp(-r'e(H_1)^{-2})\\
&=(1-q|H_1|)^m+|H_1|^m\exp(-r'e(H_1)^{-2})\\
&\leq\exp(-mq|H_1|)+|H_1|^m\exp(-r'e(H_1)^{-2}),\\
\end{align*}
where the equality follows from the binomial theorem.

Since there are at most $|H|$ possibilities for $y$, we obtain the bound stated. 
\end{proof}

\begin{remark} \label{remark about bounds} Note that if we wish to obtain an $\e$-approximation, then it is sufficient if $m\geq\log(2|H|/\e)|S|^{e(H)^2(\log(2|H|))^2}|H_1|$ and $r'\geq\log(2|H||H_1|^m/\e)e(H_1)^2$. Note also that the proof that the condition $(\g,F)$ implies all the other conditions used only the fact that the $\theta_i$ have the same coefficient distribution, so if $t=\lceil\log(2|H|/\e)|S|^{e(H)^2(\log(2|H|))^2}|H_1|\rceil$, then to obtain an $\e$-approximation it is sufficient if the $\theta_i$ belong to a symmetric ball of radius $r$ and we can find $t$ of them that form a sunflower. Taking the contrapositive, if the $\theta_i$ belong to a symmetric ball and $(\g,F)$ does \emph{not} $\e$-approximate the set of conditions $(\phi_i,E)$, then it is not possible to find a set $I$ of size $t$ such that the conditions $\theta_i$ with $i\in I$ form a sunflower and the conditions $\psi_i$ with $i\in I$ are $\log(2|H||H_1|^t/\e)e(H_1)^2$-separated.
\end{remark}

We are now ready to prove the main result of the section.

\begin{proof}[Proof of Theorem \ref{Compression in the groups case for arbitrary alphabets}] We prove by induction on the size of $|H_1|$ and on the radius $r_{H_1}$ that for every $\e>0$ the theorem holds (with bounds depending only on $\e$, $G$, $S$, $H$, $H_1$, $r_{H_1}$) for every set of conditions $(\theta_i+\psi_i,E)$ such that each $\theta_i$ is a map from $G$ to $H$, each $\psi_i$ is a map from $G$ to $H_1$, and the $\theta_i$ are contained in a symmetric ball of radius at most $r_{H_1}$.

We again write $\theta_i=\g+\zeta_i$, where the $\zeta_i$ have disjoint supports of size at most $r_{H_1}$ and the same coefficient distribution.

We first describe the inductive step in the case where $r_{H_1} > 0$ and $H_1$ is not the trivial subgroup. The cases where  $H_1 = \{e\}$ or $r_{H_1} = 0$ can be dealt with by a degenerate version of the same argument, as we will explain at the end of the proof.

Suppose that the result holds for all $\e>0$ and for all pairs $(H,r)$ such that $H$ is a strict subgroup of $H_1$, or such that $H=H_1$ and $r < r_{H_1}$. Let $\e>0$. Then by Lemma \ref{Compression of symmetric sunflower/separated pairs in the groups case} (and Remark \ref{remark about bounds}) we are done if we can find $t_{H_1}=\lceil\log(2|H|/\e)|S|^{e(H)^2(\log(2|H|))^2}|H_1|\rceil$ of the maps such that the maps $\theta_i$ form a sunflower with centre $\g$ and the maps $\psi_i$ are $(r_{H_1}t_{H_1}+r_{H_1}')$-separated for $r_{H_1}' = \lceil \log(2|H||H_1|^m/\e)e(H_1)^2 \rceil$. If we cannot find such a set of maps which is that large, then let $M$ be a maximal set of such maps.


Then for each $i$, the support of the map $\zeta_i$ contains a support element inside $Z$, or there exists a non-trivial character $\chi\in\hat H_1$ such that $\chi\circ\psi_i$ has support distance at most $r_{H_1}t_{H_1}+r_{H_1}'$ from a product of the form $\prod_{m\in M}\chi_m\circ\psi_m$, meaning that the set of $j$ such that \[x_j \to \chi_i(\psi_{ij}(x_j)) (\prod_{m \in M} \chi_i(\psi_{mj}(x_j)))^{-1}\] is constant on $S$ has size at most $r_{H_1}t_{H_1}+r_{H_1}'$. By Remark \ref{union remark} we may assume that the first situation occurs for all $i$, or that the second situation occurs for all $i$.

Assume that we are in the first situation. Let $d$ be the minimal size of a generating set of $G$. The number of homomorphisms from $G$ to $H$ is at most $|H|^d$, so the number of pairs $(\xi,z)$ where $\xi$ is a non-zero homomorphism from $G$ to $H$ and $z$ is an element of $Z$ is at most $|H|^d|M|$. We now partition the set of indices $i$ into sets $I_{(\xi,z)}$ in such a way that for every pair $(\xi,z)$, the coefficient of the map $\xi_i$ at $z$ is equal to $\xi$, in our generalized sense of the word ``coefficient".

We claim now that each set $\{\theta_i:i\in I_{(\xi,z)}\}$ is contained in a ball of radius at most $r_{H_1}-1$. To see this, pick $\xi$ and $z$ as above, and let $\g':G^n\to H$ be the homomorphism that takes $x$ to $\g + \xi(x_q)$. Then each map $\theta_i-\g'$ with $i\in I_{(\xi,z)}$ is a homomorphism of support size at most $r_{H_1}-1$, so the homomorphisms $\theta_i= \g' + (\theta_i-\g')$ live in a ball of radius at most $r_{H_1}-1$ with centre $\g'$. By Remark  \ref{remark about bounds} it suffices to approximate each set $\{(\theta_i + \psi_i, E_i): i \in I_{(\xi,z)}\}$, which in turn we can do by the inductive hypothesis.

Assume that we are instead in the second situation. There are at most $|H_1|^{t_{H_1}+1}$ choices for $\chi$ and the $\chi_m$ with $m \in M$, and therefore by Remark \ref{union remark} we may assume that these choices are the same for each $j$. We write $\Phi$ for the corresponding set of maps $\phi_i = \theta_i + \psi_i$. We now have that there exist a non-trivial character $\chi\in\hat H_1$ and a character $\chi_m$ for each $m \in M$ such that for every $\phi=\theta+\psi\in\Phi$ the composition $\chi\circ\psi$ is within support distance $r_{H_1}t_{H_1}+r_{H_1}'$ of $\prod_{m \in M}\chi_m \circ\psi_m$.

Let us now choose an arbitrary $\phi_0=\theta_0+\psi_0\in\Phi$. Then for each $\phi=\theta+\psi\in\Phi$, the homomorphism $\chi\circ(\psi-\psi_0)$ has support size at most $2(r_{H_1}t_{H_1}+r_{H_1}')$, and therefore we can write $\psi-\psi_0$ as a sum of a map $\theta'$ of support size at most $2(r_{H_1}t_{H_1}+r_{H_1}')$ and a map $\psi'$ that takes values in the subgroup $H_2=\ker\chi$ of $H_1$. Since $\chi$ is not the trivial character, $H_2$ is not the whole of $H_1$. 

To summarize, we can write each $\phi_i=\theta_i+\psi_i$ as $\theta_i+\theta_i'+\psi_0+\psi_i'$, where the $\theta_i$ form a symmetric sunflower of radius at most $r_{H_1}$, the $\theta_i'$ have support size at most $2(r_{H_1}t_{H_1}+r_{H_1}')$, $\psi_0$ is fixed, and the $\psi_i'$ take values in $H_2$. Note that the maps $\theta_i+\theta_i'+\psi_0$ are contained in a ball of radius at most $r_{H_2}=r_{H_1}(2t_{H_1}+1)+2r_{H_1}'$. 

Let us write $\theta_i+\theta_i'+\psi_0=\gamma+\eta_i$, where $\g$ is the centre of the ball and $\eta_i$ is a map of support size at most $r_{H_2}$. There are at most $|H|^d$ homomorphisms from $G$ to $H$ (where again $d$ is the minimal size of a generating set of $G$), and therefore at most $|H|^{dr_{H_2}}$ possible coefficient distributions for the $\eta_i$. So by Remark \ref{union remark} we may assume that the $\eta_i$ have the same coefficient distribution, and then conclude by the inductive hypothesis. The inductive step is now complete.

In the case where $H_1 = \{e\}$ and $r_{H_1}=0$, which is the base case of the induction, the result is immediate. If only the first or only the second of these equalities hold, then the inductive step is as described above, but with only the second situation or only the first situation, respectively, rather than both of them.


It remains to remark that we assumed that there was a set $E$ such that $E_i=E$ for all the conditions $(\phi_i,E_i)$ in the collection. As usual, this is permissible by Remark \ref{union remark}.
\end{proof}

\section{Polynomial conditions with restricted alphabets} \label{Section: Compression of polynomial conditions in restricted subsets of F_p^k to a single condition}

The approximation results we have proved so far have concerned linear conditions. The aim of this section is to prove an approximation result for polynomial conditions. Let $V$ be a $k$-dimensional vector space over $\F_p$. We shall call a map $\phi:\F_p^n\to V$ a \emph{polynomial map of degree at most} $d$ if the map $a^*\phi:\F_p^n\to\F_p$ defined by $x\mapsto a^*\phi(x)$ is a polynomial of degree at most $d$ for each $a^*\in V^*$. If $V=\F_p^k$, then we can regard a polynomial map $\phi:\F_p^n\to\F_p^k$ as a $k$-tuple $(\phi_1,\dots,\phi_k)$ of polynomials on $\F_p^n$, as we did in the introduction, in which case the degree is simply the maximum degree of the $\phi_i$, but for most of the argument it will be convenient to think of polynomials in this more basis-free way. (However, we do not replace $\F_p^n$ by an arbitrary $n$-dimensional vector space, since we are considering sets of the form $S^n$, which are tied to the standard basis of $\F_p^n$.)

Thus, we now define a \emph{$k$-dimensional condition with degree at most $d$} to be a pair $(\phi,E)$, where $\phi:\F_p^n\to V$ for some $k$-dimensional vector space $V$ over $\F_p$, $\phi$ is a polynomial map of degree at most $d$, and $E\subset V$. We shall prove Theorem \ref{Compression of polynomial conditions to a single condition}, which stated that for every $\e>0$, every prime $p$, every subset $S\subset\F_p$, every positive integer $k$ and every positive integer $d$ there exists $K$ such that every set of $k$-dimensional conditions with degree at most $d$ on $S^n$ can be $\e$-approximated by a polynomial condition of degree at most $d$ and dimension at most $K$. 

As commented in the introduction, this result is a little different from our earlier approximation results, since we approximate by a single condition, but to pay for that we allow the dimension of that condition to be larger than the dimensions of the conditions we started with. It is not clear whether the natural direct generalization of our linear results is true -- there does not seem to be an obvious analogue of the sunflower structure that played an important role in the proofs.

Given a $k$-dimensional space $V$ and a polynomial map $\phi:\F_p^n\to V$ of degree at most $d$, for each $0\leq i\leq d$ let $W_i^*\subset V^*$ be the space of all $a^*\in V^*$ such that $a^*\phi$ has degree at most $i$. We can take a basis of $W_0^*$, extend it to a basis of $W_1^*$, and so on. Let $V_i^*$ be the subspace of $V^*$ spanned by the basis vectors that belong to $W_i^*\setminus W_{i-1}^*$. (Since the basis is not uniquely determined, the subspaces $V_i^*$ are not uniquely determined either, but this will not matter to us.) Let $k_i=\dim V_i^*$ and note that $V^*=V_0^*\oplus\dots\oplus V_d^*$, so $k_0+k_1+\dots+k_d=k$. We shall call the sequence $(k_0,k_1,\dots,k_d)$ the \emph{degree profile} of the polynomial $\phi$. Let $V=V_0\oplus\dots\oplus V_d$ be the corresponding decomposition of $V$ -- that is, we take the dual basis and the subspaces spanned by the corresponding subsets. Let $P_i$ be the projection from $V$ to $V_i$ (that is, the coordinate projection with respect to this basis) and note that if $a^*$ is a non-zero element of $V_i^*$, then $a^*P_i\phi$ has degree exactly $i$.

We shall prove Theorem \ref{Compression of polynomial conditions to a single condition} by induction on the degree profile in the colex ordering. That is, we shall prove the result for polynomial conditions where the polynomials have a given degree profile $(k_0,k_1,\dots,k_d)$ assuming the result for polynomial conditions with degree profile $(k_0',k_1',\dots,k_d')$ such that there exists $i$ for which $k_i'<k_i$ and $k_j'=k_j$ for every $j>i$.

The proof will appear somewhat simpler than several of the proofs in earlier sections. This is partly because allowing the dimension to increase gives us considerable extra flexibility, but the simplicity is also partly illusory, as the proof relies on the main result of \cite{G and K equidistribution}, which in turn relies on one of the main results of \cite{K}. 

The result from \cite{G and K equidistribution} concerns the equidistribution of high rank polynomials on sets of the form $S^n$, where $S\subset\F_p$ is non-empty. When $S=\F_p$ it is a result of Green and Tao \cite[Theorem 1.7]{Green and Tao}. Before stating it, let us define a notion of rank for polynomials on restricted alphabets. The definition we give here is less granular than the one we gave in the paper \cite{G and K equidistribution}, but it suffices for our purposes and will be slightly more convenient to use.

\begin{definition} \label{rank of a polynomial over F_p} Let $p$ be a prime, let $d$ be a nonnegative integer, and let $P: \F_p^n \rightarrow \F_p$ be a polynomial with degree at most $d$. We define the \emph{degree-$d$ rank} $\rk_d P$ of $P$ as follows.

\begin{enumerate}
\item If $d=0$ then $\rk_d P = 0$.
\item If $d=1$ then $\rk_d P$ is the number of indices $i \in \lbrack n \rbrack$ such that $a_{i} \neq 0$ when we represent $P$ in the form $P(x) = c + \sum_{i=1}^n a_i x_i$, where $c, a_1, \dots, a_n \in \F_p$ -- that is, the size of the support of the linear form that differs from $P$ by an additive constant.
\item If $d \ge 2$ then $\rk_d P$ is the smallest nonnegative integer $r$ such that there exists a function $F: \F_p^r \rightarrow \F_p$ and polynomials $P_1, \dots, P_r: \F_p^n \rightarrow \F_p$ each with degree strictly smaller than $\deg P$ such that $P = F(P_1, \dots, P_r)$. 
\end{enumerate} 
If $S$ is a non-empty subset of $\F_p$, then the \emph{degree-$d$ rank of $P$ with respect to $S$}, denoted by $\rk_{d,S} P$, is $\min_{P_0} \rk_d (P-P_0)$, where the minimum is taken over all polynomials $P_0: \F_p^n \rightarrow \F_p$ such that $P_0(S^n) = \{0\}$. Equivalently, it is the minimum degree-$d$ rank of any polynomial $Q$ that agrees with $P$ on $S^n$.
\end{definition}

The next result is very similar to Theorem 1.4 of \cite{G and K equidistribution} and is an immediate consequence of it. It shows that the values of a high-rank polynomial are approximately equidistributed on sets of the form $S^n$.

\begin{theorem} \label{Equidistribution of polynomials} For every prime $p$, every non-empty subset $S\subset\F_p$, every positive integer $d<p$, and every $\e>0$ there exists $\tau$ such that if $P:\F_p^n\to\F_p$ is any polynomial of degree $d$ with $\rk_{d,S}(P)\geq\tau$, then $|\E_{x \in S^n} \omega_p^{P(x)}| \le \epsilon$. \end{theorem}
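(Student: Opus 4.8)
The plan is to deduce this statement directly from the equidistribution result for high-rank polynomials on restricted alphabets proved in \cite{G and K equidistribution} (namely its Theorem~1.4), which for $S=\F_p$ specialises to Theorem~1.7 of \cite{Green and Tao}. There are only two points to attend to. First, the quantity $\E_{x\in S^n}\omega_p^{P(x)}$ depends only on the restriction of $P$ to $S^n$: if $Q:\F_p^n\to\F_p$ is any polynomial of degree at most $d$ agreeing with $P$ on $S^n$, then $\E_{x\in S^n}\omega_p^{P(x)}=\E_{x\in S^n}\omega_p^{Q(x)}$. We may therefore pick $Q$ with $\rk_d Q=\rk_{d,S}P\geq\tau$ and bound the exponential sum of $Q$ instead. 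Second, the hypothesis of Theorem~1.4 of \cite{G and K equidistribution} is a lower bound on the (more granular) notion of rank used in that paper, so we must check that a lower bound of $\tau$ on the degree-$d$ rank of Definition~\ref{rank of a polynomial over F_p} forces that finer rank to be at least some $\tau'=\tau'(\tau)$ with $\tau'\to\infty$ as $\tau\to\infty$.

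This rank comparison goes in the favourable direction and is routine. As remarked just before Definition~\ref{rank of a polynomial over F_p}, the notion of rank defined there is \emph{less granular} than the one used in \cite{G and K equidistribution}; concretely, any representation of a degree-$d$ polynomial in the form $F(P_1,\dots,P_r)$ with each $\deg P_i<d$ that witnesses a small rank in the sense of \cite{G and K equidistribution} is, after forgetting the finer bookkeeping, also such a representation in the sense of Definition~\ref{rank of a polynomial over F_p}; the same holds for the $S$-relative versions, where one is in addition allowed to subtract a polynomial vanishing on $S^n$. Hence $\rk_{d,S}(Q)$ is at most the corresponding rank of $Q$ in the sense of \cite{G and K equidistribution}, so $\rk_{d,S}(Q)\geq\tau$ indeed forces the latter to be at least $\tau$. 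Taking $\tau$ to be the rank threshold that Theorem~1.4 of \cite{G and K equidistribution} produces for the parameters $p$, $d$, $S$ and $\e$ (the hypothesis $d<p$ ensures that the degree of $P$ coincides with its degree as a function on $\F_p^n$, so that this machinery, and that of \cite{K} on which it rests, applies), we conclude $|\E_{x\in S^n}\omega_p^{Q(x)}|\leq\e$, and hence $|\E_{x\in S^n}\omega_p^{P(x)}|\leq\e$.

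There is no substantive obstacle here: all of the analytic work is already carried out in \cite{G and K equidistribution} (and ultimately in \cite{K}). The only point needing genuine care is to line up the two $S$-relative notions of rank correctly --- in particular to check that the minimisation over polynomials agreeing with $P$ on $S^n$ is taken consistently in the two formalisms --- so that ``high degree-$d$ rank with respect to $S$'' is at least as strong a hypothesis as the one under which the cited equidistribution theorem is stated. Once that bookkeeping is in place, the theorem follows by quoting \cite{G and K equidistribution}.
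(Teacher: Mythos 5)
Your proposal matches the paper's treatment: the paper states that this theorem ``is very similar to Theorem 1.4 of \cite{G and K equidistribution} and is an immediate consequence of it,'' offering no further argument, and your deduction---replacing $P$ by a minimiser $Q$ agreeing with it on $S^n$ and checking that the less granular rank of Definition \ref{rank of a polynomial over F_p} bounds the finer rank of the cited paper from below so that the hypothesis transfers in the right direction---is exactly the routine bookkeeping implicit in that remark. So the proposal is correct and takes essentially the same approach as the paper.
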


We shall also simplify the argument in a superficial way by being less explicit about how various bounds depend on each other. Given a property $\kappa$ of polynomial conditions such that every condition that satisfies $\kappa$ has degree at most $d$, let us say that conditions that satisfy $\kappa$ are \emph{boundedly approximable on $S^n$}, or simply that they are \emph{approximable}, if for every $\e>0$ there exists $K=K(\e,\kappa,S)$ such that every set of polynomial conditions that satisfy $\kappa$ can be approximated by a polynomial condition of degree at most $d$ and dimension at most $K$. We shall frequently make implicit use of the following lemma, which is very similar to Remark \ref{union remark}.

\begin{lemma} \label{union lemma}

Let $\kappa$ be a property of polynomial conditions, and let $\mathcal{P}$ be a family of properties of polynomial conditions. Suppose that there exists a constant $M=M(\kappa,S)$ such that every set of conditions $\mathcal C$ that satisfy $\kappa$ is a union of at most $M$ sets of conditions each satisfying a property of $\mathcal{P}$. If for every $\lambda \in \mathcal{P}$, conditions that satisfy $\lambda$ are approximable on $S^n$, then conditions that satisfy $\kappa$ are approximable on $S^n$.

\end{lemma}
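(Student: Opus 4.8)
The plan is to prove Lemma \ref{union lemma} by a direct counting argument that mirrors the proof of Remark \ref{union remark}, keeping track of how a union of approximations combines, and then bounding the dimension of the combined condition. First I would fix $\e>0$ and a family $\mathcal{C}$ of conditions satisfying $\kappa$. By the hypothesis on $\mathcal{P}$, we can write $\mathcal{C}=\mathcal{C}_1\cup\dots\cup\mathcal{C}_q$ with $q\leq M(\kappa,S)$, where each $\mathcal{C}_j$ satisfies some property $\lambda_j\in\mathcal{P}$. Since conditions satisfying $\lambda_j$ are approximable on $S^n$, there is a constant $K_j=K(\e/q,\lambda_j,S)$ such that $\mathcal{C}_j$ can be $(\e/q)$-approximated by a single polynomial condition $(\psi_j,F_j)$ of degree at most $d$ and dimension at most $K_j$.

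The next step is to combine these $q$ single conditions into one. I would form the polynomial map $\psi=(\psi_1,\dots,\psi_q):\F_p^n\to V_1\oplus\dots\oplus V_q$ (where $V_j$ is the target of $\psi_j$), which has degree at most $d$ since each $\psi_j$ does, and dimension at most $K_1+\dots+K_q\leq qK$ where $K=\max_j K(\e/q,\lambda_j,S)$ (here one uses that $\mathcal{P}$, although possibly infinite, yields only finitely many distinct bounds relevant to the finitely many $\lambda_j$ arising — or, more cleanly, one takes $K(\e,\kappa,S)$ to be $M\cdot\max\{K(\e/M,\lambda,S):\lambda\in\mathcal{P}\text{ relevant}\}$, which is a legitimate constant depending only on $\e,\kappa,S$). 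Set $F=F_1\times\dots\times F_q$. Then $x$ satisfies $(\psi,F)$ if and only if $x$ satisfies every $(\psi_j,F_j)$, so the satisfying set of $(\psi,F)$ is the intersection of the satisfying sets of the $(\psi_j,F_j)$.

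For the approximation property, I would check containment and the density bound separately. Containment: if $x\in S^n$ satisfies $(\psi,F)$ then it satisfies each $(\psi_j,F_j)$, hence lies in the satisfying set of $\mathcal{C}_j$ for every $j$ (using that $(\psi_j,F_j)$ internally approximates $\mathcal{C}_j$), hence in the satisfying set of $\mathcal{C}=\bigcup_j\mathcal{C}_j$. Density: writing $V(\mathcal{C}_j)$ for the satisfying set of $\mathcal{C}_j$ and $U_j$ for that of $(\psi_j,F_j)$, the satisfying set of $\mathcal{C}$ is $\bigcap_j V(\mathcal{C}_j)$ and that of $(\psi,F)$ is $\bigcap_j U_j$, so
\[\Big(\bigcap_j V(\mathcal{C}_j)\Big)\setminus\Big(\bigcap_j U_j\Big)\subseteq\bigcup_j\big(V(\mathcal{C}_j)\setminus U_j\big),\]
and each $V(\mathcal{C}_j)\setminus U_j$ has density at most $\e/q$ in $S^n$, so the left-hand side has density at most $\e$. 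Thus $(\psi,F)$ internally $\e$-approximates $\mathcal{C}$ and has degree at most $d$ and dimension at most a constant depending only on $\e,\kappa,S$, which is exactly what is needed.

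I do not expect any real obstacle here; the lemma is essentially bookkeeping. The only point requiring a little care is the handling of the constant when $\mathcal{P}$ is an infinite family: one must ensure that the supremum of $K(\e/M,\lambda,S)$ over the (finitely many) properties $\lambda$ actually used in the decomposition of a given $\mathcal{C}$ is bounded by a quantity depending only on $\e,\kappa,S$. In practice this is guaranteed by how the lemma is applied — the finitely many relevant $\lambda$ are the same for all $n$, so the constants $K(\cdot,\lambda,S)$ are finite in number — and for the statement as phrased it suffices to define $K(\e,\kappa,S)$ as $M$ times the relevant maximum, which is legitimate. The degree-preservation is automatic from taking a direct sum of polynomial maps, and the colex induction underlying Theorem \ref{Compression of polynomial conditions to a single condition} is not needed at all for this lemma.
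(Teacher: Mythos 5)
Your proof is correct and follows essentially the same route as the paper: decompose $\mathcal C$ into at most $M$ pieces, internally $(\e/M)$-approximate each piece by a single condition, combine these via the direct product $(\psi,F)=((\psi_1,\dots,\psi_q),F_1\times\dots\times F_q)$, and conclude with exactly the same containment observation and union bound. Your side remark about uniformity of the constant over the relevant $\lambda\in\mathcal P$ is a reasonable piece of bookkeeping that the paper leaves implicit, but it does not change the argument.
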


\begin{proof}
Let $\e>0$ and let $\mathcal C$ be a set of conditions that satisfy $\kappa$ and write $\mathcal C$ as $\cc_1\cup\dots\cup\cc_m$, where $m\leq M$ and the conditions in each $\cc_i$ satisfy $\lambda_i$. For each set $\cc_i$, let $(\phi_i,E_i)$ be a condition that $(\e/m)$-approximates $\cc_i$ (or more precisely, the set $\{(\phi_i,E_i)\}$ approximates $\cc_i$), where $E_i\subset\F_p^{k_i}$ for some $k_i\leq K$. Then let $k=k_1+\dots+k_m$, let $\phi:S^n\to\F_p^k$ be defined by $\phi(x)=(\phi_1(x),\dots,\phi_m(x))$, and let $E=E_1\times\dots\times E_m$. Then $x$ satisfies $(\phi,E)$ if and only if it satisfies $(\phi_i,E_i)$ for each $i$. Also, if $x$ satisfies $\cc$ but not $(\phi,E)$, then there is some $i$ such that it fails to satisfy $(\phi_i,E_i)$ but satisfies $\cc_i$. For each $i$, the density of such $x$ is at most $\e/m$, so we are done by a union bound. \end{proof}

If we have a set of conditions that we wish to approximate, this lemma allows us to partition it into a bounded number of pieces and approximate each piece. We have done this several times already in the paper, but from now on we shall not bother to give a careful justification each time we do so.

We shall also use the following more elaborate version of the principle, which relies heavily on the fact that we allow ourselves to increase the dimensions of conditions as we approximate. If $\kappa$ is a property of polynomials, we shall say that a set of conditions satisfies $\kappa$ if the corresponding polynomials satisfy $\kappa$.

\begin{lemma} \label{ball lemma}
Let $p$ be a prime, let $S$ be a non-empty subset of $\F_p$, let $k$ and $d$ be positive integers, let $V$ be a $k$-dimensional vector space over $\F_p$, let $\rho:\F_p^n\to V$ be a polynomial map of degree at most $d$, and let $\kappa$ and $\lambda$ be properties of polynomials that imply that they have degree at most $d$. Suppose that $k$ is bounded above by a function of $p,S$ and $d$. Suppose also that for every $y\in V$ and every polynomial $\phi$ that satisfies $\kappa$ there is a polynomial $\psi$ that satisfies $\lambda$ such that $\phi$ and $\psi$ agree on the set $\{x\in\F_p^n:\rho(x)=y\}$. Finally, suppose that polynomial conditions that satisfy $\lambda$ are approximable on $S^n$. Then polynomial conditions that satisfy $\kappa$ are approximable on $S^n$. 
\end{lemma}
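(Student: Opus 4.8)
The plan is to fibre $S^{n}$ over the level sets of $\rho$, to replace the $\kappa$-polynomials by $\lambda$-polynomials on each fibre, to approximate on each fibre using approximability of $\lambda$, and then to exploit the freedom to enlarge the dimension in order to glue the fibrewise approximations back together into a single condition.

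First I would record the partition. Since $k=\dim V$ is bounded in terms of $p,S,d$, the set $V$ has bounded size $p^{k}$, and the sets $L_{v}=\{x\in S^{n}:\rho(x)=v\}$, for $v\in V$, partition $S^{n}$ into at most $p^{k}$ pieces. Let $\mathcal{C}=\{(\phi_{i},E_{i}):i\in I\}$ be a set of conditions to be approximated, with each $\phi_{i}$ satisfying $\kappa$, and write $A$ for its satisfying set inside $S^{n}$. For each $v\in V$ and each $i\in I$, the hypothesis supplies a polynomial $\psi_{i}^{v}$ satisfying $\lambda$ that agrees with $\phi_{i}$ on $\{x\in\F_{p}^{n}:\rho(x)=v\}$, hence on $L_{v}$. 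The conditions $(\psi_{i}^{v},E_{i})$, $i\in I$, all satisfy $\lambda$, so approximability of $\lambda$-conditions on $S^{n}$ furnishes a single condition $(\tau_{v},F_{v})$, with $\tau_{v}$ a polynomial map of degree at most $d$ into a space of dimension $k_{v}'\le K(\e p^{-k},\lambda,S)$, that internally $(\e p^{-k})$-approximates $\{(\psi_{i}^{v},E_{i}):i\in I\}$ on $S^{n}$. Write $A_{v}$ and $B_{v}$ for the satisfying sets inside $S^{n}$ of $\{(\psi_{i}^{v},E_{i}):i\in I\}$ and of $(\tau_{v},F_{v})$ respectively, so $B_{v}\subseteq A_{v}$ and $|A_{v}\setminus B_{v}|\le\e p^{-k}|S^{n}|$. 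Since $\phi_{i}$ and $\psi_{i}^{v}$ agree on $L_{v}$, we also have $A\cap L_{v}=A_{v}\cap L_{v}$ for every $v$, and therefore $A=\bigsqcup_{v\in V}(A_{v}\cap L_{v})$.

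The glueing step is where the extra dimension is spent. Let $\Theta:\F_{p}^{n}\to\big(\prod_{v\in V}\F_{p}^{k_{v}'}\big)\times V$ be the polynomial map $x\mapsto\big((\tau_{v}(x))_{v\in V},\,\rho(x)\big)$. Each of its coordinates is a coordinate of some $\tau_{v}$ or of $\rho$, so $\Theta$ has degree at most $d$, and its dimension $\sum_{v}k_{v}'+k\le p^{k}K(\e p^{-k},\lambda,S)+k$ is bounded in terms of $\e,p,S,d$ and the fixed data $\lambda,\rho,V$. Let $G$ be the set of pairs $\big((w_{v})_{v\in V},u\big)$ in the codomain of $\Theta$ with $w_{u}\in F_{u}$. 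Then $x\in S^{n}$ satisfies $(\Theta,G)$ precisely when $\tau_{u}(x)\in F_{u}$ for $u=\rho(x)$; equivalently, the satisfying set of $(\Theta,G)$ inside $S^{n}$ is $\bigsqcup_{v\in V}(B_{v}\cap L_{v})$.

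It remains to check that $(\Theta,G)$ internally $\e$-approximates $\mathcal{C}$. For containment, $B_{v}\cap L_{v}\subseteq A_{v}\cap L_{v}\subseteq A$ for each $v$, so the satisfying set of $(\Theta,G)$ is contained in $A$. For the density bound, $A\setminus\bigsqcup_{v}(B_{v}\cap L_{v})=\bigsqcup_{v}\big((A_{v}\setminus B_{v})\cap L_{v}\big)$, whose $v$-th piece has density at most $\e p^{-k}$ in $S^{n}$; summing over the $p^{k}$ values of $v$ gives density at most $\e$. Hence $(\Theta,G)$ is a single condition of bounded dimension and degree at most $d$ that internally $\e$-approximates $\mathcal{C}$, as approximability of $\kappa$-conditions demands. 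The one genuinely new ingredient --- already flagged in the surrounding text --- is the encoding of all the fibrewise approximating conditions into a single condition by using the value of $\rho$ to select which fibre's data is read off; this is exactly what multiplies the ambient dimension by $p^{k}$, and it is why the hypothesis that $k$ is bounded is needed. The rest is routine degree- and dimension-bookkeeping together with a union bound over the $p^{k}$ fibres, so I expect no serious obstacle.
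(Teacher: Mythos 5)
Your proof is correct and takes essentially the same approach as the paper's: fibre over the values of $\rho$, swap each $\kappa$-polynomial for a $\lambda$-polynomial agreeing on the relevant fibre, approximate each resulting $\lambda$-family by a single bounded-dimension condition, and recombine by letting the value of $\rho$ select which approximating condition is read, with the same $\e p^{-k}$ error split over the $p^k$ fibres. The only cosmetic difference is that you construct the glued condition $(\Theta,G)$ directly, whereas the paper encodes the fibrewise statements via sets $E_y=\{(v,w):v=y\implies w\in E\}$ and then concatenates using Lemma \ref{union lemma}; the resulting conditions are logically equivalent.
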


\begin{proof}
Let $\cc$ be a set of conditions that satisfy $\kappa$. For each $y\in V$ and each condition $(\phi,E)\in\cc$, where $E$ is a subset of a vector space $W$, let 
\[E_y=\{(v,w)\in V\times W: v=y\implies w\in E\},\]
so that $((\rho,\phi),E_y)$ is the condition $\rho(x)=y\implies\phi(x)\in E$.

For each $y\in V$, let $\cc_y$ be the set of conditions $\{((\rho,\phi),E_y):(\phi,E)\in\cc\}$. If $x$ satisfies $(\phi,E)$ then it clearly satisfies $((\rho,\phi),E_y)$ for every $y\in V$, and the converse is true as well, since if we let $y=\rho(x)$, then the fact that $x$ satisfies $((\rho,\phi),E_y)$ guarantees that $\phi(x)\in E$, as noted above. Therefore, the satisfying sets of $\cc$ and $\bigcup_{y\in V}\cc_y$ are the same. 

By Lemma \ref{union lemma} it is therefore sufficient to prove that each set $\cc_y$ of conditions is approximable. Let us fix $y\in V$, and for each $(\phi,E)\in\cc$ let $\tilde\phi$ be a polynomial that satisfies $\lambda$ and that agrees with $\phi$ on $\{x\in\F_p^n:\rho(x)=y\}$. Let $\cd_y$ be the set of conditions $(\tilde\phi,E):(\phi,E)\in\cc\}$ and let $\tilde\cc_y$ be the set of conditions $\{((\rho,\tilde\phi),E_y):(\phi,E)\in\cc_y\}$. Note that the satisfying set of $\tilde\cc_y$ is the same as that of $\cc_y$.

Since the polynomials $\tilde\phi$ satisfy $\lambda$, and polynomial conditions that satisfy $\lambda$ are approximable, $\cd_y$ is approximable by a condition $(\theta,F)$. It follows that $\tilde\cc_y$ is approximable by the condition $((\rho,\theta),F_y)$. \end{proof}

We are now ready to begin the proof in earnest. The basic strategy is the same as it has been with our earlier proofs: either the conditions we wish to approximate contain a large set of ``sufficiently independent" conditions, in which case the satisfying set has small density, or there is a small set of conditions that in some suitable sense ``approximately generates" all the other conditions, in which case we can use Lemma \ref{ball lemma} to reduce to a simpler case that is covered by an inductive hypothesis.

Let us begin with a standard condition for determining whether a function taking values in a $k$-dimensional vector space over $\F_p$ is approximately equidistributed. 

\begin{lemma} \label{general equidistribution lemma}
Let $X$ be a finite set, let $p$ be a prime, let $k$ be a positive integer, let $V$ be a $k$-dimensional vector space over $\F_p$, and let $\phi:X\to V$. Then for every $y\in V$,
\[\P[\phi(x)=y]-p^{-k}|\leq p^{-k}\sum_{{a^*\in V^*}\atop{a^*\ne 0}}|\E_x\omega_p^{a^*(\phi(x))}|.\]
\end{lemma}

\begin{proof}
Note first that $\P[\phi(x)=y]=\E_x \mathbbm 1_y(\phi(x))$, where we write $\mathbbm 1_y$ for the characteristic function of the singleton set $\{y\}$. By the Fourier inversion formula,
\[\E_x\mathbbm 1_y(\phi(x))=\E_x\sum_{a^*\in V^*}\widehat{\mathbbm 1_y}(a^*)\omega_p^{a^*(\phi(x))}=p^{-k}\sum_{a^*\in V^*}\E_x\omega_p^{a^*(\phi(x)-y)}.\]
The result follows. 
\end{proof}

The above lemma immediately implies a version for several functions, but it is convenient to state it separately.

\begin{corollary} \label{equidistribution lemma for many functions}
Let $X$ be a finite set, let $p$ be a prime, let $r$ and $k_1,\dots,k_r$ be positive integers, let $k=\sum_ik_i$, for each $i\leq r$ let $V_i$ be a vector space over $\F_p$ of dimension $k_i$, and let $\phi_i:X\to V_i$. Then for every $(y_1,\dots,y_r)\in V_1\times\dots\times V_r$,
\[|\P[\forall i\ \phi_i(x)=y_i]-p^{-r}|\leq p^{-r}\sum_{{a^*\in V_1^*\times\dots\times V_r^*}\atop{a^*\ne 0}}|\E_x\omega_p^{\sum_ia_i^*(\phi_i(x))}|.\]
\end{corollary}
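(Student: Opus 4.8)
The plan is to reduce Corollary \ref{equidistribution lemma for many functions} directly to Lemma \ref{general equidistribution lemma} by packaging the $r$ functions into a single function. First I would set $V=V_1\oplus\dots\oplus V_r$, which is a vector space over $\F_p$ of dimension $k=\sum_i k_i$, and define $\phi:X\to V$ by $\phi(x)=(\phi_1(x),\dots,\phi_r(x))$. Then for $y=(y_1,\dots,y_r)\in V$ the event $\{\phi(x)=y\}$ is exactly the event $\{\forall i\ \phi_i(x)=y_i\}$, so $\P[\forall i\ \phi_i(x)=y_i]=\P[\phi(x)=y]$.

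Next I would apply Lemma \ref{general equidistribution lemma} to this $\phi$, which gives
\[|\P[\phi(x)=y]-p^{-k}|\leq p^{-k}\sum_{{a^*\in V^*}\atop{a^*\ne 0}}|\E_x\omega_p^{a^*(\phi(x))}|.\]
The only remaining point is to identify $V^*$ with $V_1^*\times\dots\times V_r^*$ in the standard way: an element $a^*\in V^*$ corresponds to a tuple $(a_1^*,\dots,a_r^*)$ with $a_i^*\in V_i^*$, and under this identification $a^*(\phi(x))=\sum_i a_i^*(\phi_i(x))$, while $a^*\ne 0$ iff the tuple is nonzero. Substituting this in yields precisely the claimed inequality.

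There is a small discrepancy to note and address: as stated, the corollary's left-hand side has $p^{-r}$ (and the prefactor $p^{-r}$) rather than $p^{-k}$, which would only be correct in general if each $k_i=1$ — otherwise the argument above produces $p^{-k}$. I would either state the result with $p^{-k}$ throughout (the version that actually follows from Lemma \ref{general equidistribution lemma}), or, if the intended application only ever uses one-dimensional factors $V_i$, explicitly restrict to $k_i=1$ so that $k=r$ and the two forms coincide. In any case this is a bookkeeping matter rather than a genuine obstacle; the proof is essentially a direct-sum repackaging plus Fourier inversion, so there is no hard step.
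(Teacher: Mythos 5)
Your proof is correct and is essentially the paper's own: the paper likewise just applies Lemma \ref{general equidistribution lemma} to the combined map $x\mapsto(\phi_1(x),\dots,\phi_r(x))$ taking values in $V_1\times\dots\times V_r$. Your observation that the argument actually yields $p^{-k}$ rather than the stated $p^{-r}$ is a correct catch of a typo in the statement (the two agree only when every $k_i=1$), and the $p^{-k}$ version is the one needed and implicitly used in Corollary \ref{bias implies low rank}.
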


\begin{proof}
We can apply the lemma to the map $\phi:X\to V_1\times\dots\times V_r$ defined by $x\mapsto(\phi_1(x),\dots,\phi_r(x))$.
\end{proof}

A simple further consequence of this corollary and Theorem \ref{Equidistribution of polynomials} is the following result.

\begin{corollary} \label{bias implies low rank}
For every $\e>0$, every pair of positive integers $d$ and $k$, every prime $p$ and every non-empty subset $S\subset\F_p$ there exist $R,\tau$ such that for every $r\geq R$, every sequence $V_1,\dots,V_r$ of vector spaces over $\F_p$ with $\dim V_i\leq k$ for each $i$, every sequence of polynomial maps $\phi_1,\dots,\phi_r$ of degree at most $d$ with $\phi_i:\F_p^n\to V_i$, and every sequence $E_1,\dots,E_r$ with $E_i$ a proper subset of $V_i$, if the satisfying set of the conditions $(\phi_i,E_i)$ has density greater than $\e$, then there exist $a_1^*,\dots,a_r^*$ with $a_i^*\in V_i^*$, not all zero, such that $\sum_ia_i^*\phi_i$ is a polynomial of degree $d'\leq d$ with $\rk_{d',S}(\sum_ia_i^*\phi_i)\leq\tau$.
\end{corollary}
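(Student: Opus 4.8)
The plan is to combine Corollary~\ref{equidistribution lemma for many functions} with Theorem~\ref{Equidistribution of polynomials} in the contrapositive direction. Suppose that for every choice of $a_i^* \in V_i^*$, not all zero, writing $d'$ for the degree of $Q := \sum_i a_i^* \phi_i$, we have $\rk_{d',S}(Q) > \tau$, where $\tau = \tau(\e', p, S, d)$ is the parameter supplied by Theorem~\ref{Equidistribution of polynomials} for a small error $\e'$ to be chosen. Then Theorem~\ref{Equidistribution of polynomials} gives $|\E_{x \in S^n} \omega_p^{Q(x)}| \le \e'$ for every such $Q$ of degree $d' \ge 1$; for the degenerate case where $Q$ has degree $0$, i.e. $Q$ is constant on $S^n$, one has to be slightly careful, and I address this below. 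Plugging this bound into Corollary~\ref{equidistribution lemma for many functions} applied with $X = S^n$ yields that $|\P_{x \in S^n}[\forall i\ \phi_i(x) = y_i] - p^{-r'}|$ is small, where $r' = \sum_i \dim V_i^*$; note $R \le r' \le kR$ if $r \ge R$, so $p^{-r'} \le p^{-R}$.

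From approximate equidistribution of the joint values $(\phi_1(x),\dots,\phi_r(x))$ I then bound the density of the satisfying set. Since each $E_i$ is a proper subset of $V_i$, we have $|E_i| \le |V_i| - 1 = p^{k_i} - 1$, and summing the near-uniform estimate over all $(y_1,\dots,y_r) \in E_1 \times \dots \times E_r$ gives that the density of the satisfying set is at most $\prod_i \big((|E_i|/p^{k_i}) + (\text{error})\big)$. Choosing $\e'$ small enough and $R$ large enough that the product of the main terms $\prod_i (1 - p^{-k_i}) \le (1 - p^{-k})^{r} \le (1-p^{-k})^{R}$ is below $\e/2$, and the accumulated error terms are also below $\e/2$, forces the density to be at most $\e$, contradicting the hypothesis. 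This yields the desired $R$ and $\tau$ (both depending only on $\e, d, k, p, S$). The bookkeeping here is routine: one just has to expand the product, bound the cross terms, and note there are at most $p^{\sum_i k_i}$ characters $a^* \ne 0$ so the total error is at most $p^{kr}$ times the per-character error, which one absorbs by taking $\e'$ exponentially small in a bound for $r$ — but since $r$ is not bounded, one instead keeps the bound in the form $p^{-r'}\sum_{a^*\neq 0}|\E_x \omega_p^{a^*\phi(x)}| \le p^{-r'} p^{r'} \e' = \e'$ and uses that the error per coordinate $y$ is uniformly at most $\e'$, so summing over $|E_1 \times \dots \times E_r| \le p^{kr}$ terms does create a factor $p^{kr}$; hence one actually compares against the main term $\prod_i |E_i| p^{-k_i} \le (1-p^{-k})^r$ which decays in $r$, and picks $R$ large so that $(1-p^{-k})^R < \e$ and separately handles the error by noting $\sum_{y \in E_1\times\dots\times E_r}(\text{error at }y) \le |V_1\times\dots\times V_r| \cdot \e' \le p^{kr}\e'$ — to beat this one needs $\e'$ small relative to $p^{kr}$, which is fine once we also cap $r$: if $r > R_0 := \lceil \log(2/\e)/\log((1-p^{-k})^{-1}) \rceil$ then even without any error term the main product is below $\e/2$... .

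Let me streamline the previous paragraph into the actual argument: split into two cases according to $r$. \emph{Case 1: $r \le R_0$}, where $R_0 := \lceil p^k \log(2/\e) \rceil$ say. Then the number of characters is at most $p^{k R_0}$, a bounded quantity, so choose $\e'$ small enough (depending on $\e, p, k, R_0$, hence on $\e, p, k, S$) that $p^{kR_0}\e' < \e/2$; the main term contribution is $\prod_i |E_i|p^{-k_i} \le (1 - p^{-k})^r$, and if this exceeds $\e/2$ then $r < R_1$ for some explicit $R_1$, so setting $R := \max(R_0, R_1) + 1$ we see that in Case 1 with $r \ge R$ the density is at most $\e/2 + \e/2 = \e$, a contradiction. \emph{Case 2: $r > R_0$}: then already $(1 - p^{-k})^r \le (1-p^{-k})^{R_0} < \e/2$, but now the error term $p^{kr}\e'$ is not controlled; however in this regime we do not need equidistribution for all $r$ characters — instead restrict attention to any fixed subset of $R_0$ of the conditions and apply the Case-1 analysis to those alone. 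The satisfying set of all $r$ conditions is contained in that of the chosen $R_0$, whose density we have just bounded by $\e$. So in all cases the density is at most $\e$, contradicting the hypothesis; hence the required $a_i^*$ and low-rank combination exist. Finally, for the degenerate situation where a combination $Q = \sum_i a_i^* \phi_i$ agrees on $S^n$ with a constant: then $\rk_{0,S}(Q) = 0 \le \tau$ and we are immediately done, so we may assume every nonzero combination has positive degree after subtracting the best $S^n$-vanishing polynomial, which is exactly the hypothesis we negated.

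The main obstacle is the one flagged above: Theorem~\ref{Equidistribution of polynomials} bounds the exponential sum only for a \emph{single} polynomial of a fixed degree, whereas Corollary~\ref{equidistribution lemma for many functions} requires a bound summed over the exponentially-many characters $a^* \in V_1^* \times \dots \times V_r^*$, and $r$ is unbounded. The resolution is the case split: when $r$ is bounded the number of characters is bounded and a single small $\e'$ suffices; when $r$ is large we throw away all but boundedly many conditions, using only that passing to a subfamily can only increase the density of the satisfying set. One technical point to verify is that for $a^* = (a_1^*,\dots,a_r^*)$ nonzero, the polynomial $\sum_i a_i^* \phi_i$ indeed has degree at most $d$ (clear, since each $a_i^* \phi_i$ does), and that its degree-$d'$ rank with respect to $S$ is well-defined for its actual degree $d'$ — this is exactly Definition~\ref{rank of a polynomial over F_p}, and the hypothesis $d < p$ is inherited so that Theorem~\ref{Equidistribution of polynomials} applies to each relevant degree $d' \le d$.
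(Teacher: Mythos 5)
Your proposal is correct and takes essentially the same route as the paper: both combine Corollary~\ref{equidistribution lemma for many functions} (applied with $X=S^n$) with Theorem~\ref{Equidistribution of polynomials} to conclude that if the satisfying set is dense then some nonzero combination $\sum_i a_i^*\phi_i$ has a large exponential sum and hence bounded rank. Your extra step of restricting to a subfamily of boundedly many conditions when $r$ is large is a harmless refinement that makes $\tau$ visibly independent of $r$ (the paper's proof, as written, produces a $\tau$ depending on $r$ and leaves this reduction implicit), and your separate treatment of combinations that are constant on $S^n$ is also fine.
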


\begin{proof}
If $(1-p^{-k})^R\leq\e/2$, then the density of the set $E_1\times\dots\times E_r$ in $V_1\times\dots\times V_r$ is at most $\e/2$, so by Corollary \ref{equidistribution lemma for many functions} with $X=S^n$ we find that
\[\P_{x\in S^n}[(\phi_1(x),\dots,\phi_r(x))\in E_1\times\dots\times E_r]\leq\e/2+\sum_{{a^*\in V_1^*\times\dots\times V_r^*}\atop{a^*\ne 0}}|\E_x\omega_p^{\sum_ia_i^*(\phi_i(x))}|.\]
Therefore, if the satisfying set of the conditions has density at least $\e$, then there exists a non-zero $a^*\in V_1^*\times\dots\times V_r^*$ such that
\[|\E_x\omega_p^{\sum_ia_i^*(\phi_i(x))}|\geq\e/2p^{rk}.\]

Let $\psi=\sum_ia_i^*\phi_i$ and let $d'$ be the degree of $\psi$. By Theorem \ref{Equidistribution of polynomials} there is a constant $\tau$ depending only on $p,S,\e,r$ and $k$ such that $\rk_{d',S}(\psi)\leq\tau$, which proves the result.
\end{proof}

We now prove an easy result that will turn out to be the base case of an inductive argument. It also has a very similar structure to that of the main proof. Let us define the \emph{support} $Z(\phi)$ of a linear map $\phi:\F_p^n\to V$ to be the set of coordinates on which $\phi$ depends. That is, if $e_1,\dots,e_n$ is the standard basis, then $Z(\phi)=\{i:\phi(e_i)\ne 0\}$. If $\phi$ is an affine map, then we define $Z(\phi)$ to be the support of its linearization, and we also call it the support.

\begin{lemma} \label{bounded support linear case}
Let $p$ be a prime, let $s$ and $k$ be positive integers and let $S$ be a non-empty subset of $\F_p$. Then conditions of the form $(\phi,E)$ where $\phi$ is an affine map from $\F_p^n$ to some vector space $V$ of dimension at most $k$, $E\subset V$, and $\phi$ has support size at most $s$, are approximable on $S^n$. 
\end{lemma}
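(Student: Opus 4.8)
The plan is to argue by induction on the support bound $s$, with the prime $p$ and the set $S$ fixed and $k$ allowed to be arbitrary. If $|S|=1$ then $S^n$ is a single point, so every satisfying set is either $\emptyset$ or $S^n$ and one trivially-unsatisfiable or trivially-satisfiable condition (for instance $(0,\emptyset)$ or $(0,V)$ with $V$ one-dimensional) suffices; so assume from now on that $|S|\geq 2$. When $s=0$ every $\phi_i$ is constant on $\F_p^n$, so each condition $(\phi_i,E_i)$ holds either throughout $S^n$ or nowhere on it; after discarding the former, either none remain, in which case the satisfying set is $S^n$, or some of the latter remain, in which case the satisfying set is empty, and in both cases a single condition realises it exactly.

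For the inductive step, assume the result for support bound $s-1$ and all $k$. Let $(\phi_i,E_i)_{i\in I}$ be conditions with each $\phi_i:\F_p^n\to V_i$ affine, $\dim V_i\leq k$ and $|Z(\phi_i)|\leq s$. First a cleanup: if some $E_i$ is empty the satisfying set is empty and we are done; discard every $i$ with $E_i=V_i$; and discard every $i$ for which $\phi_i$ takes no value outside $E_i$ on $S^n$, as those conditions hold throughout $S^n$. Each surviving $\phi_i$ then takes some value $v_i\notin E_i$ on $S^n$, and since it depends on at most $s$ coordinates it takes $v_i$ with probability at least $|S|^{-s}$, so $\P_{x\in S^n}[\phi_i(x)\in E_i]\leq 1-|S|^{-s}$. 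Fix $t=t(\e,s,S)$ with $(1-|S|^{-s})^t\leq\e$, say $t=\lceil|S|^s\log(\e^{-1})\rceil$.

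Now split into two cases. If some $t$ of the supports $Z(\phi_i)$ are pairwise disjoint, then the corresponding maps depend on disjoint blocks of coordinates, the events $\{\phi_i(x)\in E_i\}$ for those $i$ are independent when $x$ is uniform on $S^n$, and the satisfying set of the whole family has density at most $(1-|S|^{-s})^t\leq\e$; we approximate it by the empty set, realised by $(0,\emptyset)$. Otherwise, let $M$ be a maximal subset of $I$ for which the supports $Z(\phi_i)$, $i\in M$, are pairwise disjoint; then $|M|\leq t-1$, the ``core'' $Z:=\bigcup_{i\in M}Z(\phi_i)$ has size at most $(t-1)s$, and by maximality $Z(\phi_i)\cap Z\neq\emptyset$ for every $i\in I$. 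Partition $I$ into the sets $I_j=\{i\in I:\min(Z(\phi_i)\cap Z)=j\}$ for $j\in Z$; there are at most $(t-1)s$ of them, a number depending only on $\e,s,S$, so by the union-bound argument of Lemma \ref{union lemma} it is enough to approximate each family $\{(\phi_i,E_i):i\in I_j\}$. Every such condition satisfies the property $\kappa_j$ that $\phi$ is affine with codomain of dimension at most $k$, that $|Z(\phi)|\leq s$, and that the coefficient $\mu_\phi:=\phi(e_j)-\phi(0)$ of $\phi$ at coordinate $j$ is non-zero. Apply Lemma \ref{ball lemma} with $\rho:\F_p^n\to\F_p$ the $j$-th coordinate projection (degree $1$, one-dimensional codomain), with $\kappa=\kappa_j$, and with $\lambda$ the property that $\phi$ is affine with codomain of dimension at most $k$ and $|Z(\phi)|\leq s-1$: for any $y\in\F_p$ and any $\phi$ satisfying $\kappa_j$, the map $\psi(x)=\phi(x)-\mu_\phi(x_j-y)$ agrees with $\phi$ on $\{x:x_j=y\}$ and has support $Z(\phi)\setminus\{j\}$, hence satisfies $\lambda$. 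Conditions satisfying $\lambda$ are approximable by the inductive hypothesis, so Lemma \ref{ball lemma} shows that conditions satisfying $\kappa_j$ are approximable, with a bound that (relabelling coordinate $j$ as coordinate $1$) does not depend on $j$ or on $n$. Combining over $j\in Z$ completes the induction.

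The point that needs care is the second case: one must make sure that the partition into the families $I_j$ uses only boundedly many classes (this is why we extract the bounded core $Z$ first), that each class has precisely the form needed so that a single coordinate projection drops the support from $s$ to $s-1$ in Lemma \ref{ball lemma}, and that the resulting approximation constant is genuinely uniform in the coordinate $j$ and hence in $n$. Conceptually the argument turns on the dichotomy ``either many of the conditions have pairwise-disjoint supports, which makes the satisfying set exponentially small, or a bounded set of coordinates meets every support'' --- the support-size analogue of the $r$-separated-set dichotomy used in Section \ref{Section: Compression of linear conditions in restricted subsets of F_p} --- after which Lemmas \ref{union lemma} and \ref{ball lemma} carry out the reduction.
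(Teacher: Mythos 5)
Your proof is correct and follows essentially the same route as the paper's: induction on $s$, the dichotomy between a large disjointly-supported subfamily (giving density at most $\e$) and a bounded core $Z$ meeting every support, and then Lemma \ref{ball lemma} plus the inductive hypothesis. The only difference is cosmetic: the paper applies Lemma \ref{ball lemma} once with $\rho$ the projection onto all coordinates of $Z$, whereas you first partition the conditions by a coordinate $j\in Z$ hitting the support and fix only $x_j$; both reduce the support size to $s-1$ in the same way.
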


\begin{proof}
We prove the result by induction on $s$, observing that it is trivial when $s=0$. We now assume that the result holds for $s-1$ and prove it for $s$.

Let $\e>0$ and let $\cc$ be a set of $k$-dimensional affine conditions of support size at most $s$. We may assume that each $(\phi,E)$ is such that there exists $x\in S^n$ with $\phi(x)\notin E$, since we can throw away all conditions that do not satisfy this property.

Now let $\{(\phi_1,E_1),\dots,(\phi_r,E_r)\}$ be a maximal subset of $\cc$ such that the maps $\phi_1,\dots,\phi_r$ are disjointly supported. Then the events $\phi_i(x)\in E_i$ are independent, and each holds with probability at most $1-2^{-s}$. Therefore, if $(1-2^{-s})^r<\e$, we can approximate $\cc$ by a single condition $(\phi,\emptyset)$. 

If $(1-2^{-s})^r\geq\e$, then for every $(\phi,E)\in\cc$ the support of $\phi$ intersects the union $Z$ of the supports of $\phi_1,\dots,\phi_r$. It follows that $\phi$ agrees with a map of support size at most $s-1$ on each set of the form $\{x:\forall j\in Z\ x_j=y_i\}$. By the inductive hypothesis and Lemma \ref{ball lemma} we are done. \end{proof}

\begin{proof}[Proof of Theorem \ref{Compression of polynomial conditions to a single condition}]
There are at most $(d+1)^k$ degree profiles that the polynomials can have, so by Lemma \ref{union lemma} we may assume that they all have the same degree profile. We may also throw away all conditions $(\phi, E)$ where $E = \F_p^k$. As promised in the introduction to this section, we now prove the result by induction on the degree profile, with respect to the colex order.

Let $\e>0$ and let $\cc$ be a set of $k$-dimensional polynomial conditions $(\phi,E)$ such that each polynomial $\phi$ has degree profile $(k_0,k_1,\dots,k_d)$. If the satisfying set of $\cc$ in $S^n$ has density less than $\e$, then we can approximate it with a single condition of the form $(\phi,\emptyset)$. Otherwise, Corollary \ref{bias implies low rank} gives us $R$ and $\tau$ such that for any $r$ polynomials $\phi_1,\dots,\phi_r$ involved in the conditions in $\cc$, if $r>R$ then there exist $a_1^*,\dots,a_r^*$ such that $\sum_{i=1}^ra_i^*\phi_i$ has rank at most $\tau$. (By this we mean it has degree $d'$ for some $d'\leq d$ and $(d',S)$-rank at most $\tau$.)

Let $\phi_1,\dots,\phi_r$ be a maximal set of polynomials coming from the conditions in $\cc$ for which that is \emph{not} the case, and suppose that $\phi_i:\F_p^n\to V_i$. Then for every $(\phi,E)\in\cc$ with $\phi:\F_p^n\to V$ there exist $a_1^*,\dots,a_r^*$ and $b^*$ with $a_i^*\in V_i^*$ and $b^*\in V^*\setminus\{0\}$ such that $\sum_{i=1}^ra_i^*\phi_i+b^*\phi$ has rank at most $\tau$. 

After a further application of Lemma \ref{union lemma}, we may assume that $(a_1^*,\dots,a_r^*,b^*)$ is the same for each $(\phi,E)\in\cc$. Let $\psi$ be the polynomial $\sum_{i=1}^ra_i^*\phi_i$, so $\psi+b^*\phi$ has rank at most $\tau$ for each $(\phi,E)\in\cc$.

Let us now fix $(\phi,E)\in\cc$. On each set $\{x\in S^n:\psi(x)=y\}$, the polynomials $b^*\phi$ and $\psi+b^*\phi-y$ agree, so $b^*\phi$ agrees with a polynomial of rank at most $\tau$. Let $V^*=V_0^*\oplus\dots\oplus V_d^*$ be a decomposition as described at the beginning of the section, so $\dim V_i^*=k_i$ and $w^*\phi$ has degree exactly $i$ for every $w^*\in V_i^*$. Let $b^*=b_0^*+b_1^*+\dots+b_d^*$ with $b_i^*\in V_i^*$.

Then if $b^*\phi$ has degree $d'$, it follows that $b_{d'}^*\ne 0$ and $b_i^*=0$ for every $i>d'$. Now pick $w\in V_{d'}$ such that $b_{d'}^*(w)=1$. Then we can write 
\[\phi(x)=b_{d'}^*\phi(x)w+(\phi(x)-b_{d'}^*\phi(x)w)\]
and the image of the polynomial map $\phi(x)-b_{d'}^*\phi(x)w$ lies in the kernel of $b_{d'}^*$.

Since $b^*\phi$ agrees with a polynomial of degree $d'$ and rank at most $\tau$, and since $(b_0^*+\dots+b_{d'-1}^*)\phi$ has degree less than $d'$, $b_{d'}^*\phi$ has rank at most $\tau+1$. We can therefore write it in the form $F(\g_1,\dots,\g_t)$ with $t\leq \tau+1$ and with $\g_1,\dots,\g_t$ all of degree less than $d'$. This allows us to replace the condition $(\phi,E)$ by a condition with a degree profile that comes earlier than $(k_0,k_1,\dots,k_d)$ and that agrees with $\phi$ on $\{x\in S^n:\psi(x)=y\}$. Let $\tilde\phi$ be the polynomial $\phi-(b_{d'}^*\phi)w$, regarded as taking values in $\ker(b_{d'}^*)$, and note that $\tilde\phi$ has degree profile $(k_0,k_1,\dots,k_{d'-1},k_{d'}-1,k_{d'+1},\dots,k_d)$. Let $\iota:\ker(b_{d'}^*)\to V$ be the inclusion map, and let $\theta:\ker(b_{d'}^*)\times\F_p^t\to V$ be the map $(u,a)\mapsto \iota u+aw$. Now let us take the polynomial $(\tilde\phi,\g_1,\dots,\g_t)$ and let $H\subset\ker(b_{d'}^*)\times\F_p^t$ be the set $\{(v,u):\theta(v,F(u))\in E\}$. 

Then $(\tilde\phi,\g_1,\dots,\g_t)(x)\in H$ if and only if $\iota\tilde\phi(x)+F(\g_1(x),\dots,\g_t(x))\in E$, which is true if and only if $\phi(x)-b_{d'}^*\phi(x)w+F(\g_1(x),\dots,\g_t(x))w\in E$, which is true if and only if $\phi(x)\in E$. Thus, the condition $(\phi,E)$ is equivalent to the condition $((\tilde\phi,\g_1,\dots,\g_t),H)$. But $\tilde\phi$ has degree profile $(k_0,k_1,\dots,k_{d'-1},k_{d'}-1,k_{d'+1},\dots,k_d)$, as already noted, and since each $\g_i$ has degree less than $d'$, the degree profile of $(\tilde\phi,\g_1,\dots,\g_t)$ precedes that of $\phi$ in colex order. Furthermore, since $t$ is bounded independently of $n$, all the degrees in the degree profile of $(\tilde\phi,\g_1,\dots,\g_t)$ are bounded independently of $n$. Hence we are done by the inductive hypothesis and Lemma \ref{ball lemma}, once we have established the base case.

Recall that for polynomials of degree 1, we define the rank to be the size of the support of the linearization. So the inductive argument just given allows us to reduce to the case where all the polynomials are linear and have bounded support size. This case is dealt with by Lemma \ref{bounded support linear case}.
\end{proof}

\section{Conclusion and open questions}\label{Section: Conclusion and open questions}

We finish by mentioning two questions to which we do not know the answers. One is the obvious one of improving the bounds in our approximation results. Even in the most basic approximation result, Proposition \ref{Compression in the basic case}, the question of the optimal bounds is not yet settled.

\begin{question}
Can we replace the bound $(p \log \epsilon^{-1})^p$ by $O_p(\log \epsilon^{-1})$ in Proposition \ref{Compression in the basic case}?
\end{question}

If this is indeed possible, then we may ask whether we can also take bounds $O(\log \epsilon^{-1})$ in Theorem \ref{main approximation theorem}, Theorem \ref{Compression in the groups case for arbitrary alphabets}, and Theorem \ref{Compression of polynomial conditions to a single condition}. In the case of Theorem \ref{Compression of polynomial conditions to a single condition} it would probably help to first have linear bounds $H_{p,d,S}(\epsilon) = O_{p,d,S}(\log \epsilon^{-1})$ in Theorem \ref{Equidistribution of polynomials} on the equidistribution of polynomials.

In a second direction, our first approximation result for linear forms $\F_p^n \rightarrow \F_p$, Proposition \ref{main approximation theorem}, involves internal approximation. We can also ask for an external approximation, where it also does not seem obviously unreasonable to require the conditions to belong to the original family of conditions. (In the case of internal approximations, it is not possible to add this additional requirement, as it would imply that the set of $x \in S^n$ satisfying the approximating set of conditions is exactly the same as that satisfying the original set of conditions, which is not always possible, as can be seen with an example we gave earlier, where we take $S$ to be $\{0,1\}$ and the family of linear forms to be $\{(x_1 + x_i, \{0,1\}): i \in \lbrack 2, n \rbrack\}$.)

\begin{conjecture} Let $p$ be a prime, let $S$ be a non-empty subset of $\F_p$, and let $\epsilon > 0$. There there exists a constant $A=A(p,S,\e)$ such that for every set $\mathcal F$ of mod-$p$ conditions on $S^n$ there is a subset $\mathcal G$ of $\mathcal F$ such that the satisfying set of $\mathcal G$ $\e$-externally approximates that of $\mathcal F$ (meaning that the density of points that satisfy $\mathcal G$ but not $\mathcal F$ is at most $\e$). 
\end{conjecture}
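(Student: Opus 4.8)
The plan is to re-run the proof of Theorem \ref{main approximation theorem} essentially unchanged, observing at each structural reduction that the low-complexity condition it produces can be replaced by a boundedly large \emph{subfamily} of the conditions already present. Write $\mathrm{Sat}(\mathcal G)$ for the satisfying set of a family $\mathcal G$ of mod-$p$ conditions on $S^n$. If $\mathcal G\subset\mathcal F$ then $\mathrm{Sat}(\mathcal F)\subset\mathrm{Sat}(\mathcal G)$ automatically, so an external $\e$-approximation of $\mathcal F$ is exactly a bounded subset $\mathcal G\subset\mathcal F$ with $\mathrm{Sat}(\mathcal G)\setminus\mathrm{Sat}(\mathcal F)$ of density at most $\e$ in $S^n$. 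The union principle of Remark \ref{union remark} carries over verbatim: if $\mathcal F=\mathcal F_1\cup\dots\cup\mathcal F_q$ and $\mathcal G_i\subset\mathcal F_i$ externally $\e$-approximates $\mathcal F_i$ for each $i$, then $\mathcal G_1\cup\dots\cup\mathcal G_q\subset\mathcal F$, and since $\mathrm{Sat}(\mathcal G_1\cup\dots\cup\mathcal G_q)\setminus\mathrm{Sat}(\mathcal F)\subset\bigcup_i(\mathrm{Sat}(\mathcal G_i)\setminus\mathrm{Sat}(\mathcal F_i))$, it externally $q\e$-approximates $\mathcal F$.

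The only genuinely new ingredient is an external version of Proposition \ref{Compression of symmetric sunflowers}, and it comes out of the same computation. Suppose $\{\phi_0+\xi_i:i\in I\}$ is a family in which the $\xi_i$ form a symmetric family, let $E\subset\F_p$, and let $I'\subset I$ be a set on which the $\xi_i$ have pairwise disjoint supports. I claim that $\{(\phi_0+\xi_i,E):i\in I'\}$ externally $p(1-2^{-(p-1)})^{|I'|}$-approximates $\{(\phi_0+\xi_i,E):i\in I\}$. Let $V$ be the image of $\xi_i$ on $S^n$ (the same for all $i$ by symmetry), let $H=\{t\in\F_p:V+t\subset E\}$, and let $x$ lie in the difference of the two satisfying sets. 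If $\phi_0(x)\in H$ then $\phi_0(x)+\xi_i(x)\in E$ for \emph{every} $i\in I$, contradicting $x\notin\mathrm{Sat}(\{(\phi_0+\xi_i,E):i\in I\})$; hence $\phi_0(x)\notin H$. For each $t\notin H$, the probability that $\phi_0(x)=t$ and $\phi_0(x)+\xi_i(x)\in E$ for all $i\in I'$ is at most $\prod_{i\in I'}\P[\xi_i(x)\in E-t]$ by disjointness of supports, and since $V+t\not\subset E$ each factor is at most $1-2^{-(p-1)}$ by Proposition \ref{Lower bound on non-zero probabilities}, exactly as in the proof of Proposition \ref{Compression of symmetric sunflowers}. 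Summing over the at most $p$ bad values of $t$ proves the claim; the point is that it bounds the difference with $\mathrm{Sat}$ of the \emph{whole} family rather than merely with $\mathrm{Sat}((\phi_0,H))$.

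From here one mimics the proof of Proposition \ref{Compression of symmetric balls}, by induction on the radius $r$. With $M\subset I$ a maximal set on which the $\xi_i$ are disjointly supported: if $|M|\ge 2^{p-1}\log(p\e^{-1})$, one keeps any subfamily $\{(\phi_0+\xi_i,E):i\in I'\}$ with $I'\subset M$ and $|I'|\ge 2^{p-1}\log(p\e^{-1})$, and invokes the previous paragraph; if $|M|$ is smaller, then $Z=\bigcup_{i\in M}Z(\xi_i)$ is bounded, and one partitions $I=\bigcup_{j\in Z,\,y\neq 0}I_{j,y}$ as there, noting that the coefficient-shifting step does not alter the conditions but merely re-presents $\{(\phi_0+\xi_i,E):i\in I_{j,y}\}$ as a symmetric family of radius $r-1$, to which the inductive hypothesis and the union principle apply. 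Finally one mimics the proof of Theorem \ref{main approximation theorem} itself: in Case 1 the bounded $r$-separated subfamily $\{(\phi_i,E_i):i\in I'\}$ already has satisfying set of density at most $\e$, so, containing $\mathrm{Sat}(\mathcal F)$, it externally $\e$-approximates $\mathcal F$; in Case 2 one partitions into the families $I_{E,a,D}$, each a symmetric family of bounded radius, and applies the external form of Proposition \ref{Compression of symmetric balls} together with the union principle.

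The work is mostly bookkeeping rather than a serious obstacle, but two points need care. First, at every sunflower step the retained indices must be kept inside a disjointly supported set, since it is precisely the independence of the events $\xi_i(x)\in E-t$ that yields the exponential saving in $|I'|$; retaining a subfamily that is not disjointly supported would not suffice. Second, one must check that the hypothesis ``the $\xi_i$ form a symmetric family'' — equivalently, a common coefficient distribution, and hence a common image $V$ on $S^n$ — survives the radius-reduction step, which it does because that step subtracts the same coefficient $y$ from every $\xi_i$ with $i\in I_{j,y}$. The resulting constant $A(p,S,\e)$ should have the same quasipolynomial dependence on $\e^{-1}$ as the bound in Theorem \ref{main approximation theorem}, with the dependence on $|S|$ entering only through Proposition \ref{Lower bound on non-zero probabilities}.
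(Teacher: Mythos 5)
You are attempting a statement that the paper does not prove: it is the concluding open conjecture, so there is no argument of record to compare yours with, and the proposal has to be judged entirely on its own merits. Having gone through it step by step, I do not see a gap. The crux is your external sunflower claim, and it is correct for exactly the reason you give: with $H=\{t\in\F_p: V+t\subset E\}$ one has the sandwich
\[
\{x: \phi_0(x)\in H\}\ \subset\ \{x:\forall i\in I,\ \phi_0(x)+\xi_i(x)\in E\}\ \subset\ \{x:\forall i\in I',\ \phi_0(x)+\xi_i(x)\in E\},
\]
so the difference you must control lies in the satisfying set of the $I'$-conditions minus the satisfying set of $(\phi_0,H)$, and that is bounded by the same computation as in Proposition \ref{Compression of symmetric sunflowers}, with the product taken only over the retained, disjointly supported indices and each factor controlled by Proposition \ref{Lower bound on non-zero probabilities}. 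Equally important, and you identify this correctly, is that the paper's two reduction steps (the partition into classes $I_{E,a,D}$ in Case 2 of Theorem \ref{main approximation theorem}, and the coefficient-shifting, radius-reducing step inside Proposition \ref{Compression of symmetric balls}) never replace a condition by a new one --- they only regroup or re-present the given conditions --- so the inductive output can always be drawn from the original family; your external version of Remark \ref{union remark} is also verified correctly, and it works for covers as well as partitions, which is what the families $I_{j,y}$ and $I_{E,a,D}$ actually are. Case 1 of the main theorem externalizes trivially, as you say, since the retained $r$-separated subfamily (trimmed to size at most $2p\log(2/\e)$) already has satisfying set of density at most $\e$.

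Since a correct version of this argument would settle the authors' conjecture, it needs to be written out in full rather than by analogy, and three points deserve explicit treatment. First, state the external analogue of Proposition \ref{Compression of symmetric balls} as a standalone inductive statement (petal-symmetric family of radius $r$, a single set $E$, output a subfamily of size at most $A^{\mathrm{ext}}(p,r,\e)$), and make clear that the symmetry hypothesis is on the petals $\phi_i$, i.e.\ a common coefficient distribution and hence a common image $V$ on $S^n$: it is the symmetry of \emph{all} petals in the class, not merely of the retained disjointly supported ones, that turns your estimate into an error bound against the whole class; your observation that petal-symmetry survives the shift by $y$ in coordinate $j$ is the right thing to check there. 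Second, track the error budgets through the recursion: at a node with budget $\e'$ the retained sunflower subfamily must have size about $2^{p-1}\log(p/\e')$, and the number of classes at each level is bounded in terms of $p$, $r$ and $\e$ only, so the final constant is indeed of the same quasipolynomial shape as in Theorem \ref{main approximation theorem}. Third, dispose of the degenerate cases ($|S|=1$, conditions with $E_i=\F_p$ or $E_i=\emptyset$, empty $\mathcal F$) so that the requirement $\mathcal G\subset\mathcal F$ is never violated when trivial conditions are discarded. With those details supplied I believe the argument is sound, and you should present it as a resolution of the conjecture rather than as a corollary of a result in the paper.
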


\end{document}